\documentclass[a4paper]{amsart}
\usepackage{a4wide}
\usepackage[11pt]{moresize}

\usepackage[hyphens]{url} \urlstyle{same}
\usepackage[dvips]{graphicx} 
\usepackage[new]{old-arrows}

\usepackage{amsthm, amsmath, amssymb, amsfonts} 
\usepackage{mathtools, mathabx, epsfig, mathrsfs}
\usepackage{physics}
\usepackage{import}

\usepackage{xcolor}
\usepackage{xspace}
\usepackage{stackrel}
\usepackage{makecell}
\usepackage{enumerate}
\usepackage{enumitem}
\usepackage{footnote}
\usepackage[textsize=tiny]{todonotes}

\usepackage{hyperref}
\hypersetup{
   colorlinks,
    linkcolor={red!70!black},
    citecolor={blue!50!black},
    urlcolor={blue!80!black}
}
\usepackage{url}

\usepackage{tikz,tqft}
\usetikzlibrary{matrix,arrows,cd,decorations.markings,calc,patterns,angles,quotes,backgrounds}
\tikzset{>=latex}



\theoremstyle{plain}
 \newtheorem{thm}{Theorem}[section]
 \newtheorem{thmx}{Theorem}
 
 \newtheorem{prop}[thm]{Proposition}
 \newtheorem{lem}[thm]{Lemma}
 \newtheorem{cor}[thm]{Corollary}
\theoremstyle{definition}
 
 \newtheorem{dfn}[thm]{Definition}
 \newtheorem{conv}[thm]{Convention}
 
\theoremstyle{remark}
 \newtheorem{rem}[thm]{Remark}
 \numberwithin{equation}{section}
 
 
\def\acts{\mathrel{\reflectbox{$\righttoleftarrow$}}}

\newcommand{\R}{\mathbb{R}}
\newcommand{\C}{\mathbb{C}} 	
\newcommand{\CP}{\mathbb{CP}}
\newcommand{\Z}{\mathbb{Z}}
\newcommand{\T}{\mathbb{T}}
  
\newcommand{\Hyp}{\mathbb{H}}

\renewcommand{\restriction}{\mathord{\upharpoonright}}

\DeclareMathOperator{\parab}{par}
\DeclareMathOperator{\hyp}{hyp}  
\DeclareMathOperator{\rot}{rot}
\DeclareMathOperator{\vol}{vol}

\DeclareMathOperator{\Ker}{Ker}

\DeclareMathOperator{\Rep}{Rep}

\DeclareMathOperator{\Ad}{Ad}

\DeclareMathOperator{\Out}{Out}
\DeclareMathOperator{\Aut}{Aut}
\DeclareMathOperator{\Inn}{Inn}
\DeclareMathOperator{\fix}{fix}

\DeclareMathOperator{\SU}{SU}
\DeclareMathOperator{\Sp}{Sp}

   
\DeclareMathOperator{\psl}{PSL(2,\R)}
\DeclareMathOperator{\Mod}{Mod}
\DeclareMathOperator{\ChTri}{ChTri}
\newcommand{\ChainTriang}{\ChTri_\alpha}
\newcommand{\g}{\mathfrak{g}}
\newcommand{\G}{\mathcal{G}}
\newcommand{\Cgot}{\mathfrak{C}}
\newcommand{\FS}{\mathcal{FS}}

\newcommand{\charvar}{\Rep (\Sigma_n,G)}
\newcommand{\relcharvar}{\Rep_\alpha (\Sigma_n,G)}
\newcommand{\relcharvarthree}{\Rep_\alpha (\Sigma_3,G)}
\newcommand{\dtrelcharvar}{\Rep^{\textnormal{\tiny{DT}}}_\alpha(\Sigma_n,G)\xspace}
\newcommand{\regdtrelcharvar}{\mathring{\Rep}^{\textnormal{\tiny{DT}}}_\alpha(\Sigma_n,G)\xspace}

\newcommand{\E}{\mathcal{E}}


\renewcommand{\setminus}{\smallsetminus}

\title[Action-angle coordinates in genus zero]{Action-angle coordinates for surface group representations in genus zero}



\author[Arnaud Maret]{Arnaud Maret} 


\address{
Mathematisches Institut \\ 
Ruprecht-Karls-Universit\"at Heidelberg   \\ 
Germany}
\email{amaret@mathi.uni-heidelberg.de}

\begin{document}


\begin{abstract}
We study a compact family of totally elliptic representations of the fundamental group of a punctured sphere into $\psl$ discovered by Deroin and Tholozan and named after them. We describe a polygonal model that parametrizes the relative character variety of Deroin--Tholozan representations in terms of chains of triangles in the hyperbolic plane. We extract action-angle coordinates from our polygonal model as geometric quantities associated to chains of triangles. The coordinates give an explicit isomorphism between the space of representations and the complex projective space. We prove that they are almost global Darboux coordinates for the Goldman symplectic form.
\end{abstract}

\maketitle

\section{Introduction} 
A character variety is, broadly speaking, a symplectic manifold associated to a closed oriented surface $\Sigma$ and a quadrable\footnote{We call a Lie group \emph{quadrable} if its Lie algebra can be equipped with a non-degenerate, symmetric, $\Ad$-invariant bilinear form, see \cite[§1.1]{Ma22} for more details.} Lie group $G$. It is defined as the space of conjugacy classes of representations of the fundamental group of $\Sigma$ into $G$. A relative character variety is the counterpart for punctured surfaces. Only representations that map the peripheral curves around each puncture inside a prescribed conjugacy class of $G$ are considered, see Section ~\ref{sec:preliminaries} for more details. To study the topology of a character variety, one approach consists in finding a specific set of coordinates that parametrize (most of) it. A foundational example is the parametrization of Teichmüller space via Fenchel-Nielsen coordinates. 

In this paper we study the geometry of compact components of relative character varieties of representations of the fundamental group of an $n$ times punctured sphere into $\psl$, discovered by Deroin-Tholozan in 
\cite{DeTh19}. We refer to these compact components as \emph{Deroin-Tholozan relative character varieties} and to the representations themselves as \emph{Deroin-Tholozan representations}. There exists such a component for every choice of $\alpha=(\alpha_1,\ldots,\alpha_n)\in (0,2\pi)^n$ with the property that $\alpha_1+\ldots+\alpha_n>2\pi(n-1)$, see Section ~\ref{sec:preliminaries} for more details. They have the fundamental property of mapping any simple closed curve to an elliptic element of $\psl$, see Proposition ~\ref{prop:totally-elliptic}. We say they are \emph{totally elliptic}.

\subsection{The results}\label{sec:results} Let $\Sigma_n$ denote a connected and oriented surface of genus zero with $n\geq 3$ labelled punctures. We fix a pants decomposition $\mathcal P$ of $\Sigma_n$ and a Deroin-Tholozan representation $\phi\colon\pi_1(\Sigma_n)\to\psl$. To each of the $n-2$ pairs of pants $P_0,\ldots,P_{n-3}$ in $\mathcal P$, we associate a geodesic triangle $\Delta_i$ in the upper half-plane whose vertices are the unique fixed points of the images of the three boundary curves of $P_i$ (we use that $\phi$ is totally elliptic). Two vertices are joined by a geodesic segment if they correspond to two boundary curves of the same pair of pants. This produces a chain of $n-2$ geodesic triangles $\Delta_0,\ldots,\Delta_{n-3}$ in the upper half-plane, see Figure ~\ref{fig:introduction}.

\begin{figure}[h!]
\begin{center}
\resizebox{.8\textwidth}{!}{%
\begin{tikzpicture}[framed]
  \draw (0,-.5) arc(-90:-270: .25 and .5);
  \draw[dashed] (0,.5) arc(90:-90: .25 and .5);
  \draw[violet] (2,-.5) arc(-90:-270: .25 and .5);
  \draw[violet, dashed] (2,.5) arc(90:-90: .25 and .5);
  \draw[violet] (4,-.5) arc(-90:-270: .25 and .5);
  \draw[violet, dashed] (4,.5) arc(90:-90: .25 and .5);
  \draw[violet] (6,-.5) arc(-90:-270: .25 and .5);
  \draw[violet, dashed] (6,.5) arc(90:-90: .25 and .5);
  \draw  (8,0) ellipse (.25 and .5);
  
  \draw (1,1) ellipse (.5 and .25);
  \draw (3,1) ellipse (.5 and .25);
  \draw (5,1) ellipse (.5 and .25);
  \draw (7,1) ellipse (.5 and .25);
   
  \draw (0,.5) to[out=0,in=-90] (.5,1);
  \draw (1.5,1) to[out=-90,in=180] (2,.5);
  \draw (0,-.5) to[out=0,in=180] (2,-.5);
  
  \draw (2,.5) to[out=0,in=-90] (2.5,1);
  \draw (3.5,1) to[out=-90,in=180] (4,.5);
  \draw (2,-.5) to[out=0,in=180] (4,-.5);
  
  \draw (4,.5) to[out=0,in=-90] (4.5,1);
  \draw (5.5,1) to[out=-90,in=180] (6,.5);
  \draw (4,-.5) to[out=0,in=180] (6,-.5);
  
  \draw (6,.5) to[out=0,in=-90] (6.5,1);
  \draw (7.5,1) to[out=-90,in=180] (8,.5);
  \draw (6,-.5) to[out=0,in=180] (8,-.5);
  
  \draw (1,.1) node{\large $P_0$};
  \draw (3,.1) node{\large $P_1$};
  \draw (5,.1) node{\large $P_2$};
  \draw (7,.1) node{\large $P_3$};
\end{tikzpicture}
}
\resizebox{.8\textwidth}{!}{%
\begin{tikzpicture}[framed,font=\sffamily,decoration={
    markings,
    mark=at position 1 with {\arrow{>}}}]]
    
\node[anchor=south west,inner sep=0] at (0,0) {\includegraphics[width=\textwidth]{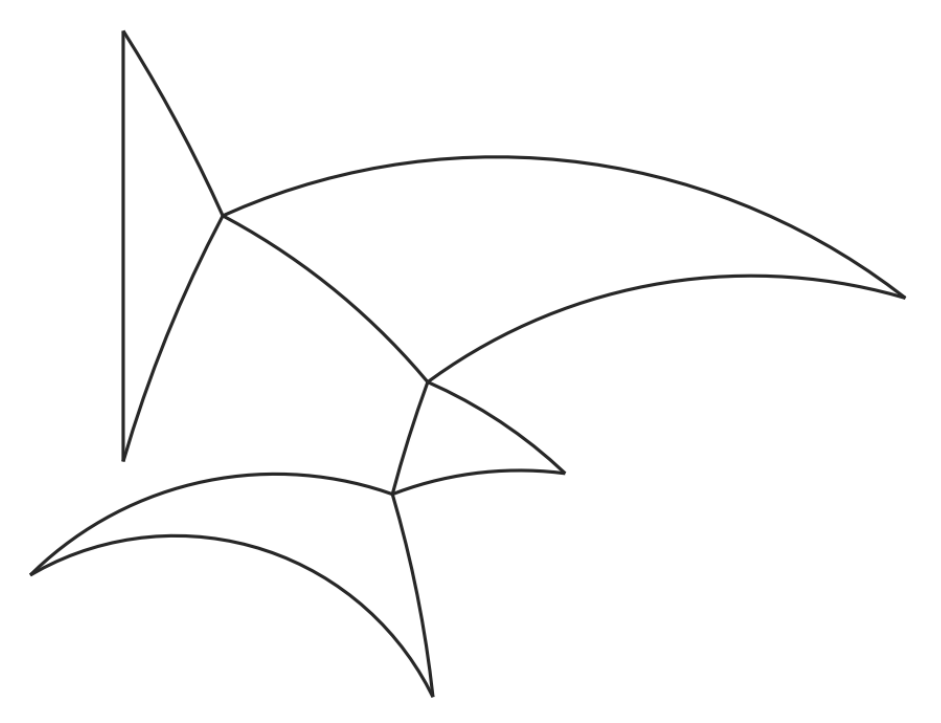}};

\draw[violet] (3.55,8.05) node{\huge $\bullet$};

\draw[thick, postaction={decorate}, red!40!yellow] (5,8.7) arc (10:112:1.7) node[near end, above right]{\huge $\gamma_1$};

\draw[violet] (6.8,5.4) node{\huge $\bullet$};

\draw[thick, postaction={decorate}, red!40!yellow] (8,4.9) arc (-20:35:1.3) node[near start, above right]{\huge $\gamma_2$};

\draw[violet] (6.25,3.65) node{\huge $\bullet$};

\draw[thick, postaction={decorate}, red!40!yellow] (6.7,2) arc (-80:20:1.5) node[near end, below right]{\huge $\gamma_3$};

\draw (2.7,8.15) node{\huge $\Delta_0$};
\draw (7.2,7.4) node{\huge $\Delta_1$};
\draw (7.3,4.6) node{\huge $\Delta_2$};
\draw (5.5,3) node{\huge $\Delta_3$};
\end{tikzpicture}
}
\caption{On top: a pants decomposition of a sphere with six punctures into four pairs of pants. This illustration is modelled on 
\cite[Fig.\ ~2]{DeTh19}. On bottom: a corresponding chain of geodesic triangles in the upper half-plane. The angles between consecutive triangles in the chain are denoted by $\gamma_i$.}\label{fig:introduction}
\end{center}
\end{figure}

We explain in Section ~\ref{sec:polygonal-model} that rotating around the common vertices of two consecutive triangles in the chain defines a maximal Hamiltonian torus action on each Deroin-Tholozan relative character variety. We denote by $a_i$ the double of the area of the triangle $\Delta_i$ and by $\gamma_i$ the angle between the triangles $\Delta_{i-1}$ and $\Delta_i$ as on Figure ~\ref{fig:introduction} (the parameters $\gamma_i$ are defined formally in Section ~\ref{sec:complex-proj-coordinates}).

\begin{thmx}\label{thm:action-angle-coordinates}
If we set $\sigma_i\coloneqq\gamma_1+\ldots+\gamma_i$, then
\[
\big\{a_1,\ldots,a_{n-3},\sigma_1,\ldots,\sigma_{n-3}\big\}
\]
are action-angle coordinates for each Deroin-Tholozan relative character variety.
\end{thmx}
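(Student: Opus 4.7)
Proving that $(a_i, \sigma_i)$ are action-angle coordinates reduces to checking that the Goldman symplectic form reads $\omega = \sum_{i=1}^{n-3} da_i \wedge d\sigma_i$, i.e.\ that the canonical Poisson brackets
\[
\{a_i, a_j\} = 0, \qquad \{a_i, \sigma_j\} = \delta_{ij}, \qquad \{\sigma_i, \sigma_j\} = 0
\]
all hold, together with the $2\pi$-periodicity of each Hamiltonian flow $\exp(tX_{a_i})$. My plan is to establish these three families in order, using the polygonal model developed above.

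\emph{Step 1 (commutation of actions).} The polygonal model presents $\dtrelcharvar$ as a symplectic toric manifold whose maximal Hamiltonian torus action rotates the subchain $\Delta_j, \Delta_{j+1}, \ldots$ around the shared vertex $p_j$, with moment-map components the rotation angles $\theta_1, \ldots, \theta_{n-3}$ of the images of the interior pants curves. By Gauss-Bonnet, each $a_i$ is an explicit integer-affine combination of the $\theta_j$'s and of the fixed puncture rotations $\alpha_k$; since the $\theta_j$'s Poisson-commute, so do the $a_i$'s. Verifying that the integer matrix of the change of coordinates is unimodular also ensures that the flows of the $a_i$'s generate a genuine $2\pi$-periodic $T^{n-3}$-action, so that the $\sigma_i$'s are well-defined angles modulo $2\pi$.

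\emph{Step 2 (pairing between actions and angles).} I would compute the Hamiltonian flow generated by each $a_i$ in the polygonal model. By Goldman's theorem, the flow of $\theta_j$ is the generalised Dehn twist along the $j$-th interior curve, which rigidly rotates $\Delta_j, \Delta_{j+1}, \ldots$ around $p_j$ and changes only the bending angle $\gamma_j$. Combining these elementary twists according to the Gauss-Bonnet expansion of $a_i$, and keeping careful track of orientation signs, the flow of $a_i$ should change $\gamma_i$ by $+t$ and $\gamma_{i+1}$ by $-t$ (with the convention $\gamma_{n-2} := 0$) while fixing every other $\gamma_k$. Telescoping inside $\sigma_j = \gamma_1 + \cdots + \gamma_j$ then produces $\sigma_j \mapsto \sigma_j + t\,\delta_{ij}$, which is exactly $\{a_i, \sigma_j\} = \delta_{ij}$.

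\emph{Step 3 (commutation of angles).} The vanishing $\{\sigma_i, \sigma_j\} = 0$ will be the main obstacle: unlike the $a_i$'s, the $\sigma_i$'s are not manifestly trace-type functions of the representation, so Goldman's intersection formula does not apply directly. My preferred approach is a formal argument bootstrapped from Steps 1 and 2: the Jacobi identity together with $\{a_k, \sigma_i\} = \delta_{ki}$ immediately gives $\{a_k, \{\sigma_i, \sigma_j\}\} = 0$ for every $k$, so $\{\sigma_i, \sigma_j\}$ must be a function of the actions $a_1, \ldots, a_{n-3}$ alone. It then suffices to evaluate this function on a single transverse section of the Liouville fibration: at a symmetric or otherwise extremal configuration of the chain, direct geometric inspection in the polygonal model should show that the bracket vanishes, hence that it vanishes identically. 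Equivalently, this amounts to showing that the locus $\{\sigma_1 = \cdots = \sigma_{n-3} = 0\}$ is a Lagrangian section of the moment map. A fallback, should the formal argument not pin the value down, is to realise each $\gamma_i$ as a geometric quantity attached to an auxiliary oriented arc and compute $\{\gamma_i, \gamma_j\}$ by Goldman's intersection formula, then check that the partial sums cancel pairwise.
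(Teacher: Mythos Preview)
Your Steps 1 and 2 are essentially correct and amount to what the paper proves as equivariance of the map $\Cgot$ (Proposition~\ref{prop:equivariance}): the $a_i$ are integer-affine in the twist Hamiltonians $\beta_j$, so they Poisson-commute, and the twist flow description in the polygonal model gives $\{a_i,\sigma_j\}=c\,\delta_{ij}$. One caveat: the constant is not $1$. The paper obtains $\omega_\G=\tfrac{1}{2}\sum da_i\wedge d\sigma_i$ (Corollary~\ref{cor:analogue-Wolpert-formula}), so your normalisation is off by a factor of two; this is harmless but worth tracking.

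The genuine gap is Step 3. Your Jacobi argument correctly shows that $\{\sigma_i,\sigma_j\}$ is torus-invariant, hence a function of the actions alone. But that does \emph{not} reduce the problem to a single point: it reduces it to a single \emph{section} of the moment map, and the value on that section can be any antisymmetric function of $a$. Concretely, if $\psi_i$ are Arnold--Liouville angles with $\omega_\G=\tfrac12\sum da_i\wedge d\psi_i$, then Steps 1--2 only give $\sigma_i=\psi_i+g_i(a)$ for some functions $g_i$, and $\{\sigma_i,\sigma_j\}=\partial_jg_i-\partial_ig_j$, which need not vanish. None of your three proposed closures pins this down: computing $\{\sigma_i,\sigma_j\}$ at a point requires the Hamiltonian vector field $X_{\sigma_i}$, which needs $\omega_\G$ and is circular; showing the section $\{\sigma_i=0\}$ is Lagrangian means computing $\omega_\G$ on explicit tangent vectors to that section, which sends you back to the cup-product definition of the Goldman form rather than ``direct geometric inspection''; and the $\gamma_i$ are angles between geodesic rays, not invariant functions of closed curves, so Goldman's product formula does not apply to them.

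The paper avoids this issue by taking a different route: it builds the explicit map $\Cgot$ to $\CP^{n-3}$, proves (with considerable work on differentiability at degenerate chains) that $\Cgot$ is a diffeomorphism, and then compares $\omega_\G$ with the pullback of the Fubini--Study form. On $\CP^{n-3}$ one already knows that $\omega_\FS=\sum d\nu_i\wedge d\theta_i$ in action-angle form, so the missing bracket identity is supplied by the target rather than computed at the source. If you want to salvage a direct argument, one viable strategy is to produce an anti-symplectic involution of $\dtrelcharvar$ (e.g.\ induced by an orientation-reversing isometry of $\Hyp$) whose fixed locus is a section $\{\sigma_i=\text{const}\}$; that section is then automatically Lagrangian and forces $\{\sigma_i,\sigma_j\}=0$. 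But this needs to be checked, and is not what you outlined.
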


Theorem ~\ref{thm:action-angle-coordinates} says that the dynamical system induced by rotating the triangles around their common vertices is integrable with canonical coordinates $\big\{a_1,\ldots,a_{n-3},\sigma_1,\ldots,\sigma_{n-3}\big\}$. The notion of \emph{action-angle coordinates} refers to the canonical coordinates of an integrable system in the sense of the Arnold-Liouville Theorem, see e.g.\ 
\cite{Can01}. In fact, the dynamics corresponds to the maximal Hamiltonian torus action on each Deroin-Tholozan relative character variety defined in \cite{DeTh19} by considering the twist flows \emph{à la Goldman} along the separating curves defining the pants decomposition of $\Sigma_n$. This action equips each Deroin-Tholozan relative character variety with the structure of a \emph{symplectic toric manifold}. We deduce Theorem ~\ref{thm:action-angle-coordinates} from

\begin{thmx}[see Theorem ~\ref{thm:symplectomorphism}]\label{thm:symplectomorphism-intro}
The map from each Deroin-Tholozan relative character variety to $\CP^{n-3}$ defined in homogeneous coordinates by 
\[
\left[ \sqrt{a_0}:\sqrt{a_1}e^{i\sigma_1}:\ldots:\sqrt{a_{n-3}}e^{i\sigma_{n-3}}\right]
\]
is an isomorphism of symplectic toric manifolds.
\end{thmx}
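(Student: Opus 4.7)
The plan is to invoke Delzant's classification theorem for compact symplectic toric manifolds. By \cite{DeTh19}, $\dtrelcharvar$ is a compact symplectic toric manifold of real dimension $2(n-3)$, with the $T^{n-3}$-action generated by the twist flows along the separating curves $\gamma_1, \ldots, \gamma_{n-3}$ of the pants decomposition. On the other side, $\CP^{n-3}$ equipped with an appropriately normalized Fubini--Study form and the standard diagonal rotation action is a symplectic toric manifold. Delzant's theorem asserts that two compact symplectic toric manifolds with the same moment polytope are equivariantly symplectomorphic, so it suffices to match the moment polytopes and then to identify the resulting symplectomorphism with the explicit formula in the statement.

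The central technical step is to identify the moment map of $\dtrelcharvar$ with $(a_1, \ldots, a_{n-3})$. By Goldman's formula, the Hamiltonian generating the twist flow along a simple closed curve is a function of the trace of the holonomy along that curve. Since $\rho(\gamma_i)$ is elliptic, its trace is a function of its rotation angle, which by construction of the chain equals twice the vertex angle of $\Delta_i$ (or equivalently $\Delta_{i-1}$) at their common fixed point. Applying hyperbolic Gauss--Bonnet to $\Delta_i$, whose three angles are governed by $\rho(\gamma_{i-1})$, $\rho(\gamma_i)$, and the prescribed boundary holonomy from $\alpha$, expresses $a_i = 2\,\mathrm{area}(\Delta_i)$ as an affine function of the twist Hamiltonians. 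Choosing the correct affine normalization makes $a_i$ the $i$-th component of the moment map, and the moment polytope becomes the simplex
\[
\bigl\{(a_1,\ldots, a_{n-3}) : a_i \geq 0,\; a_1 + \ldots + a_{n-3} \leq C\bigr\},
\]
where $C = a_0 + a_1 + \ldots + a_{n-3}$ is determined by $\alpha$ alone. Scaling the Fubini--Study form so that its total symplectic volume matches that of $\dtrelcharvar$, the moment polytope of $\CP^{n-3}$ under the standard torus action is the same simplex, and Delzant produces an equivariant symplectomorphism $\Phi$.

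To check that $\Phi$ coincides with the map of the statement, I would compare action-angle coordinates on each side. On the open cell $\{z_0 \neq 0\}$ of $\CP^{n-3}$, the scaled Fubini--Study form admits Darboux coordinates $\bigl(C|z_i|^2 / |z|^2,\; \arg(z_i/z_0)\bigr)$ for $i = 1, \ldots, n-3$, and under the candidate map $[\sqrt{a_0}:\sqrt{a_1}e^{i\sigma_1}:\ldots:\sqrt{a_{n-3}}e^{i\sigma_{n-3}}]$ these pull back to $(a_i, \sigma_i)$. Since the candidate is equivariant and has the same moment map as $\Phi$, and an equivariant symplectomorphism between symplectic toric manifolds is uniquely determined by its moment map up to the $T^{n-3}$-action, the candidate agrees with $\Phi$ on the open dense stratum of non-degenerate chains and extends continuously to the boundary. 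The main obstacle is the geometric identification in the second paragraph: Goldman's formula fixes each twist Hamiltonian only up to an affine function of the trace, and pinpointing the normalization that produces $a_i = 2\,\mathrm{area}(\Delta_i)$ rather than some other affine combination of angles requires careful Gauss--Bonnet bookkeeping on each triangle together with a consistent sign convention for rotation. Once this is settled, the rest follows formally from Delzant's theorem.
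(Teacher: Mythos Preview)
Your strategy diverges from the paper's, and there is a genuine gap. The paper does \emph{not} invoke Delzant to produce an abstract isomorphism and then identify it with $\Cgot$; instead it proves directly that the explicit map $\Cgot$ is a bijection, is continuous, is equivariant, is $C^1$, has invertible differential, and finally is symplectic (the last step via a short lemma saying that two symplectic forms with the same Hamiltonian torus action and the same moment map coincide). The bulk of the work, and the point the paper emphasizes as the main difficulty, is the $C^1$ property at points where some triangle $\Delta_i$ degenerates: there the angle parameters $\gamma_i$ are discontinuous and the square roots $\sqrt{a_i}$ are non-smooth, and the paper spends several lemmata writing everything in terms of explicit coordinates on $\Hyp$ to show that the singular contributions cancel.

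Your argument tries to bypass this analysis, but the bypass does not close. The uniqueness statement you invoke (``an equivariant symplectomorphism between symplectic toric manifolds is uniquely determined by its moment map up to the torus'') applies only to maps already known to be equivariant symplectomorphisms; you cannot use it to conclude that the candidate $\Cgot$ agrees with the Delzant map $\Phi$, because $\Cgot$ is not yet known to be symplectic, or even smooth, or even continuous. One could try to repair this: on the regular locus $\Cgot$ is analytic and equivariant with the correct moment map, and an Arnold--Liouville argument shows it is symplectic there, hence equals $\Phi$ composed with some torus-valued function $f(\mu)$ of the moment variables; the symplectic condition forces $\partial_j f_i = \partial_i f_j$ but not $f$ constant, so one still has to analyse the boundary behaviour. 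And even granting $f$ constant, to conclude $\Cgot = \Phi\circ t$ globally you need $\Cgot$ to be continuous across the degenerate strata, which is itself a nontrivial computation (the paper's Lemma on continuity). In short, the regularity of the explicit formula at degenerate chains cannot be avoided; the Gauss--Bonnet bookkeeping you flag as the main obstacle is in fact the easy part.
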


Both spaces involved in the statement of Theorem ~\ref{thm:symplectomorphism-intro} are equipped with a natural symplectic structure: the Goldman symplectic form $\omega_\G$ for the character variety and the Fubini-Study form for the complex projective space. It is known since 
\cite{DeTh19} that each Deroin-Tholozan character variety and $\CP^{n-3}$ are abstractly isomorphic. It is a consequence of Delzant's classification of symplectic toric manifolds. Theorem ~\ref{thm:symplectomorphism-intro} goes one step further and provides an explicit isomorphism.

The map of Theorem ~\ref{thm:symplectomorphism-intro} from a Deroin-Tholozan relative character variety to $\CP^{n-3}$ factors through the sphere $S^{2n-5}\subset \C^{n-2}$ of radius $\alpha_1+\ldots+\alpha_n-2\pi(n-1)$. Indeed, we will prove in Section \ref{sec:complex-proj-coordinates} that $a_0+a_1+\ldots+a_n$ is constant for each Deroin-Tholozan relative character variety and equals $\alpha_1+\ldots+\alpha_n-2\pi(n-1)$.

The main difficulty in the proof of Theorem ~\ref{thm:symplectomorphism-intro} relies in checking that the map is differentiable. This requires a careful analysis of all the parameters involved. The primary source of trouble is the erratic behaviour of the parameters $\sigma_i$ when a triangle in a chain degenerates to a single point and the presence of square roots on the parameters $a_i$. An immediate consequence of Theorem ~\ref{thm:symplectomorphism-intro} is

\begin{thmx}[see Corollary ~\ref{cor:analogue-Wolpert-formula}]\label{thm:wolpert-like-formula}
On an open and dense subset of each Deroin-Tholozan relative character variety, it holds that
\[
\omega_\G=\frac{1}{2}\sum_{i=1}^{n-3}da_i\wedge d\sigma_i,
\]
where $\omega_\G$ is the Goldman symplectic form. In particular, the 2-form $\sum_{i=1}^{n-3}da_i\wedge d\sigma_i$ is independent of the pants decomposition used to define the coordinates $\big\{a_1,\ldots,a_{n-3},\sigma_1,\ldots,\sigma_{n-3}\big\}$.
\end{thmx}

The open and dense subset of Theorem \ref{thm:wolpert-like-formula} consists of the conjugacy classes of representations for which no triangles in the corresponding chain is degenerate to a point. Equivalently, it is the locus of the Deroin-Tholozan relative character variety on which $a_i>0$ for every $i=0,1,\ldots,n$.

Theorem ~\ref{thm:wolpert-like-formula} is the analogue of a famous result of Wolpert known as \emph{Wolpert's magic formula} in the context of Teichmüller space. We briefly explain the analogy. The Teichmüller space of a closed hyperbolic surface of genus $g$ can be identified with $(0,\infty)^{3g-3}\times \R^{3g-3}$ using Fenchel-Nielsen coordinates, see e.g.\ 
\cite{FaMa12}. Fenchel-Nielsen coordinates consist of length parameters $\ell_1,\ldots,\ell_{3g-3}$ and twist parameters $\theta_1,\ldots,\theta_{3g-3}$. They are defined for any pants decomposition of the surface. Wolpert proved in 
\cite{Wol83} that the length and twist parameters are dual to each other and that the 2-form 
\begin{equation}\label{eq:2-form-Fenchel-Nielsen-coordinates}
\frac{1}{2}\sum_{i=1}^{3g-3}d\ell_i\wedge d\theta_i
\end{equation}
is independent of the choice of the pants decomposition. He did so by proving that the 2-form \eqref{eq:2-form-Fenchel-Nielsen-coordinates} is equal to the Weil-Petersson form on Teichmüller space. This relation is nowadays known as Wolpert's magic formula. Goldman proved in 
\cite{Gol84} that the Weil-Petersson form is a multiple of the Goldman symplectic form if one sees Teichmüller space as a component of the character variety of representations of the fundamental group of the surface into $\psl$.

Another analogy can be drawn between Theorem ~\ref{thm:action-angle-coordinates} and the work of Kapovich-Millson. It was proved in 
\cite{KaMi96} that the moduli space of polygons in $\R^3$ with fixed side lengths  is isomorphic to the character variety of representations of the fundamental group of a punctured sphere into the isometries of $\R^3$. The authors further proved that the lengths $\ell_i$ of the diagonals emanating from the same vertex and the angle $\theta_i$ between the triangles they define are action-angle coordinates. In the context of Theorem ~\ref{thm:action-angle-coordinates}, the definition of the coordinates $\big\{a_1,\ldots,a_{n-3},\sigma_1,\ldots,\sigma_{n-3}\big\}$ relies on the modelling of Deroin-Tholozan representations in terms of chains of geodesic triangles in the upper half-plane. In the spirit of \cite{KaMi96}, we introduce a moduli space of \emph{chains of geodesic triangles} in the upper half-plane and show that it is in one-to-one correspondence with the relative character variety of Deroin-Tholozan representations. We refer to it as the \emph{polygonal model} for Deroin-Tholozan representations. 


The question of the geometrization of Deroin-Tholozan representations has been treated in \cite{DeTh19}, where it was proven that they arise as monodromies of hyperbolic metrics on $\Sigma_n$ with conical singularities. The metrics under consideration have two types of singularities: $n$ cone angles $2\pi-\alpha_i$ at the punctures, called \emph{fractional singularities}, and $n-3$ further cone angles $4\pi$ that can coalesce with fractional singularities and each other. The resulting moduli space is homeomorphic to the product of the Teichmüller space of $\Sigma_n$ with the Deroin-Tholozan relative character variety, see \cite[§4]{DeTh19}. We denote it $\overline{\text{Hyp}}_{\alpha}$. In the companion paper \cite{FeMa23}, together with Aaron Fenyes, we provide a piece-by-piece construction of such hyperbolic cone metrics based on chains of triangles and we prove that any Deroin-Tholozan representation can be obtained as the holonomy of a hyperbolic cone metric built this way.
\subsection{Organization of the paper}

Section ~\ref{sec:preliminaries} fixes notation and recalls some fundamental notions about character varieties, including the notion of volume of a representation. It introduces the Deroin-Tholozan relative character variety
\[
\dtrelcharvar
\]
and also provides a brief recap of the main results of \cite{DeTh19}. These include total ellipticity and the toric structure.

In Section ~\ref{sec:polygonal-model} we introduce the polygonal model for Deroin-Tholozan representations as a certain moduli space denoted by
\[
\ChainTriang
\]
and consisting of chains of geodesic triangles in the upper half-plane. Sufficient conditions for a chain of triangles to lie inside $\ChainTriang$ are described in Lemma ~\ref{lem:description-chain-triangles-alpha}, see also Figure ~\ref{fig:admissible_chain}. We show that there is a one-to-one correspondence 
\[
\mathfrak P\colon \dtrelcharvar\to \ChainTriang.
\]
We also explain how the parameters used to define Deroin-Tholozan representations and quantities associated to them, such as their volume, can be measured from the associated chain of triangles. We explain how to visualize the integrable system dynamics in that picture.

In Section ~\ref{sec:complex-proj-coordinates} we formally define the coordinates $\big\{a_1,\ldots,a_{n-3},\sigma_1,\ldots,\sigma_{n-3}\big\}$. The coordinates are used to define the map
\[
\Cgot\colon \dtrelcharvar\to\CP^{n-3}
\]
of Theorem ~\ref{thm:symplectomorphism-intro} which is carefully stated at the start of Section ~\ref{sec:complex-proj-coordinates} as Theorem ~\ref{thm:symplectomorphism}. At the end of the section we explain how Theorem ~\ref{thm:wolpert-like-formula} (see Corollary ~\ref{cor:analogue-Wolpert-formula}) follows from Theorem ~\ref{thm:symplectomorphism-intro}.

Section ~\ref{sec:proof-of-theorem} is dedicated to the proof of Theorem ~\ref{thm:symplectomorphism-intro}. The proof is divided into successive steps where we first address the questions of bijectivity, continuity and then differentiability. 

\begin{figure}[h]
\begin{center}
\begin{tikzpicture}[scale=.8, framed,font=\sffamily,decoration={
    markings,
    mark=at position 1 with {\arrow{>}}}]]
    
\node[anchor=south west,inner sep=0] at (.8,-.5) {\includegraphics[scale=.12]{fig/triangles-black}};
		 
\draw (2.4,2.5) node{\huge $\ChainTriang$};

\draw (7,5.5) node{\huge $\dtrelcharvar$};

\draw (-2.5,5.5) node{\huge $\CP^{n-3}$};

\draw (2.4,-3) node{\huge $\overline{\textnormal{Hyp}}_{\alpha}$};

\draw[->] (7,5) to[bend left =40] node[below right]{\large $\mathfrak P$} (3.5, 2.5);

\draw[->] (5,5.5) to node[below]{\large $\Cgot$} (-1.5,5.5);

\draw[->, dashed] (3.3,-3) to[bend right =30] (7.2,5);
\draw[->, dashed] (1,2.5) to[bend left =30] (-2,5);
\draw[->, dashed] (1.3,-3) to[bend left =30] (-2.7,5);

\end{tikzpicture}
\caption{The various aspects of Deroin-Tholozan representations are drawn as a diagram.  The space $\overline{\textnormal{Hyp}}_{\alpha}$ refers to the space of hyperbolic metrics with conical singularities introduced at the end of Section~\ref{sec:results}. It projects to $\dtrelcharvar$ as mentioned earlier. The projection corresponds to the outer dashed arrows.}
\end{center}
\end{figure}

\subsection{Acknowledgements} I would like to thank first and foremost my doctoral advisers Peter Albers and Anna Wienhard for their constant support during my time as a PhD student. This work benefited greatly from informal discussions with many members of the Heidelberg math community. Special thanks must be addressed to Andy Sanders and Beatrice Pozzetti. I am also grateful to Fernando Camacho Cadena, Aaron Fenyes, Xenia Flamm, and Nicolas Tholozan for their precious comments. I extend special appreciation to the anonymous referee for suggesting many insightful corrections.

This work was supported by the Deutsche Forschungsgemeinschaft under Germany’s Excellence Strategy EXC2181/1 -- 390900948 (the Heidelberg STRUCTURES Excellence Cluster), the Collaborative Research Center SFB/TRR 191 -- 281071066 (Symplectic Structures in Geometry, Algebra and Dynamics), and the Research Training Group RTG 2229 -- 281869850 (Asymptotic Invariants and Limits of Groups and Spaces).

\section{Preliminaries}\label{sec:preliminaries}
Let $G\coloneqq\psl$ be the Lie group of orientation-preserving isometries of the upper half-plane $\Hyp$. We denote by $\g$ its Lie algebra consisting of traceless two-by-two real matrices. Let $n\geq 3$ be an integer. We denote by $\Sigma_n$ an oriented and connected surface of genus zero with $n$ labelled punctures. Let $\pi_1(\Sigma_n)$ be its fundamental group. It can be presented as
\begin{equation}\label{eq:fundamental-group-SIgma_n}
\pi_1(\Sigma_n)=\langle c_1,\ldots,c_n:\prod_{i=1}^n c_i=1\rangle.
\end{equation}
We think of the generator $c_i$ of $\pi_1(\Sigma_n)$ as the homotopy class of based loops enclosing the $i$th puncture of $\Sigma_n$. The \emph{character variety} associated to the pair $(\Sigma_n,G)$ is the topological quotient of the space of group homomorphisms $\pi_1(\Sigma_n)\to G$ by the conjugation action of $G$. It is denoted $\charvar$.

We are interested in representations $\pi_1(\Sigma_n)\to G$ that map each generator $c_i$ of $\pi_1(\Sigma_n)$ to an \emph{elliptic} element inside $G$. Elliptic transformations in $G$ are, by definition, those which act on $\Hyp$ with a unique fixed point. Equivalently, $A\in G$ is elliptic if and only if the absolute value of its trace is smaller than two. The subset of elliptic elements of $G$ is denoted by
\[
\E\coloneqq\left\{A\in G : A \text{ is elliptic}\right\}.
\]
It is analytically diffeomorphic to the open ball $\Hyp \times (0,2\pi)$. In fact, an analytic diffeomorphism $(\fix,\vartheta)\colon\E\to \Hyp \times (0,2\pi)$ can be defined as follows. The first component $\fix\colon\E\to\Hyp$ maps $A\in\E$ to its unique fixed point $\fix(A)$ in $\Hyp$. The second component $\vartheta\colon \E\to (0,2\pi)$ records the \textit{angle of rotation} $\vartheta (A)$ of $A\in \E$, see Appendix ~\ref{appendix-PSL2R} for a precise definition. The angle of rotation is invariant under the $G$-action on $\E$ by conjugation and completely distinguishes the conjugacy classes of $\E$. We refer the reader to Appendix ~\ref{appendix-PSL2R} for more considerations on $\E$ and various assorted formulae.

Given a tuple of angles $\alpha=(\alpha_1,\ldots,\alpha_n)\in (0,2\pi)^n$, we define the \emph{relative character variety} of representations of $\pi_1(\Sigma_n)$ into $G$ associated to $\alpha$ to be the subspace of the character variety given by
\[
\relcharvar\coloneqq\big\{[\phi]\in\charvar:\vartheta(\phi(c_i))=\alpha_i,\quad \forall i=1,\ldots,n\big\}.
\]
The relative character variety $\relcharvar$ is an analytic manifold of dimension $2(n-3)$ for all possible choices of $\alpha$. Each of these relative character varieties has a natural symplectic structure for any choice of a non-degenerate, symmetric, $\Ad$-invariant bilinear form on the Lie algebra $\g$ of $G$, see 
\cite{Gol84} and 
\cite{GHJW97} for the original results and 
\cite{Mon16} for a recent account. The symplectic structure is named after Goldman who originally constructed it in the case of closed surfaces in 
\cite{Gol84}. 

\begin{conv}
We equip the relative character varieties $\relcharvar$ with the Goldman symplectic form $\omega_\G$ built from the trace form:
\begin{align*}
\trace\colon &\g\times \g\longrightarrow\R\nonumber \\
&(A, B)\mapsto \trace(AB).
\end{align*}
\end{conv}

\subsection{Volume of a representation} There is an important tool to study the topology of relative character varieties called \emph{volume of a representation} (or \emph{Toledo number}). We briefly recall its definition below for the case of representations of the punctured sphere into $G$. The reader is referred to 
Burger-Iozzi-Wienhard \cite{BIW10} for more details and for the general construction.

Given three points $z_1$, $z_2$ , $z_3$ in the upper half-plane, we denote by $\Delta (z_1,z_2,z_3)$ the oriented geodesic triangle with vertices $z_1$, $z_2$, $z_3$. Its signed area, computed with the standard volume form of $\Hyp$, is denoted by
\[
[\Delta(z_1,z_2,z_3)].
\]
Fix a base point $z\in \Hyp$. Consider the function
\begin{align}
c\colon &G\times G\to \R \label{eq:cocycle-c}\\
&(A_1,A_2)\to \big[\Delta\big(z,A_1z,A_1A_2z\big)\big]. \nonumber
\end{align}
A rapid computation (see for instance \cite[Lemma 5.1.2]{Ma22}) shows that $c$ satisfies the cocycle condition
\begin{equation}\label{eq:area-cocycle-formula}
c(A_2,A_3)-c(A_1A_2,A_3)+c(A_1,A_2A_3)-c(A_1,A_2)=0
\end{equation}
for every $A_1$, $A_2$, $A_3 \in G$. Therefore, $c$ defines a group cohomology class $\kappa\coloneqq[c]$ inside $H^2(G;\R)$. Moreover the function $c$ is bounded because the area of a hyperbolic triangle is always smaller than $\pi$. The cohomology class $\kappa$ is thus a class in \emph{bounded} group cohomology $\kappa\in H^2_b(G;\R)$. The class $\kappa$ is independent of the choice of the base point $z$ involved in the definition of $c$ (whereas $c$ does depend on the point $z$), see \cite[Lemma 5.1.3]{Ma22} for a proof. Given a representation $\phi\colon \pi_1(\Sigma_n)\to G$ we can pull back $\kappa$ to the class $\phi^\ast \kappa$ inside $H^2_b(\pi_1(\Sigma_n);\R)$. We denote by $\partial \pi_1(\Sigma_n)$ the collection $\{\langle c_1\rangle,\ldots,\langle c_n\rangle\}$ of rank 1 subgroups of $\pi_1(\Sigma_n)$. An important property of bounded cohomology of amenable groups says that the map
\[
j\colon H^2_b(\pi_1(\Sigma_n),\partial \pi_1(\Sigma_n);\R)\to H^2_b(\pi_1(\Sigma_n);\R)
\]
from the long exact sequence in cohomology of the pair $\big(\pi_1(\Sigma_n),\partial \pi_1(\Sigma_n)\big)$ is an isomorphism. Integrating along the fundamental class $[\pi_1(\Sigma_n),\partial \pi_1(\Sigma_n)]$ corresponding to the orientation of $\Sigma_n$ provides an isomorphism from $H^2_b(\pi_1(\Sigma_n),\partial \pi_1(\Sigma_n);\R)$ to $\R$.

\begin{dfn}\label{def:volume-representation}
The \emph{volume} of a representation $\phi\colon \pi_1(\Sigma_n)\to G$ is the real number defined by
\[
\vol(\phi)\coloneqq j^{-1}(\phi^\ast \kappa)\frown [\pi_1(\Sigma_n),\partial \pi_1(\Sigma_n)].
\]
\end{dfn}

The volume is a well-defined function of the character variety
\[
\vol\colon \charvar\to \R.
\]
To state the properties of the volume it is convenient to introduce the following function. The angle function $\vartheta\colon \E\to (0,2\pi)$ can be extended to an upper semi-continuous function $\overline\vartheta\colon G\to [0,2\pi]$ by
\[
\overline\vartheta(A)\coloneqq\left\{\begin{array}{ll}
\vartheta(A), &\text{ if $A$ is elliptic,}\\
0, &\text{ if $A$ is hyperbolic or positively parabolic,}\\
2\pi, &\text{ if $A$ is the identity or negatively parabolic.}
\end{array}\right.
\]
The notions of being positively and negatively parabolic refer to the two conjugacy classes of parabolic elements in $G$, see \eqref{eq:parabolic-PSL(2,R)}.

\begin{thm}[\cite{BIW10}]\label{thm:volume}
The volume function $\vol\colon \charvar\to \R$ has the following properties:
\begin{enumerate}
\item $\vol$ is locally constant on each relative character variety,
\item \emph{(Milnor-Wood inequality)} $\vol$ is bounded:
\[
\vert \vol\vert \leq 2\pi(n-2),
\]
\item \emph{(additivity)} if $\Sigma_n$ is separated by a simple closed curve into two surfaces $S_1$ and $S_2$, then, for every $[\phi]\in \charvar$,
\[
\vol([\phi])=\vol([\phi\restriction_{\pi_1(S_1)}])+\vol([\phi\restriction_{\pi_1(S_2)}]),
\]
\item for every $[\phi]\in \charvar$, there exists an integer $k([\phi])$ satisfying
\[
\vol([\phi])=2\pi k([\phi])-\sum_{i=1}^n\overline\vartheta(\phi(c_i)).
\]
\end{enumerate}  
\end{thm}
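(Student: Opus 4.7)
The plan is to establish (4) first, from which (1) follows immediately, and then to derive (2) and (3) by standard bounded cohomology arguments.

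For (4), I would work with lifts to the universal cover $\pi\colon \tilde G\to G$. Each elliptic $\phi(c_i)\in \E$ admits a canonical lift $\widetilde{\phi(c_i)}\in \tilde G$ distinguished by the requirement that its rotation number equals $\vartheta(\phi(c_i))/2\pi\in (0,1)$. Since $\prod c_i=1$ in $\pi_1(\Sigma_n)$, the product $\prod_i \widetilde{\phi(c_i)}$ lies in the central subgroup $\ker\pi\cong\Z$ and equals $z_0^{k(\phi)}$ for a unique integer $k(\phi)$, the relative Euler class. To match $k(\phi)$ with $\vol(\phi)$, I would construct a primitive of $c$ on the elliptic locus, for instance $\tau(A):=\bar\vartheta(A)/2$. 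A direct computation with hyperbolic areas, based on decomposing the triangle $\Delta(z, A_1 z, A_1 A_2 z)$ via rotations around the fixed points of $A_1$ and $A_1 A_2$, should show that $c-\delta\tau$ takes values in $\pi\Z$ and records exactly the integer twist captured by $k(\phi)$. Pairing the resulting relative cocycle with $[\pi_1(\Sigma_n),\partial\pi_1(\Sigma_n)]$ and simplifying using the presentation \eqref{eq:fundamental-group-SIgma_n} then yields the claimed identity.

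Assertion (1) is then immediate: on $\relcharvar$ the angles $\bar\vartheta(\phi(c_i))=\alpha_i$ are fixed, so by (4) we have $\vol(\phi)=2\pi k(\phi)-\sum_i \alpha_i$; since $\vol$ is continuous in $\phi$ (the cocycle $c$ is continuous and the cap product is continuous) and $k(\phi)$ is integer-valued, $k$, and therefore $\vol$, must be locally constant. For (2), I would invoke the duality between bounded cohomology and $\ell^1$-homology: every hyperbolic triangle has area strictly less than $\pi$, so $\lVert c\rVert_\infty\le \pi$, while the $\ell^1$-simplicial norm of the relative fundamental class of a punctured sphere equals $2(n-2)$, yielding $|\vol|\le 2\pi(n-2)$. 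For (3), cutting $\Sigma_n$ along a simple closed curve induces a decomposition $[\pi_1(\Sigma_n),\partial\pi_1(\Sigma_n)]=[\pi_1(S_1),\partial\pi_1(S_1)]+[\pi_1(S_2),\partial\pi_1(S_2)]$ in which the cutting curve appears with cancelling orientations; the additivity of the pairing under this decomposition, combined with naturality of pullback under the inclusions $\pi_1(S_i)\hookrightarrow \pi_1(\Sigma_n)$, delivers the formula.

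The main obstacle is (4), and specifically the construction of the primitive $\tau$. Because $\tau$ is naturally defined only on $\E$, one must extend it in an essentially arbitrary way to all of $G$ and then verify that the integer emerging from the pairing is independent of this choice and coincides with the lift count $k(\phi)$. Carrying out the book-keeping in the central extension $1\to\Z\to\tilde G\to G\to 1$ without introducing stray signs or $2\pi$ offsets is where the computation is most delicate, and where the real content of \cite{BIW10} lies.
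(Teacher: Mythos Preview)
The paper does not prove this theorem: it is stated with attribution to \cite{BIW10} and used as a black box. There is therefore no proof in the paper to compare your proposal against.

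That said, your outline is a reasonable sketch of how the result is actually established in the literature. The approach to (4) via lifts to $\tilde G$ and rotation numbers is standard, and your identification of the delicate point---matching the central-extension integer with the bounded-cohomological pairing---is accurate. Your derivations of (1) from (4), of (2) from the $\ell^\infty$-bound on $c$ together with the simplicial volume $\lVert[\Sigma_n,\partial\Sigma_n]\rVert_1=2(n-2)$, and of (3) from additivity of the relative fundamental class under gluing, are all correct in spirit. One small caveat: the function $\bar\vartheta$ as defined in the paper is not continuous on $G$, so the continuity argument for (1) should be phrased on each fixed $\relcharvar$ (where the $\alpha_i$ are constant by hypothesis) rather than invoking continuity of $\bar\vartheta$ globally.
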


Deroin-Tholozan called the integer $k([\phi])$ the \emph{relative Euler class} of $[\phi]$. We stick to this terminology.

To conduct explicit computations of the volume of a representation, as it will be the case in the proof of Lemma ~\ref{lem:volume-case-n=3}, it is convenient to fix a resolution for group (co)homology. We choose to work with the bar resolution. The reader is referred to 
\cite[Chapter 7]{Nos17} or 
\cite{Loh10} for the definition of the bar complex and all the relevant formulae. The fundamental class $[\pi_1(\Sigma_n),\partial \pi_1(\Sigma_n)]$ can be expressed in the bar resolution as follows. First consider the 2-chain 
\begin{equation}\label{eq:fundamental-class}
e\coloneqq(c_1,c_2)+(c_1c_2,c_3)+\ldots+(c_1c_2\cdots c_{n-1},c_n)+(1,1)
\end{equation}
in the bar complex of the group $\pi_1(\Sigma_n)$. The 2-chain $(e,c_1,\ldots,c_n)$ in the relative bar complex of the pair $(\pi_1(\Sigma_n),\partial \pi_1(\Sigma_n))$ is closed and its homology class is the fundamental class $[\pi_1(\Sigma_n),\partial \pi_1(\Sigma_n)]$, see 
\cite{GHJW97} or \cite[Lemma 4.2.12]{Ma22} for explicit computations. 

\subsection{Remarkable connected components}
Deroin-Tholozan proved in 
\cite{DeTh19} that the relative Euler class of any $[\phi]\in\relcharvar$ is bounded above by $n-1$. Furthermore, they proved that there exists $[\phi]\in\relcharvar$ with $k([\phi])=n-1$ if and only if 
\begin{equation}\label{eq:angles-condition}
\alpha_1+\ldots+\alpha_n> 2\pi(n-1).
\end{equation}
These representations were originally called \emph{supra-maximal} because they maximize the relative Euler class. However, these representations do not have maximal volume and are thus not \emph{maximal} in the sense of 
\cite{BIW10}. They even tend to minimize the volume in absolute value. Indeed, if $[\phi]\in\relcharvar$ satisfies $k([\phi])=n-1$, then
\[
\vol([\phi])=2\pi(n-1)-\alpha_1-\ldots-\alpha_n \in (-2\pi,0).
\]
The range of the volume over $\charvar$, according to the Milnor-Wood inequality stated in Theorem ~\ref{thm:volume}, is $[-2\pi(n-2),2\pi(n-2)]$. To avoid any further confusion we prefer the terminology of \textit{Deroin-Tholozan representations} instead of that of supra-maximal representations.

\begin{dfn}\label{def:scaling-factor}
The real number
\[
\lambda\coloneqq \alpha_1+\ldots+\alpha_n -2\pi(n-1)<2\pi
\]
is called the \emph{scaling factor}. The condition \eqref{eq:angles-condition}, or equivalently the condition $\lambda >0$, is referred to as the \emph{angles condition} on $\alpha$.
\end{dfn}

Observe that $[\phi]\in\relcharvar$ satisfies $k([\phi])=n-1$ if and only if it satisfies $\vol([\phi])=-\lambda$.

\begin{conv}
From now on, and unless otherwise stated, a vector of angles $\alpha\in (0,2\pi)^n$ is assumed to satisfy the angles condition \eqref{eq:angles-condition}.
\end{conv}

\begin{dfn}
The subset of $\relcharvar$ consisting of those classes of representations $[\phi]$ with $\vol([\phi])=-\lambda$ is called the \emph{Deroin-Tholozan relative character variety} and is denoted by
\[
\dtrelcharvar.
\]
Any representation whose conjugacy class lies inside $\dtrelcharvar$ is called a \emph{Deroin-Tholozan representation}.
\end{dfn}

\begin{thm}[\cite{DeTh19}]\label{thm:deroin-tholozan-symplecto}
The Deroin-Tholozan relative character variety is a nonempty and compact connected component of the relative character variety. It is moreover symplectomorphic to the complex projective space of complex dimension $n-3$:
\[
\big(\dtrelcharvar,\omega_\G\big)\cong \big(\CP^{n-3},\lambda\cdot\omega_\FS\big),
\]
where $\omega_\FS$ is the Fubini-Study symplectic form on $\CP^{n-3}$ with volume $\pi^{n-3}/(n-3)!$.
\end{thm}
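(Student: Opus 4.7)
The plan is to realize $\dtrelcharvar$ as a symplectic toric manifold whose moment polytope is the Delzant simplex of $(\CP^{n-3},\lambda\,\omega_\FS)$, and then to invoke Delzant's classification of symplectic toric manifolds. The strategy hinges on exhibiting a Hamiltonian $T^{n-3}$-action, which requires that Deroin--Tholozan representations be \emph{totally elliptic}: for every $[\phi]\in\dtrelcharvar$ and every essential simple closed curve $\gamma\subset\Sigma_n$, $\phi(\gamma)$ is elliptic. Such a $\gamma$ is necessarily separating since $\Sigma_n$ has genus zero; if it splits $\Sigma_n$ into $\Sigma'$ and $\Sigma''$, volume additivity (Theorem \ref{thm:volume}(3)) together with the angle formula (Theorem \ref{thm:volume}(4)) yields
\[
\overline\vartheta(\phi(\gamma))+\overline\vartheta(\phi(\gamma)^{-1})=2\pi(k'+k''-n+1),
\]
where $k',k''$ are the relative Euler classes of the two restrictions. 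The Milnor--Wood inequality forces $k'+k''\in\{n-1,n,n+1\}$, and a case analysis using the sharpness of Milnor--Wood on each side rules out the hyperbolic, parabolic and identity cases.

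With total ellipticity in hand, I would fix a pants decomposition of $\Sigma_n$ with separating curves $\delta_1,\ldots,\delta_{n-3}$. Goldman's twist flow along each $\delta_i$ is a Hamiltonian circle action on $\dtrelcharvar$ whose Hamiltonian is, up to an affine normalization, $\vartheta(\phi(\delta_i))$. Since the $\delta_i$ are disjoint, these flows commute and assemble into a Hamiltonian $T^{n-3}$-action with moment map
\[
\mu=\bigl(\vartheta(\phi(\delta_1)),\ldots,\vartheta(\phi(\delta_{n-3}))\bigr).
\]
Because $\dim_\R\dtrelcharvar=2(n-3)$ equals twice the dimension of $T^{n-3}$, the action is of maximal rank; a standard stabilizer check gives effectiveness, so $\dtrelcharvar$ is a symplectic toric manifold.

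The core computation is then the moment polytope $P:=\mu(\dtrelcharvar)\subset\R^{n-3}$. For each pair of pants in the decomposition, the existence of an elliptic triple $(A_1,A_2,A_3)\in\E^3$ with $A_1A_2A_3=1$, prescribed rotation angles $(\beta_1,\beta_2,\beta_3)\in(0,2\pi)^3$, and the extremal relative Euler class on that pair of pants is governed by an explicit triangle-type system of linear inequalities. Assembling these constraints across the decomposition and using the angles condition $\alpha_1+\ldots+\alpha_n>2\pi(n-1)$, one checks that $P$ is combinatorially an $(n-3)$-simplex whose primitive inward edge vectors at each vertex form a basis of $\Z^{n-3}$ and whose coordinate edges have length $\lambda$, which is precisely the Delzant polytope of $(\CP^{n-3},\lambda\,\omega_\FS)$. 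Delzant's theorem then delivers the desired equivariant symplectomorphism. Non-emptiness, compactness and connectedness are inherited from $\CP^{n-3}$, and since $\vol$ is locally constant (Theorem \ref{thm:volume}(1)), $\dtrelcharvar=\vol^{-1}(-\lambda)$ is open and closed in $\relcharvar$, hence a single connected component.

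The main obstacle I anticipate is the moment polytope computation: carefully tracking signs, orientations, and the induced lattice structure when assembling the three-holed sphere constraints along the decomposition, so that the edge lengths come out to precisely $\lambda$ and the Delzant lattice condition holds at every vertex. A secondary difficulty is ruling out parabolic boundary behaviour in the proof of total ellipticity, which calls for a delicate limiting analysis at the edges of the elliptic locus in $G$.
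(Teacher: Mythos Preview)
Your proposal is correct and follows exactly the strategy the paper attributes to Deroin--Tholozan: establish total ellipticity, use Goldman twist flows along a pants decomposition to obtain a Hamiltonian $T^{n-3}$-action, compute the moment polytope as a simplex, and invoke Delzant's classification. The paper does not itself prove this theorem but cites it, sketching this same argument and later carrying out the polytope computation (Section~\ref{sec:torus-action-revisited}) with the affinely normalized moment map $\mu_i=\tfrac{1}{2\lambda}(\alpha_{i+2}+\beta_{i+1}-\beta_i-2\pi)$, so that the image is the standard $(n-3)$-simplex of side length $1/2$ rather than your raw angles $\vartheta(\phi(\delta_i))$.
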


\begin{rem}
These compact connected components were already discovered by Benedetto-Goldman in the case $n=4$ 
\cite{BeGo99}.
\end{rem}

Deroin-Tholozan representations have an important property called total ellipticity. A representation is called \emph{totally elliptic} if it maps any simple closed curve to an elliptic element of $G$. Total ellipticity for Deroin-Tholozan representation was originally proved in 
\cite{DeTh19} for a particular collection of simple closed curves. The argument generalizes immediately to any simple closed curve, see 
\cite{Ma20}.
\begin{prop}[total ellipticity]\label{prop:totally-elliptic}
Let $a\in\pi_1(\Sigma_n)$ denote the homotopy class of a simple closed curve on $\Sigma_n$. Then $\phi(a)\in G$ is elliptic for any $[\phi]\in\dtrelcharvar$.
\end{prop}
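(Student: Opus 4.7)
My plan is to combine the additivity of the volume, the identity $\vol=2\pi k-\sum\overline\vartheta$, and Deroin--Tholozan's sharp upper bound on the relative Euler class to rule out every non-elliptic conjugacy class for $\phi(a)$. Since $\Sigma_n$ has genus zero, the simple closed curve $a$ is separating; cutting along it produces two punctured spheres $S_1$ and $S_2$ carrying $k$ and $n-k$ of the original punctures respectively, each with an additional boundary component. I arrange orientations so that the boundary of $S_1$ represents $\phi(a)$ while the boundary of $S_2$ represents $\phi(a)^{-1}$. Writing $m_i$ for the relative Euler class of $\phi\restriction_{\pi_1(S_i)}$, Theorem \ref{thm:volume}(3)--(4) applied to each restriction, summed, together with $\vol(\phi)=-\lambda=2\pi(n-1)-\sum_{i=1}^n\alpha_i$, yields the key identity
\[
\overline\vartheta(\phi(a))+\overline\vartheta(\phi(a)^{-1})=2\pi\bigl(m_1+m_2-(n-1)\bigr).
\]
The left-hand side equals $0$ when $\phi(a)$ is hyperbolic, $2\pi$ when $\phi(a)$ is elliptic or parabolic (using that a parabolic element and its inverse lie in opposite parabolic conjugacy classes), and $4\pi$ when $\phi(a)$ is the identity.

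Next I apply the Deroin--Tholozan inequality to each piece $S_i$, viewed as a $(k+1)$- respectively $(n-k+1)$-punctured sphere. It gives $m_1\leq k$ and $m_2\leq n-k$, hence $m_1+m_2\leq n$, ruling out the identity case immediately. Crucially, the equality clause of that inequality asserts that $m_i$ attains its maximum only when the corresponding angles condition holds. If the boundary angle on $S_1$ equals $0$, then its angles condition reads $\alpha_1+\cdots+\alpha_k>2\pi k$, which fails since each $\alpha_j<2\pi$; integrality of $m_1$ then forces the strict bound $m_1\leq k-1$. In the hyperbolic case both boundary angles vanish, so $m_1+m_2\leq n-2$, contradicting $m_1+m_2=n-1$. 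In the parabolic case exactly one of $\overline\vartheta(\phi(a))$ and $\overline\vartheta(\phi(a)^{-1})$ equals $0$ (the other equals $2\pi$), so one strict and one non-strict bound give $m_1+m_2\leq n-1$, contradicting $m_1+m_2=n$. Only the elliptic case survives, as desired.

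The main obstacle is the bookkeeping in the parabolic case. One must distinguish the positive and negative parabolic conjugacy classes to ensure that exactly one of the two boundary angles is $0$ and the other is $2\pi$, and then check that on the piece with boundary angle $2\pi$ the angles condition actually \emph{holds}, so that no further contradiction arises from a miscalculation. This last point follows from the elementary fact that the global angles condition $\sum_{i=1}^n\alpha_i>2\pi(n-1)$ restricts to give $\sum_{j\in I}\alpha_j>2\pi(|I|-1)$ for every non-empty subset $I$ of punctures, which is enough to validate the inequality on the relevant piece. Once these orientation and conjugacy checks are in place, the remainder of the argument is a direct counting using integrality of $m_1$ and $m_2$.
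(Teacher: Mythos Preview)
The paper does not give its own proof of this proposition; it simply cites \cite{DeTh19} for a particular family of curves and \cite{Ma20} for the general case. Your argument---cut along $a$, apply additivity of the volume and the formula $\vol=2\pi k-\sum\overline\vartheta$ to each piece, then invoke the Deroin--Tholozan upper bound on the relative Euler class together with its equality clause to reach a contradiction in each non-elliptic case---is precisely the argument carried out in \cite{DeTh19} (for the curves $b_i$) and extended in \cite{Ma20}. So you have correctly reconstructed the intended proof.

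There is one genuine point you glide over. The bound $m_i\leq(\text{number of punctures of }S_i)-1$ and the characterization of equality are stated in this paper only for $\alpha\in(0,2\pi)^n$, i.e.\ when every peripheral element is elliptic. In your contradiction argument the boundary element $\phi(a)$ is by hypothesis hyperbolic, parabolic, or the identity, so one peripheral of each $S_i$ has $\overline\vartheta\in\{0,2\pi\}$, and the cited statement does not literally apply. The extension is true---the inequality comes from a subadditivity property of the rotation number on $\widetilde{\psl}$ that makes no reference to ellipticity, and this is what \cite{DeTh19} actually proves---but you should say so explicitly rather than silently invoke the elliptic version. The identity case can also be handled directly (remove the trivial boundary and apply the elliptic bound to the resulting smaller sphere, noting that the Euler class shifts by~$1$); the parabolic and hyperbolic cases genuinely need the general rotation-number inequality.

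A small remark: you should dispose separately of the case where $a$ is peripheral (bounds a once-punctured disk), where $\phi(a)$ is conjugate to some $\phi(c_j)^{\pm1}$ and hence elliptic by the definition of $\relcharvar$.
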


\begin{rem}
Tholozan-Toulisse found in \cite{ThTo21} analogous compact components in character varieties of representations of $\pi_1(\Sigma_n)$ into general Hermitian Lie groups such as $\SU(p,q)$ and $\Sp(2n,\R)$. The representations in these components admit very similar properties to Deroin-Tholozan representations. For instance, they are also totally elliptic, in the sense that the complex eigenvalues of the images of simple closed curves have modulus 1.
\end{rem}

The Deroin-Tholozan relative character variety admits a natural maximal and effective Hamiltonian torus action. Recall that a torus action on a symplectic manifold is called \emph{maximal} if the dimension of the torus is half the dimension of the manifold and it is called \emph{effective} if only the identity element acts trivially. Here, it is constructed following the work of Goldman in 
\cite{Gol86} on invariant functions. Recall that the angle of rotation $\vartheta\colon \E\to (0,2\pi)$ is a function invariant under conjugation defined on the subspace $\E\subset G$ of elliptic elements. Therefore, by Proposition ~\ref{prop:totally-elliptic}, any simple closed curve $a$ on $\Sigma_n$ gives a Hamiltonian function
\begin{align*}
\vartheta_a\colon & \dtrelcharvar\longrightarrow (0,2\pi) \\
&[\phi]\mapsto \vartheta(\phi(a)).
\end{align*}
The associated Hamiltonian flow $\Phi_a$ has minimal period $\pi$, see 
\cite{DeTh19}. We refer to this flow as the \emph{twist flow} along the curve $a$. Goldman proved in 
\cite{Gol86} that two twist flows $\Phi_{a_1}$ and $\Phi_{a_2}$ commute if the curves $a_1$ and $a_2$ are disjoint. Recall that a maximal collection of disjoint and non-homotopic simple closed curves on $\Sigma_n$ has cardinality $n-3$. Each such collection of curves therefore defines a Hamiltonian action of the torus $(\R/\pi\Z)^{n-3}$ on $\dtrelcharvar$ via the associated twist flows. Since $\dtrelcharvar$ has dimension $2(n-3)$, this action is maximal and equips $\dtrelcharvar$ with the structure of a symplectic toric manifold.

Deroin-Tholozan proved Theorem ~\ref{thm:deroin-tholozan-symplecto} using Delzant's classification of symplectic toric manifolds, see e.g.\ 
\cite{Can01} for a neat presentation of Delzant's classification. To any symplectic toric manifold you can associate a polytope called the \emph{moment polytope}. Delzant's classification says that the moment polytopes of two symplectic toric manifolds agree if and only if the two symplectic toric manifolds are isomorphic. Here isomorphism means an equivariant symplectomorphism. 

The goal of this article is to describe an explicit equivariant symplectomorphism between the Deroin-Tholozan relative character variety and the complex projective space. This amounts to describing angle coordinates to supplement the action coordinates given by the moment map.

\section{A polygonal model}\label{sec:polygonal-model}

The coordinates for the Deroin-Tholozan relative character variety we are about to construct depend on the choice of a pants decomposition of $\Sigma_n$. For simplicity, we will only detail the construction for a specific choice of pants decomposition (``without crossroads'') like the one illustrated on Figure \ref{fig:introduction}. Each other choice of pants decomposition of $\Sigma_n$ leads to action-angle coordinates by the same construction.

The pants decomposition we are considering is determined by a maximal collection of disjoint and non-homotopic simple closed curves. Specifically, we work with the curves\footnote{We are abusing terminology here. We use the word ``curve'' to mean an actual curve on $\Sigma_n$, its free homotopy class and its lift inside $\pi_1(\Sigma_n)$. Thereafter, the symbols $b_i$ and $c_i$ should, nevertheless, always be interpreted as elements of $\pi_1(\Sigma_n)$. If one wishes to repeat the construction of the coordinates from an abstract pants decomposition, one should start by choosing coherent lifts inside $\pi_1(\Sigma_n)$ of the free isotopy classes of the loops defining the pants decomposition. We refer the reader to Appendix ~\ref{appendix:pants-decomposition} for more consideration on the issue.} $$b_i\coloneqq c_{i+1}^{-1}c_{i}^{-1}\cdots c_1^{-1}\in \pi_1(\Sigma_n)$$ for $i=1,\ldots,n-3$, where the curves $c_i$ refer to the generators of $\pi_1(\Sigma_n)$ fixed in \eqref{eq:fundamental-group-SIgma_n}. The curves $b_i$ are illustrated on Figure ~\ref{fig:curves-b_i}.  We set $b_0\coloneqq c_1^{-1}$ and $b_{n-2}\coloneqq c_n$ for convenience. Below, we fix a maximal Hamiltonian torus action on $\dtrelcharvar$ using a combination of the twist flows along the disjoint curves $b_1,\ldots,b_{n-3}$, see Section ~\ref{sec:torus-action-revisited}. To describe angle coordinates for this torus action, we introduce a polygonal model for Deroin-Tholozan representations.

\begin{figure}[h!]
\begin{center}
\resizebox{\textwidth}{!}{%
\begin{tikzpicture}[framed, decoration={
    markings,
    mark=at position 0.6 with {\arrow{>}}}]
  \draw[postaction={decorate}] (0,-.5) arc(-90:-270: .25 and .5) node[midway, left]{$c_1$};
  \draw[dashed] (0,.5) arc(90:-90: .25 and .5);
  \draw[red!50!yellow, postaction={decorate}] (2,.5) arc(90:270: .25 and .5) node[midway, left]{$b_1$};
  \draw[red!50!yellow, dashed] (2,.5) arc(90:-90: .25 and .5);
  \draw[red!50!yellow, postaction={decorate}] (4,.5) arc(90:270: .25 and .5) node[midway, left]{$b_2$};
  \draw[red!50!yellow, dashed] (4,.5) arc(90:-90: .25 and .5);
  \draw[red!50!yellow, postaction={decorate}] (6,.5) arc(90:270: .25 and .5) node[midway, left]{$b_3$};
  \draw[red!50!yellow, dashed] (6,.5) arc(90:-90: .25 and .5);
  \draw[postaction={decorate}] (8,.5) arc(90:270: .25 and .5) node[midway, left]{$c_6$};
  \draw (8,.5) arc(90:-90: .25 and .5);
  
  \draw (.5,1) arc(180:0: .5 and .25) node[midway, above]{$c_2$};
  \draw[postaction={decorate}] (.5,1) arc(-180:0: .5 and .25);
  \draw (2.5,1) arc(180:0: .5 and .25)node[midway, above]{$c_3$};
  \draw[postaction={decorate}] (2.5,1) arc(-180:0: .5 and .25);
  \draw (4.5,1) arc(180:0: .5 and .25)node[midway, above]{$c_4$};
  \draw[postaction={decorate}] (4.5,1) arc(-180:0: .5 and .25);
  \draw (6.5,1) arc(180:0: .5 and .25)node[midway, above]{$c_5$};
  \draw[postaction={decorate}] (6.5,1) arc(-180:0: .5 and .25);
   
  \draw (0,.5) to[out=0,in=-90] (.5,1);
  \draw (1.5,1) to[out=-90,in=180] (2,.5);
  \draw (0,-.5) to[out=0,in=180] (2,-.5);
  
  \draw (2,.5) to[out=0,in=-90] (2.5,1);
  \draw (3.5,1) to[out=-90,in=180] (4,.5);
  \draw (2,-.5) to[out=0,in=180] (4,-.5);
  
  \draw (4,.5) to[out=0,in=-90] (4.5,1);
  \draw (5.5,1) to[out=-90,in=180] (6,.5);
  \draw (4,-.5) to[out=0,in=180] (6,-.5);
  
  \draw (6,.5) to[out=0,in=-90] (6.5,1);
  \draw (7.5,1) to[out=-90,in=180] (8,.5);
  \draw (6,-.5) to[out=0,in=180] (8,-.5);
\end{tikzpicture}
}
\caption{The case of a 6-punctured sphere: The simple closed curves $b_1,b_2,b_3$ and the peripheral curves $c_1,\ldots,c_6$. }
\label{fig:curves-b_i}
\end{center}
\end{figure}
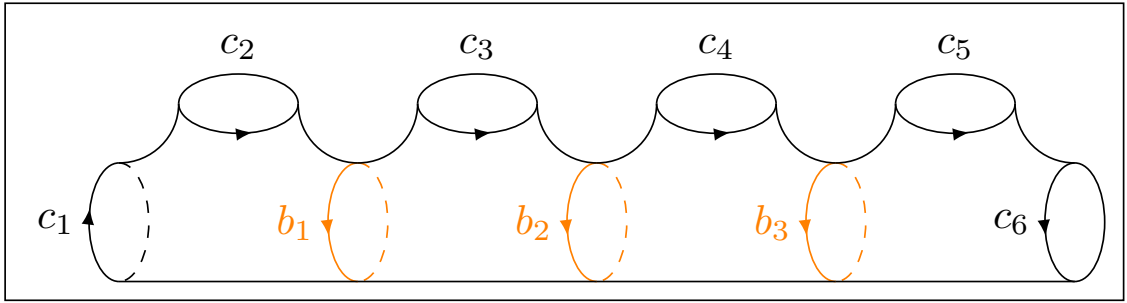

Let $[\phi]$ denote the conjugacy class of a Deroin-Tholozan representation $\phi\colon\pi_1(\Sigma_n)\to G$. By definition of the Deroin-Tholozan relative character variety, $\phi(c_i)$ is elliptic and satisfies $\vartheta(\phi(c_i))=\alpha_i$ for every $i=1,\ldots,n$. Let $$C_1(\phi),\ldots,C_n(\phi)\in\Hyp$$ be the fixed points of $\phi(c_1),\ldots,\phi(c_n)$, respectively. Proposition ~\ref{prop:totally-elliptic} says that $\phi(b_i)$ is elliptic for all $i=1,\ldots,n-3$. Let $$B_1(\phi),\ldots,B_{n-3}(\phi)\in \Hyp$$ be the fixed points of $\phi(b_1),\ldots,\phi(b_{n-3})$, respectively. We emphasize that those fixed points are associated to the representation $\phi$ and not to its conjugacy class $[\phi]$. A different representative of the class $[\phi]$ leads to a different set of fixed points. However, for $A\in G$, it holds that $C_i(A\phi A^{-1})=A\cdot C_i(\phi)$ and $B_i(A\phi A^{-1})=A\cdot B_i(\phi)$. This observation motivates the following. Let $\Hyp^n=\Hyp\times\ldots\times\Hyp$. We introduce the topological quotient $(\Hyp^n\times \Hyp^{n-3})/G$ where $G$ acts diagonally on $\Hyp^n\times \Hyp^{n-3}$. We refer to it as \textit{the moduli space of point configurations} in $\Hyp$. It allows for the definition of a map
\[
\mathfrak P\colon \dtrelcharvar\longrightarrow (\Hyp^n\times \Hyp^{n-3})/G
\] 
that sends $[\phi]$ to the equivalence class of the points $(C_1(\phi),\ldots,C_n(\phi),B_1(\phi),\ldots,B_{n-3}(\phi))$ in the moduli space of point configurations. The map $\mathfrak P$ is injective because a Deroin-Tholozan representation $\phi$ is entirely determined by the fixed points of $\phi(c_1),\ldots,\phi(c_n)$ (recall that the angles of rotation $\alpha_1,\ldots,\alpha_n$ are fixed parameters). Let
\[
\ChainTriang\subset (\Hyp^n\times \Hyp^{n-3})/G
\]
denote the image of the map $\mathfrak P$. The inverse map
\[
\mathfrak P^{-1}\colon \ChainTriang\longrightarrow \dtrelcharvar
\]
maps an equivalence class of points $(C_1,\ldots,C_n,B_1,\ldots,B_{n-3})$ to the conjugacy class of the representation $\phi\colon \pi_1(\Sigma_n)\to G$ that sends each generator $c_i$ of $\pi_1(\Sigma_n)$ to the rotation of angle $\alpha_i$ around $C_i$.

The notation $\ChainTriang$ for the image of $\mathfrak P$ is an abbreviation of \emph{chain of triangles} and is motivated by the following construction. Let $(C_1,\ldots,C_n,B_1,\ldots,B_{n-3})$ be a configuration of points in $\Hyp^n\times\Hyp^{n-3}$ whose isometry class lies in $\ChainTriang$. For convenience, we let $B_0\coloneqq C_1$ and $B_{n-2}\coloneqq C_n$. For every $i=0,\ldots,n-3$, we consider the oriented geodesic triangle $$\Delta_i\coloneqq\Delta(B_i,C_{i+2},B_{i+1})$$ in the upper half-plane, see Figure ~\ref{fig:parameters_symplectom}. The triangles $\Delta_i$ and $\Delta_{i+1}$ share the common vertex $B_i$. The geometric quantities associated to the triangles $\Delta_i$, such as their area or interior angles, are invariant of the isometry class of $(C_1,\ldots,C_n,B_1,\ldots,B_{n-3})$. 
We refer to $(\Delta_0,\ldots,\Delta_{n-3})$ as a \emph{chain of triangles}. We note that the term ``necklace" was used in 
\cite[§0.3]{DeTh19} to hint at the construction; we will, however, stick to ``chain of triangles".  Chains of triangles constitute the polygonal model for the Deroin-Tholozan relative character variety.

\begin{figure}[h]
\begin{center}
\resizebox{\textwidth}{!}{%
\begin{tikzpicture}[framed,font=\sffamily,decoration={
    markings,
    mark=at position 1 with {\arrow{>}}}]]
    
\node[anchor=south west,inner sep=0] at (0,0) {\includegraphics[width=\textwidth]{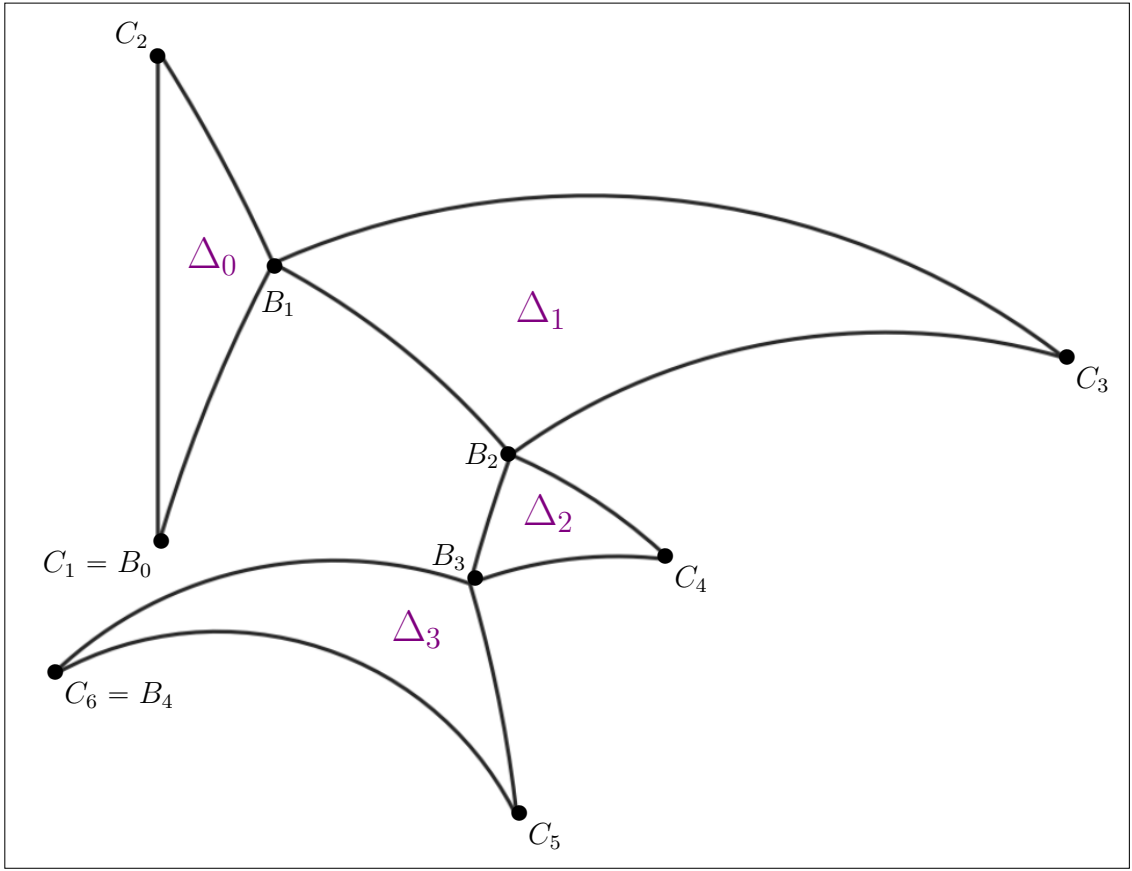}};

\draw (3.55,8) node{\huge $\bullet$};
\draw (3.6,7.5) node{\Large $B_1$};
\draw (6.75,5.4) node{\huge $\bullet$} node[left]{\Large $B_2$};
\draw (6.3,3.7) node{\huge $\bullet$} node[above left]{\Large $B_3$};

\draw (2,4.2) node{\huge $\bullet$} node[below left]{\Large $C_1=B_0$};
\draw (1.95,10.9) node{\huge $\bullet$} node[above left]{\Large $C_2$};
\draw (14.4,6.75) node{\huge $\bullet$} node[below right]{\Large $C_3$};
\draw (8.9,4) node{\huge $\bullet$} node[below right]{\Large $C_4$};
\draw (6.9,.45) node{\huge $\bullet$} node[below right]{\Large $C_5$};
\draw (.55,2.4) node{\huge $\bullet$} node[below right]{\Large $C_6=B_4$};

\draw[violet] (2.7,8.15) node{\huge $\Delta_0$};
\draw[violet] (7.2,7.4) node{\huge $\Delta_1$};
\draw[violet] (7.3,4.6) node{\huge $\Delta_2$};
\draw[violet] (5.5,3) node{\huge $\Delta_3$};
\end{tikzpicture}
}
\end{center}
\caption{Example of a configuration of the fixed points and the associated chain of triangles in the case $n=6$.}\label{fig:parameters_symplectom}
\end{figure}

We advertise two results to convince the reader about the pertinence of the polygonal model for $\dtrelcharvar$. The first concerns angle coordinates which can be read directly from the chain of triangles. We prove below in Section ~\ref{sec:proof-of-theorem} that the angles between the geodesic rays $\overrightarrow{B_iC_{i+2}}$ and $\overrightarrow{B_iC_{i+1}}$ are angle coordinates for the Hamiltonian torus action on $\dtrelcharvar$, see Figure ~\ref{fig:angle-area-parameters}. 

The second example concerns the action coordinates which also appear as geometric quantities in the chain of triangles. For $i=1,\ldots,n-3$, we write
\begin{equation}\label{eq:defn-beta_i}
\beta_i([\phi])\coloneqq\vartheta_{b_i}([\phi])=\vartheta(\phi(b_i))
\end{equation}
for the angle of rotation of the elliptic element $\phi(b_i)\in G$. Let further, in accordance to our previous conventions, $\beta_0([\phi])\coloneqq 2\pi-\alpha_1$ and $\beta_{n-2}([\phi])\coloneqq\alpha_n$. The functions $\beta_i$ are the components of the moment map for the torus action defined by the twist flows along the curves $b_i$. We prove the following below in Subsection ~\ref{sec:the-general-case}, see Figure ~\ref{fig:admissible_chain}.

\begin{lem}\label{lem:interior-angles-chain-of-triangles}
Let $\Delta_i$ be a non-degenerate triangle in the chain built from $\mathfrak P ([\phi])$ for some $[\phi]\in\dtrelcharvar$. The following holds: The triangle $\Delta_i$ is clockwise oriented and the interior angle of $\Delta_i$ at $B_i$ equals $\beta_i([\phi])/2$, the interior angle at $C_{i+2}$ equals $\pi-\alpha_{i+2}/2$ and the interior angle at $B_{i+1}$ equals $\pi-\beta_{i+1}([\phi])/2$.
\end{lem}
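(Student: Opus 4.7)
The starting point is the algebraic identity $b_i = c_{i+2}\,b_{i+1}$ in $\pi_1(\Sigma_n)$, which follows directly from $b_j = c_{j+1}^{-1}\cdots c_1^{-1}$. Applying $\phi$ and using Proposition \ref{prop:totally-elliptic}, this realizes the elliptic rotation $\phi(b_i)$ at $B_i$ as the composition $\phi(c_{i+2})\circ\phi(b_{i+1})$ of the two elliptic rotations at $C_{i+2}$ and $B_{i+1}$ with angles $\alpha_{i+2}$ and $\beta_{i+1}([\phi])$; the three fixed points $B_i$, $C_{i+2}$, $B_{i+1}$ are the vertices of $\Delta_i$.

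The main tool is the standard decomposition of an elliptic element of $G$ as a product of two reflections across geodesics through its fixed point, with the angle between the two axes at that point equal to half the rotation angle. I would apply it by using the geodesic $\overline{B_{i+1}C_{i+2}}$ as a common axis, writing
\[
\phi(b_{i+1}) = s_\eta\circ s_{\overline{B_{i+1}C_{i+2}}}, \qquad \phi(c_{i+2}) = s_\mu\circ s_{\overline{B_{i+1}C_{i+2}}},
\]
where $\eta$ is the geodesic through $B_{i+1}$ making oriented angle $\beta_{i+1}([\phi])/2$ with $\overline{B_{i+1}C_{i+2}}$, and $\mu$ is the analogous geodesic through $C_{i+2}$ making oriented angle $\alpha_{i+2}/2$. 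The shared axis cancels and gives $\phi(c_{i+2})\circ\phi(b_{i+1}) = s_\mu\circ s_\eta$. Since this must equal the rotation $\phi(b_i)$ at $B_i$, the axes $\mu$ and $\eta$ necessarily meet at $B_i$, and therefore coincide with the geodesic segments $\overline{B_i C_{i+2}}$ and $\overline{B_i B_{i+1}}$, which are the remaining two sides of $\Delta_i$. By construction, at each vertex the interior angle of $\Delta_i$ is then either the corresponding half-rotation angle or its $\pi$-complement, and non-degeneracy of $\Delta_i$ ensures each lies in $(0,\pi)$.

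The final step is to decide which of these two options is realized at each vertex and to fix the orientation of $\Delta_i$. I would do so by invoking the Deroin-Tholozan volume constraint $\vol([\phi]) = -\lambda$ (Theorem \ref{thm:volume} and Definition \ref{def:scaling-factor}). Evaluating the area cocycle \eqref{eq:cocycle-c} on the fundamental-class representative \eqref{eq:fundamental-class}, and using the fixed points $B_j$ (together with appropriate coboundary adjustments) as convenient basepoints, the volume should telescope into a signed sum of the areas of the $\Delta_i$. On the other hand, Gauss-Bonnet applied to the candidate angle assignment $(\beta_i/2,\ \pi-\alpha_{i+2}/2,\ \pi-\beta_{i+1}/2)$ yields
\[
2\,\mathrm{Area}(\Delta_i) = \alpha_{i+2} + \beta_{i+1}([\phi]) - \beta_i([\phi]) - 2\pi,
\]
and these telescope (using $\beta_0 = 2\pi - \alpha_1$ and $\beta_{n-2} = \alpha_n$) to $\lambda$. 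The only way to reconcile this with $\vol([\phi]) = -\lambda$ through the cocycle-signed-area identity is for every $\Delta_i$ to be clockwise-oriented (signed area equal to $-\mathrm{Area}(\Delta_i)$), and this single choice of orientation simultaneously selects the stated combination among the two interior-angle candidates at each vertex. The hard part will be the sign bookkeeping in this last step: setting up the cocycle computation carefully enough so that the identity between $\vol(\phi)$ and the signed areas of the $\Delta_i$ has precisely the right sign. The reflection-decomposition argument of the middle paragraph is standard and should follow from the $\psl$ formulae collected in Appendix \ref{appendix-PSL2R}.
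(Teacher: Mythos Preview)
Your approach is essentially the paper's: the reflection decomposition you describe is exactly the proof of Lemma~\ref{lem:trichotomy-case-n=3} (the $n=3$ trichotomy), and your telescoping volume argument is the combination of Lemma~\ref{lem:volume-case-n=3} with additivity (Theorem~\ref{thm:volume}(3)). The paper simply factors these out as separate lemmas for the pair of pants $P_i$ with boundary $(b_i^{-1},c_{i+2},b_{i+1})$ and angle vector $(2\pi-\beta_i,\alpha_{i+2},\beta_{i+1})$, then cites them.

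Two small points. First, your reflection ordering does not cancel: with $\phi(b_{i+1})=s_\eta\, s_{\overline{B_{i+1}C_{i+2}}}$ and $\phi(c_{i+2})=s_\mu\, s_{\overline{B_{i+1}C_{i+2}}}$ one gets $\phi(c_{i+2})\phi(b_{i+1})=s_\mu\, s_{\overline{B_{i+1}C_{i+2}}}\, s_\eta\, s_{\overline{B_{i+1}C_{i+2}}}$, not $s_\mu s_\eta$. You need the common axis on opposite sides, e.g.\ $\phi(b_{i+1})=s_{\overline{B_{i+1}C_{i+2}}}\, s_\eta$; compare the proof of Lemma~\ref{lem:trichotomy-case-n=3}. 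Second, your orientation step (``the only way to reconcile\ldots'') does work, but the paper phrases it more crisply via the relative Euler class: each restriction $[\phi\restriction_{P_i}]$ has $k_i\in\{1,2\}$ by Table~\ref{tab:configurations-cae-n=3}, and additivity forces $\sum k_i=2(n-2)$, hence every $k_i=2$, which selects the clockwise case uniformly. This is equivalent to your telescoping (switching one triangle to anti-clockwise shifts the sum by exactly $2\pi$), but avoids having to set up the cocycle computation from scratch.
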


The remainder of this section is dedicated to the study of the possible configurations of points inside $\ChainTriang$. We want to find sufficient geometrical conditions for a chain of triangles to be a configuration of fixed points associated to a Deroin-Tholozan representation. We start with the case $n=3$ and then explain how the cases $n\geq 4$ are built from the case $n=3$. 

\subsection{The case of the thrice-punctured sphere}

Assume that $n=3$ and let $\Sigma_3$ be an oriented and connected sphere with three labelled punctures. Let $\alpha=(\alpha_1,\alpha_2,\alpha_3)\in (0,2\pi)^3$ be a triple of angles. At this stage, we make no particular assumption concerning a lower bound for $\alpha_1+\alpha_2+\alpha_3$. Let $[\phi]\in \relcharvarthree$. The following lemma describes the possible configurations of the fixed points $C_1$, $C_2$, $C_3$ of $\phi(c_1)$, $\phi(c_2)$, $\phi(c_3)$. The lemma is transcribed from
\cite{DeTh19} and the proof is included for completeness.

\begin{lem}[\cite{DeTh19}]\label{lem:trichotomy-case-n=3}
The points $C_1$, $C_2$, $C_3\in \Hyp$ are arranged in one of the following three configurations:
\begin{enumerate}
\item All three points coincide and $\alpha_1+\alpha_2+\alpha_3\in \{2\pi,4\pi\}$.
\item The points form a non-degenerate triangle $\Delta(C_1,C_2,C_3)$ which is oriented clockwise and has interior angles $\pi-\alpha_i/2$ at $C_i$ for $i=1,2,3$. Moreover, $\alpha_1+\alpha_2+ \alpha_3>4\pi$.
\item The points form a non-degenerate triangle $\Delta(C_1,C_2,C_3)$ which is oriented anti-clockwise and has interior angles $\alpha_i/2$ at $C_i$ for $i=1,2,3$. Moreover, $\alpha_1+\alpha_2+ \alpha_3<2\pi$.
\end{enumerate}
\end{lem}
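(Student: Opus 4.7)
My plan is to exploit the defining relation $\phi(c_1)\phi(c_2)\phi(c_3) = \Id$ together with the classical factorization of an elliptic isometry of $\Hyp$ as the product of two reflections across geodesics meeting at its fixed point, with the signed angle between the axes equal to half the rotation angle. Choosing the axes to coincide between consecutive factors reduces the defining relation to a combinatorial cancellation, and translates the angle datum $\alpha$ into explicit geometric constraints on the configuration $(C_1,C_2,C_3)$.

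I would first dispatch the degenerate configurations. If $C_1 = C_2 = C_3 =: C$, then the three $\phi(c_i)$ lie in the rotation subgroup fixing $C$, which is isomorphic to $\R/2\pi\Z$, and their product is the rotation by $\alpha_1 + \alpha_2 + \alpha_3 \pmod{2\pi}$; triviality forces $\alpha_1+\alpha_2+\alpha_3 \in \{2\pi,4\pi\}$ since each $\alpha_i \in (0,2\pi)$, yielding case (1). If exactly two coincide, say $C_1 = C_2 \neq C_3$, then $\phi(c_1)\phi(c_2)$ is a rotation about $C_1$ which must agree with $\phi(c_3)^{-1}$, a rotation about the distinct point $C_3$; since a nontrivial hyperbolic rotation has a unique fixed point, both factors would have to be the identity, contradicting $\alpha_3 \in (0,2\pi)$.

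For the remaining configurations, with $C_1,C_2,C_3$ pairwise distinct, I write $L_{ij}$ for the geodesic through $C_i$ and $C_j$ and propose the factorizations
\[
\phi(c_1) = \rho_{L_{31}}\rho_{L_{12}}, \qquad \phi(c_2) = \rho_{L_{12}}\rho_{L_{23}}, \qquad \phi(c_3) = \rho_{L_{23}}\rho_{L_{31}},
\]
which are admissible because each pair of axes meets exactly at the corresponding fixed point, and two successive cancellations make $\phi(c_1)\phi(c_2)\phi(c_3)$ telescope to the identity, confirming compatibility with the relation in $\pi_1(\Sigma_3)$. If $C_1,C_2,C_3$ were collinear, the three geodesics $L_{ij}$ would coincide and each $\phi(c_i)$ would reduce to the identity, contradicting $\alpha_i \neq 0$; hence the triangle $\Delta(C_1,C_2,C_3)$ is non-degenerate. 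The rotation angle of $\rho_{L_{31}}\rho_{L_{12}}$ about $C_1$ equals twice the signed anti-clockwise angle $\theta_1$ from the tangent of $L_{12}$ at $C_1$ pointing toward $C_2$ to the tangent of $L_{31}$ at $C_1$ pointing toward $C_3$, and analogously at the other vertices, so $\alpha_i \equiv 2\theta_i \pmod{2\pi}$.

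Finally, $\theta_i$ is related to the interior angle $\beta_i \in (0,\pi)$ of $\Delta(C_1,C_2,C_3)$ by $\theta_i = \beta_i$ when the triangle is anti-clockwise and $\theta_i = 2\pi - \beta_i$ when it is clockwise. Solving $\alpha_i \equiv 2\theta_i \pmod{2\pi}$ inside $(0,\pi)$ yields $\beta_i = \alpha_i/2$ in the anti-clockwise case and $\beta_i = \pi - \alpha_i/2$ in the clockwise case; the three local choices must agree because the triangle carries a single orientation. Applying the Gauss-Bonnet identity $\beta_1 + \beta_2 + \beta_3 = \pi - \area(\Delta)$ with strictly positive area then converts these two alternatives into $\alpha_1+\alpha_2+\alpha_3 < 2\pi$ (case (3)) and $\alpha_1+\alpha_2+\alpha_3 > 4\pi$ (case (2)) respectively. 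The step I expect to require the most care is this orientation bookkeeping --- ruling out any mixture of the two local options $\beta_i \in \{\alpha_i/2,\, \pi-\alpha_i/2\}$ across the three vertices by tracking tangent directions consistently through the reflection factorization.
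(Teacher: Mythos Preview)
Your degenerate-case analysis is sound and matches the paper's. The gap is in the non-degenerate case: you \emph{propose} the factorizations
\[
\phi(c_1)=\rho_{L_{31}}\rho_{L_{12}},\qquad \phi(c_2)=\rho_{L_{12}}\rho_{L_{23}},\qquad \phi(c_3)=\rho_{L_{23}}\rho_{L_{31}},
\]
and justify them only by observing that each product fixes the correct $C_i$ and that the three products telescope to the identity. But these two checks do not force the factorization to agree with the given $\phi$. The element $\rho_{L_{31}}\rho_{L_{12}}$ is indeed a rotation about $C_1$, but its rotation angle is twice the interior angle of $\Delta(C_1,C_2,C_3)$ at $C_1$ --- which is exactly the quantity you are trying to compute. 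Equating it to $\alpha_1$ is what you want to \emph{prove}, so invoking it here is circular. Put differently, your three products define \emph{some} representation of $\pi_1(\Sigma_3)$ with fixed points $C_1,C_2,C_3$; you have not shown it is $\phi$.

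The fix is precisely the paper's maneuver, and it is a small rearrangement of your idea. Factor only $\phi(c_1)$ and $\phi(c_2)$ through the shared side $L_{12}$: write $\phi(c_1)=\rho_{\ell_1}\rho_{L_{12}}$ and $\phi(c_2)=\rho_{L_{12}}\rho_{\ell_2}$, where $\ell_1$ is the line through $C_1$ obtained by rotating $L_{12}$ by $\alpha_1/2$ and similarly for $\ell_2$. These identities hold by construction, with no assumption on the triangle. Then $\phi(c_3)=(\phi(c_1)\phi(c_2))^{-1}=\rho_{\ell_2}\rho_{\ell_1}$, and since $\phi(c_3)$ is elliptic with fixed point $C_3$, the lines $\ell_1,\ell_2$ must meet at $C_3$; hence $\ell_1=L_{13}$ and $\ell_2=L_{23}$. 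Only at this point is the interior angle at $C_1$ identified with either $\alpha_1/2$ or $\pi-\alpha_1/2$, and the orientation dichotomy follows without any residual bookkeeping across vertices. The step you flagged as ``requiring the most care'' (ruling out a mixture of the two local options) then disappears: the global orientation of the triangle decides all three at once.
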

\begin{proof}
Assume that $C_i=C_j$ for some $i\neq j$. Let $k\in \{1,2,3\}$ be the third index. Up to permutation of $i$ and $j$, it holds that $\phi(c_k)=\phi(c_i)^{-1}\phi(c_j)^{-1}$ because $c_1c_2c_3=1$ by assumption. So, $\phi(c_k)$ fixes both $C_k$ and $C_i=C_j$. Therefore, all three points must coincide because $\phi(c_k)$ is elliptic. It means that $\phi(c_1),\phi(c_2)$ and $\phi(c_3)$ are rotations about the same point. Since their product is the identity, $\alpha_1+\alpha_2+\alpha_3$ is an integer multiple of $2\pi$.

Assume now that $C_1,C_2$ and $C_3$ are distinct. Let $\zeta_3$ be the geodesic through $C_1$ and $C_2$. Let $\zeta_2$ be the image of $\zeta_3$ by a clockwise rotation of $\pi-\alpha_1/2$ around $C_1$. Let $\zeta_1$ be the image of $\zeta_3$ by an anti-clockwise rotation of $\pi-\alpha_2/2$ around $C_2$, see Figure ~\ref{fig:triangles-case-n=3}. We denote by $\tau_i\colon \Hyp\to\Hyp$ the reflection through the geodesic $\zeta_i$. By construction, $\phi(c_1)=\tau_2\tau_3$ and $\phi(c_2)=\tau_3\tau_1$. Hence, $\phi(c_3)=\phi(c_2)^{-1}\phi(c_1)^{-1}=\tau_1\tau_2$. Since $\phi(c_3)$ fixes $C_3$, the geodesics $\zeta_1$ and $\zeta_2$ must intersect at $C_3$.

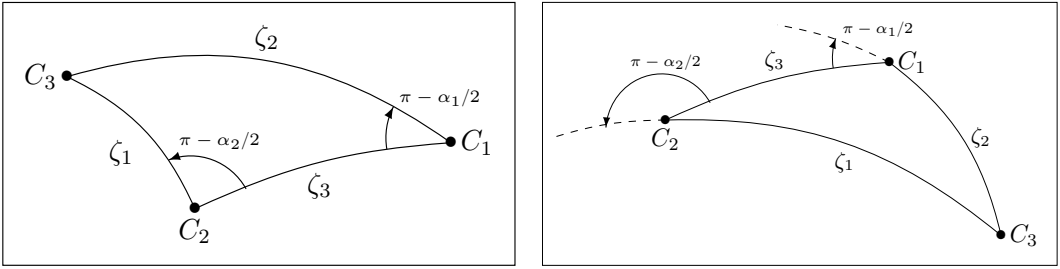
\begin{figure}
\resizebox{7cm}{3.5cm}{
\begin{tikzpicture}[framed,font=\sffamily,scale=.8,decoration={
    markings,
    mark=at position 1 with {\arrow{>}}}]]
 \path (0,0) coordinate (A) (3,-0.7) coordinate (B) (2,-2) coordinate (C) (6,-1) coordinate (D);
 \draw (A) node[left]{$C_3$} node{$\bullet$} to[bend left=25] node[above]{$\zeta_2$} 
 (D) node[right]{$C_1$} node{$\bullet$} to[bend right=10] node[below]{$\zeta_3$}
 (C) node[below]{$C_2$} node{$\bullet$} to[bend right=20] node[below left]{$\zeta_1$} cycle;
 
 \draw[postaction={decorate}] (5,-1.1) arc (190:152:1) node[near end, above right]{\tiny $\pi-\alpha_1/2$};
 \draw[postaction={decorate}] (2.8,-1.7) arc (30:110:1) node[above right]{\tiny $\pi-\alpha_2/2$};
 
\end{tikzpicture}
}
\resizebox{7cm}{3.5cm}{
\begin{tikzpicture}[framed,font=\sffamily,scale=.8,decoration={
    markings,
    mark=at position 1 with {\arrow{>}}}]]
 \path (8,-4) coordinate (A) (3,-0.7) coordinate (B) (2,-2) coordinate (C) (6,-1) coordinate (D);
 \draw (A) node[right]{$C_3$} node{$\bullet$} to[bend right=20] node[right]{\small $\zeta_2$} 
 (D) node[right]{$C_1$} node{$\bullet$} to[bend right=10] node[above]{\small $\zeta_3$}
 (C) node[below]{$C_2$} node{$\bullet$} to[bend left=20] node[below]{\small $\zeta_1$} cycle;
 
 \draw[dashed] (D) to[bend right=8] (4,-0.35);
 \draw[dashed] (C) to[bend right=8] (0,-2.3);
 
 \draw[postaction={decorate}] (5,-1.1) arc (190:160:1) node[near end, above right]{\tiny $\pi-\alpha_1/2$};
 \draw[postaction={decorate}] (2.8,-1.7) arc (30:176:1);
 \draw (2,-1) node{\tiny $\pi-\alpha_2/2$};
 
\end{tikzpicture}
}

\caption{The two non-degenerate configurations of fixed points. On the left: the configuration where $\Delta(C_1,C_2,C_3)$ is clockwise oriented and the interior angles are $\pi-\alpha_i/2$. On the right: the configuration where $\Delta(C_1,C_2,C_3)$ is anti-clockwise oriented and the interior angles are $\alpha_i/2$.}\label{fig:triangles-case-n=3}
\end{figure}

We distinguish two cases according to the orientation of $\Delta(C_1,C_2,C_3)$. 
\begin{itemize}
\item First, assume that the triangle is clockwise oriented. It that case, $\tau_2\tau_3$ is a clockwise rotation around $C_1$ of twice the interior angle at $C_1$. Since $\phi(c_1)$ is by definition an anti-clockwise rotation of angle $\alpha_1$ around $C_1$ and $\phi(c_1)=\tau_2\tau_3$, the interior angle at $C_1$ must be $\pi-\alpha_1/2$. For the same reason, the interior angles at $C_2$ and $C_3$ are $\pi-\alpha_2/2$ and $\pi-\alpha_3/2$, respectively. The positive area of the triangle $\Delta(C_1,C_2,C_3)$ is equal to the angle defect:
\[
\pi-\sum_{i=1}^3 (\pi-\alpha_i/2)=\frac{1}{2}(\alpha_1+\alpha_2+\alpha_3-4\pi).
\]
We conclude that $\alpha_1+\alpha_2+\alpha_3>4\pi$.
\item Conversely, if the triangle is anti-clockwise oriented, then the same argument shows that the interior angle at $C_i$ is $\alpha_i/2$. In this case, the positive area of the triangle $\Delta(C_1,C_2,C_3)$ is equal to
\[
\pi-\sum_{i=1}^3 \alpha_i/2=\frac{1}{2}(2\pi-\alpha_1-\alpha_2-\alpha_3).
\]
We conclude that $\alpha_1+\alpha_2+\alpha_3<2\pi$.
\end{itemize}
\end{proof}

A consequence of Lemma ~\ref{lem:trichotomy-case-n=3} is that $\relcharvarthree$ is empty whenever $\alpha_1+\alpha_2+\alpha_3\in (2\pi,4\pi)$. The next lemma shows that the volume of $[\phi]$ is directly proportional to the signed area of the triangle $\Delta(C_1,C_2,C_3)$.

\begin{lem}\label{lem:volume-case-n=3}
Let $[\phi]\in \relcharvarthree$. Then
\[
\vol([\phi])=-2\cdot [\Delta(C_1,C_2,C_3)].
\]
\end{lem}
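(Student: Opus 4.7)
\emph{Plan.} The plan is to compute $\vol(\phi)$ directly from Definition \ref{def:volume-representation} in the bar resolution, using the explicit fundamental 2-chain $e$ from \eqref{eq:fundamental-class}. For $n=3$, this chain reads $e=(c_1,c_2)+(c_1c_2,c_3)+(1,1)$, and because $\phi(c_1c_2c_3)=\Id$ the last two summands contribute degenerate triangles to $\phi^\star c$. So, for any basepoint $z\in\Hyp$,
\[
\phi^\star c(e)=[\Delta(z,\phi(c_1)z,\phi(c_1c_2)z)].
\]

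To lift $\phi^\star\kappa$ to the relative bounded cohomology $H^2_b(\pi_1(\Sigma_3),\partial\pi_1(\Sigma_3);\R)$, I produce bounded primitives $\eta_i$ of $\phi^\star c$ on each peripheral subgroup $\langle c_i\rangle$. Using the fixed point $C_i$ of the elliptic element $\phi(c_i)$, the candidate is
\[
\eta_i(c_i^m):=[\Delta(z,\phi(c_i)^m z,C_i)].
\]
Boundedness follows from the universal bound $\pi$ on hyperbolic triangle areas, and the identity $d\eta_i=\phi^\star c\restriction_{\langle c_i\rangle}$ follows from the tetrahedron cocycle identity
\[
[\Delta(w_0,w_1,w_2)]-[\Delta(w_0,w_1,w_3)]+[\Delta(w_0,w_2,w_3)]-[\Delta(w_1,w_2,w_3)]=0
\]
applied to $(w_0,w_1,w_2,w_3)=(z,\phi(c_i)z,\phi(c_i)^2 z,C_i)$, together with the $G$-invariance of $[\cdot]$ and $\phi(c_i)C_i=C_i$. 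The relative cochain $\bigl(\phi^\star c,(\eta_1,\eta_2,\eta_3)\bigr)$ thus represents $j^{-1}(\phi^\star\kappa)$.

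Evaluating this relative cocycle on the relative fundamental class $(e,c_1,c_2,c_3)$ yields $\vol(\phi)$ as a signed combination of $\phi^\star c(e)$ and the $\eta_i(c_i)$. Choosing the basepoint $z=C_1$ kills both $\phi^\star c(e)$ and $\eta_1(c_1)$. The two remaining contributions $\eta_2(c_2)$ and $\eta_3(c_3)$ can then be simplified using $\phi(c_3)=\phi(c_2)^{-1}\phi(c_1)^{-1}$ and $G$-invariance (to move the point $\phi(c_2)^{-1}C_1$ into play), and a single further application of the tetrahedron identity to the configuration $(\phi(c_2)^{-1}C_1,C_1,C_2,C_3)$ telescopes the expression to $-2[\Delta(C_1,C_2,C_3)]$. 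As a sanity check, Gauss--Bonnet computes the area of the hyperbolic triangle in each of the three cases of Lemma \ref{lem:trichotomy-case-n=3} as an angle defect; combined with the integer-valued formula $\vol=2\pi k-\sum\alpha_i$ of Theorem \ref{thm:volume}(4), the equality $\vol(\phi)=-2[\Delta(C_1,C_2,C_3)]$ is recovered in each case (with $k=2$ in the clockwise case, $k=1$ in the anti-clockwise case, and $k\in\{1,2\}$ in the degenerate case).

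The main obstacle is sign accounting. The bar coboundary, the connecting homomorphism in the long exact sequence of the pair $(\pi_1(\Sigma_3),\partial\pi_1(\Sigma_3))$, the pairing between the relative chain and cochain complexes, and the orientation convention on hyperbolic signed area all contribute signs; keeping them consistent requires care. The factor $-2$ in the final answer reflects that the signed area $[\Delta(C_1,C_2,C_3)]$ is effectively counted twice after the cancellations in the last step.
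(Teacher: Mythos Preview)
Your plan is essentially the paper's own proof: same bar-resolution computation, same primitives (your $\eta_i(c_i^m)=[\Delta(z,\phi(c_i)^mz,C_i)]$ equals the paper's $k_i(c_i^m)=[\Delta(C_i,z,\phi(c_i)^mz)]$ after a cyclic permutation of vertices), same choice of basepoint $z=C_1$, and the same use of $\phi(c_3)C_1=\phi(c_2)^{-1}C_1$ followed by a tetrahedron identity.

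One point to tighten: the tetrahedron identity applied to $(\phi(c_2)^{-1}C_1,C_1,C_2,C_3)$ does not by itself telescope to $-2[\Delta(C_1,C_2,C_3)]$. Writing $P=\phi(c_2)^{-1}C_1$, it only reduces $\vol(\phi)$ to
\[
[\Delta(P,C_2,C_3)]-[\Delta(C_1,C_2,C_3)],
\]
and you still need $[\Delta(P,C_2,C_3)]=-[\Delta(C_1,C_2,C_3)]$. The paper obtains this from the reflection decomposition $\phi(c_2)=\tau_3\tau_1$ established in Lemma~\ref{lem:trichotomy-case-n=3}: since $\tau_1$ fixes $C_3$ and $\tau_3$ fixes $C_1,C_2$, the triangle $\Delta(C_1,C_2,\phi(c_2)C_3)=\Delta(C_1,C_2,\tau_3C_3)$ is the mirror image of $\Delta(C_1,C_2,C_3)$ under the orientation-reversing $\tau_3$, hence has opposite signed area. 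Your sanity check via Gauss--Bonnet and Theorem~\ref{thm:volume}(4) confirms the answer but is not an independent proof of this last step; you should invoke the reflection structure explicitly (and treat the degenerate case $C_1=C_2=C_3$ separately, where both sides are trivially zero).
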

\begin{proof}
The proof is an explicit computation of $\vol([\phi])$ from Definition ~\ref{def:volume-representation}. The computations are conducted in the bar resolution for group cohomology and use the explicit form of the fundamental class $[\pi_1(\Sigma_3),\partial \pi_1(\Sigma_3)]$ described in \eqref{eq:fundamental-class}. 

Let $z$ be a base point in $\Hyp$. We start by computing the preimage of the cocycle $\phi^\ast \kappa\in H^2_b(\pi_1(\Sigma_3);\R)$ under the isomorphism $j\colon H^2_b(\pi_1(\Sigma_3),\partial\pi_1(\Sigma_3);\R)\to H^2_b(\pi_1(\Sigma_3);\R)$. This means finding primitives for $\phi^\ast c\colon \pi_1(\Sigma_3)\times \pi_1(\Sigma_3)\to \R$  restricted to the subgroup $\langle c_i\rangle$ of $\pi_1(\Sigma_3)$, where $c$ is the cocycle defined in \eqref{eq:cocycle-c}. For $i=1,2,3$, consider the functions $k_i\colon \langle c_i\rangle \to \R$ defined by $$k_i(c_i)\coloneqq [\Delta(C_i,z,\phi(c_i)z)].$$ We claim that the functions $k_i$ are the desired primitives. By definiton of the bar complex, $k_i$ is a primitive for $\phi^\ast c$ restricted to $\langle c_i\rangle$ if for any two integers $a$ and $b$, it holds that $k_i(c_i^a)+k_i(c_i^b)-k_i(c_i^{a+b})=c(\phi(c_i)^a,\phi(c_i)^b)$. We compute $k_i(c_i^a)+k_i(c_i^b)-k_i(c_i^{a+b})$. This is, by definition of $k_i$, equal to
\[
[\Delta(C_i,z,\phi(c_i)^az)]+[\Delta(C_i,z,\phi(c_i)^bz)]-[\Delta(C_i,z,\phi(c_i)^{a+b}z)].
\]
Since $\phi(c_i)^a$ is an orientation-preserving isometry of the upper half-plane that fixes $C_i$, it holds that
\[
[\Delta(C_i,z,\phi(c_i)^bz)]=[\Delta(C_i,\phi(c_i)^az,\phi(c_i)^{a+b}z)].
\]
Now, we use the following formula. For any four points $A,B,C,D$ in $\Hyp$, we have
\begin{equation}\label{eq:formula-sums_area_four_points}
[\Delta(A,B,C)]+[\Delta(C,D,A)]=[\Delta(B,C,D)]+[\Delta(B,D,A)].
\end{equation}
Note that this property is equivalent to the cocycle formula \eqref{eq:area-cocycle-formula}. Formula \eqref{eq:formula-sums_area_four_points} can be obtained by double counting of the area of the quadrilateral $ABCD$.
\begin{center}
\begin{tikzpicture}[font=\sffamily,scale=.8]
 \path (0,0) coordinate (A) (4,1) coordinate (B) (2,-2) coordinate (C) (6,-1) coordinate (D);
 \draw[thick] (A) node[left]{$A$} to[bend left=25]  
 (D) node[right]{$C$} to[bend right=10]
 (C) node[below]{$D$} to[bend right=20] cycle;
 
 \draw[thick] (D) to[bend right=10] (B);
 \draw[thick] (A) to[bend left=10] (B);
 \draw[dashed] (C) to[bend left=20] (B);
 
 \draw (B) node[above]{$B$};
\end{tikzpicture}
\end{center}
Thus, with $A=C_i$, $B=z$, $C=\phi(c_i)^az$ and $D=\phi(c_i)^{a+b}z$, we deduce
\begin{align*}
k_i(c_i^a)+k_i(c_i^b)-k_i(c_i^{a+b})&=[\Delta(z,\phi(c_i)^az,\phi(c_i)^{a+b}z)]\\
&=c(\phi(c_i)^a,\phi(c_i)^b).
\end{align*}
This proves the claim. Hence
\[
j^{-1}(\phi^\ast\kappa)=[(\phi^\ast c,k_1,k_2,k_3)].
\]
Definition ~\ref{def:volume-representation} says that
\[
\vol([\phi])=[(\phi^\ast c,k_1,k_2,k_3)]\frown [\pi_1(\Sigma_3),\partial\pi_1(\Sigma_3)].
\]
Recall that $[\pi_1(\Sigma_3),\partial\pi_1(\Sigma_3)]$ is the homology class of the 2-chain $(e,c_1,c_2,c_3)$ where $e$ was explicitly described in \eqref{eq:fundamental-class}. Using the explicit expression of the cap product in the bar complex, see e.g.\ \cite[Proposition 5.8]{KaMi96}, we obtain
\begin{align}
\vol([\phi])&=(\phi^\ast c)(e)-k_1(c_1)-k_2(c_2)-k_3(c_3)\nonumber\\
&=[\Delta(z,\phi(c_1)z,\phi(c_1c_2)z)]+[\Delta(z,\phi(c_1c_2)z,\phi(c_1c_2c_3)z)]+[\Delta(z,z,z)]\nonumber\\
&\qquad -[\Delta(C_1,z,\phi(c_1)z)]-[\Delta(C_2,z,\phi(c_2)z)]-[\Delta(C_3,z,\phi(c_3)z)]\nonumber\\
&=[\Delta(z,\phi(c_1)z,\phi(c_1c_2)z)]-\sum_{i=1}^3 [\Delta(C_i,z,\phi(c_i)z)].\label{eq:65}
\end{align}
The volume is independent of the choice of the base point $z$, so we may as well assume $z=C_1$. After obvious cancellations, \eqref{eq:65} becomes
\begin{align*}
\vol([\phi])&= [\Delta(C_1,C_2,\phi(c_2)C_1)]+[\Delta(C_1,C_3,\phi(c_3)C_1)].
\end{align*}
Using $\phi(c_3)C_1=\phi(c_2)^{-1}C_1$, we further compute
\begin{align}
\vol([\phi])&=[\Delta(C_1,C_2,\phi(c_2)C_1)]+[\Delta(\phi(c_2)C_1,\phi(c_2)C_3,C_1)].\label{eq:63}
\end{align}
We make use of \eqref{eq:formula-sums_area_four_points} again. Letting $A=C_1$, $B=C_2$, $C=\phi(c_2)C_1$ and $D=\phi(c_2)C_3$, the relation \eqref{eq:63} becomes
\begin{align}
\vol([\phi])&=[\Delta(C_2,\phi(c_2)C_1,\phi(c_2)C_3)]+[\Delta(C_2,\phi(c_2)C_3,C_1)]\nonumber\\
&=-[\Delta(C_1,C_2,C_3)]+[\Delta(C_1,C_2,\phi(c_2)C_3)].\label{eq:96}
\end{align}
If $C_1=C_2=C_3$ then $\vol([\phi])=0$ by \eqref{eq:96}, and so $\vol([\phi])=-2[\Delta(C_1,C_2,C_3)]$ as desired. Otherwise, we know from the proof of Lemma ~\ref{lem:trichotomy-case-n=3} that all three points are distinct and $\phi(c_2)=\tau_3\tau_1$. Observe that the triangle $\Delta(C_1,C_2,\phi(c_2)C_3)$ is the image under $\tau_3$ of the triangle $\Delta(C_1,C_2,C_3)$ because $\tau_3$ fixes $C_1$ and $C_2$ and $\tau_1$ fixes $C_3$. Hence, for $\tau_3$ is orientation-reversing,
\[
[\Delta(C_1,C_2,\phi(c_2)C_3)]=-[\Delta(C_1,C_2,C_3)].
\]
and \eqref{eq:96} becomes $\vol([\phi])=-2[\Delta(C_1,C_2,C_3)]$. This finishes the proof of the lemma.
\end{proof}

We can compile the conclusions of Lemma ~\ref{lem:trichotomy-case-n=3} and Lemma ~\ref{lem:volume-case-n=3} into the following summary table, see Table ~\ref{tab:configurations-cae-n=3}.

\begin{table}[h]
\centering
  \begin{tabular}{ | c | c | c | c | }
    \hline
    angles & volume & relative Euler class & \makecell{configuration\\ of $\Delta(C_1,C_2,C_3)$} \\ \hline\hline
    $\sum \alpha_i\in\{2\pi,4\pi\}$ & $0$ & \makecell{$k=1$ if $\sum\alpha_i=2\pi$, \\
    $k=2$ if $\sum\alpha_i=4\pi$} & $C_1=C_2=C_3$ \\ \hline
    $\sum \alpha_i>4\pi$ & $4\pi-\sum\alpha_i$ & $k=2$ & \makecell{clockwise oriented, \\ interior angles $\pi-\alpha_i/2$}\\ \hline
    $\sum \alpha_i<2\pi$ & $2\pi-\sum\alpha_i$ & $k=1$ & \makecell{anti-clockwise oriented, \\interior angles $\alpha_i/2$}\\
    \hline
  \end{tabular}
  \bigskip
  \caption{Summary of the different configurations of fixed points in the case $n=3$.}\label{tab:configurations-cae-n=3}
\end{table}

So far, we discussed the properties of the elements of $\relcharvarthree$. Now, we address the question of existence and uniqueness of such elements. If $\alpha_1+\alpha_2+\alpha_3>4\pi$, then there exists a unique clockwise oriented triangle $\Delta_\alpha$ in $\Hyp$, up to orientation-preserving isometries, with interior angles $\pi-\alpha_i/2$. The composition of the reflections through the sides of $\Delta_\alpha$, as in the proof of Lemma ~\ref{lem:trichotomy-case-n=3}, defines an element of $\relcharvarthree$. This element is unique because $\Delta_\alpha$ is unique up to isometry. If $\alpha_1+\alpha_2+\alpha_3=4\pi$, then $\Delta_\alpha$ is degenerate to a point. The rotations of angle $\alpha_i$ around that point define an element of $\relcharvarthree$. This element is unique because $G$ acts transitively on the upper half-plane. The case $\alpha_1+\alpha_2+\alpha_3\leq 2\pi$ is similar. In conclusion, we obtain

\begin{lem}\label{lem:uniqueness-representation-case-n=3}
If $\alpha_1+\alpha_2+\alpha_3\in (0,2\pi]\cup [4\pi,6\pi)$, then $\relcharvarthree$ is a singleton and $\ChainTriang$ consists only of the isometry class of $\Delta_\alpha$. If $\alpha_1+\alpha_2+\alpha_3\in (2\pi,4\pi)$, then $\relcharvarthree$ and $\ChainTriang$ are empty.
\end{lem}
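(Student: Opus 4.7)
The plan is to split on the value of $\alpha_1+\alpha_2+\alpha_3$ according to the trichotomy established in Lemma \ref{lem:trichotomy-case-n=3}. Emptiness of $\relcharvarthree$ when $\alpha_1+\alpha_2+\alpha_3\in(2\pi,4\pi)$ is immediate from that lemma: any element of $\relcharvarthree$ must fall into one of the three listed configurations, and each of them forces $\sum_i\alpha_i$ to lie in $\{2\pi,4\pi\}$, or strictly above $4\pi$, or strictly below $2\pi$, so nothing is left for the open interval. Emptiness of $\ChainTriang$ then follows at once, since $\ChainTriang$ was defined as the image of the injection $\mathfrak P$.

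For existence and uniqueness in the range $\sum_i\alpha_i\in(4\pi,6\pi)$, the idea is to reverse the construction appearing in the proof of Lemma \ref{lem:trichotomy-case-n=3}. The three positive numbers $\pi-\alpha_i/2$ sum to $3\pi-\tfrac{1}{2}\sum_i\alpha_i<\pi$, which is the Gauss-Bonnet condition for the existence of a hyperbolic triangle with these interior angles; such a triangle is moreover unique up to the $G$-action. I would orient it clockwise to obtain $\Delta_\alpha$ and then set $\phi(c_i)$ to be the product of the reflections through the two sides of $\Delta_\alpha$ meeting at the vertex $C_i$, taken in the order dictated by the clockwise orientation. By the angle-defect computation already carried out in the proof of Lemma \ref{lem:trichotomy-case-n=3}, each $\phi(c_i)$ is then an element of $G$ fixing $C_i$ with angle of rotation exactly $\alpha_i$, and the six reflections telescope to give $\phi(c_1)\phi(c_2)\phi(c_3)=1$, so $\phi$ is a well-defined representation of $\pi_1(\Sigma_3)$. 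Its conjugacy class is the unique preimage of (the isometry class of) $\Delta_\alpha$ under $\mathfrak P$. The symmetric range $\sum_i\alpha_i\in(0,2\pi)$ is treated in exactly the same way, with $\Delta_\alpha$ now taken anti-clockwise and with interior angles $\alpha_i/2$.

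The endpoint cases $\sum_i\alpha_i\in\{2\pi,4\pi\}$ are degenerate: the triangle collapses to a point. I would pick any base point $z_0\in\Hyp$ and let $\phi(c_i)$ be the anti-clockwise rotation of angle $\alpha_i$ around $z_0$. These elements of $G$ commute pairwise and their product is rotation by $\sum_i\alpha_i\in 2\pi\Z$, hence trivial in $\psl$. Uniqueness of $[\phi]$ follows from the transitivity of $G$ on $\Hyp$, since any other representation with a single common fixed point is conjugate to this one. The only delicate point, and the sole place where a sign error could creep in, is to verify that in each non-degenerate case the reflection-product construction yields a rotation of angle exactly $\alpha_i$ and not $2\pi-\alpha_i$; this sign is pinned down by the orientation of $\Delta_\alpha$ already fixed in Lemma \ref{lem:trichotomy-case-n=3}, so no genuinely new computation should be required.
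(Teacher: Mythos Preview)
Your proposal is correct and follows essentially the same approach as the paper: both use Lemma~\ref{lem:trichotomy-case-n=3} to rule out the range $(2\pi,4\pi)$, construct the unique hyperbolic triangle $\Delta_\alpha$ with the prescribed interior angles for the non-degenerate cases, recover the representation via the reflection-product construction from the proof of Lemma~\ref{lem:trichotomy-case-n=3}, and invoke transitivity of $G$ on $\Hyp$ for the degenerate endpoints. Your write-up is slightly more explicit about the telescoping of reflections and the orientation/sign issue, but the argument is the same.
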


\subsection{The general case}\label{sec:the-general-case}

Let us first prove that the chain of triangles associated to a Deroin-Tholozan representation has the geometric properties stated in Lemma ~\ref{lem:interior-angles-chain-of-triangles}. The curves $b_1,\ldots,b_{n-3}$ illustrated in Figure ~\ref{fig:curves-b_i} define a pants decomposition of $\Sigma_n$ into $n-2$ pair of pants $P_0,\ldots,P_{n-3}$. The pair of pants $P_i$ has boundary curves $b_i^{-1}$, $c_{i+2}$ and $b_{i+1}$ (with the convention that $b_0=c_1^{-1}$ and $b_{n-2}=c_n$). Let $[\phi]\in\dtrelcharvar$. The conjugacy class $[\phi\restriction_{P_i}]$ of the restriction of $\phi$ to $P_i$ lies in the relative character variety $\Rep_{\varpi_i}(P_i,G)$ where $\varpi_i$ is the vector of angles $(2\pi-\beta_i([\phi]),\alpha_{i+2},\beta_{i+1}([\phi]))$. Indeed, the functions $\beta_i$, introduced in \eqref{eq:defn-beta_i}, measure the angle of rotation of the evaluation on the curve $b_i$. Deroin-Tholozan observed in 
\cite{DeTh19} that the relative Euler classes of all the $[\phi\restriction_{P_i}]$ are automatically maximal. The argument is simple. Since the volume of a representation is additive, it holds that $\vol([\phi])=\vol([\phi\restriction_{P_0}])+\ldots+\vol([\phi\restriction_{P_{n-3}}])$ or equivalently 
\begin{align}
2\pi (n-1)-\sum_{i=1}^n\alpha_i&=\sum_{i=0}^{n-3} \left(2\pi k([\phi\restriction_{P_i}])-(2\pi-\beta_i([\phi])+\alpha_{i+2}+\beta_{i+1}([\phi]))\right) \label{eq:sum-areas}\\
&=2\pi\sum_{i=0}^{n-3}k([\phi\restriction_{P_i}]) - 2\pi(n-3)- \sum_{i=1}^n\alpha_i. \nonumber
\end{align}
So, we conclude $k([\phi\restriction_{P_0}])+\ldots+k([\phi\restriction_{P_{n-3}}])=2(n-2)$. Table ~\ref{tab:configurations-cae-n=3} says that $k([\phi\restriction_{P_i}])\in\{1,2\}$ for every $i$. Therefore, it must hold $k([\phi\restriction_{P_i}])=2$ for every $i=0,\ldots,n-3$ and the relative Euler class of each $[\phi\restriction_{P_i}]$ is indeed maximal. 

We can apply the case distinction of Table ~\ref{tab:configurations-cae-n=3} to the triangles $\Delta_0,\ldots,\Delta_{n-3}$ built from $\mathfrak P([\phi])$. Let $\Delta_i$ be any of these triangles. For $k([\phi\restriction_{P_i}])=2$, we have $2\pi-\beta_i([\phi])+\alpha_{i+2}+\beta_{i+1}([\phi])\geq 4\pi$ or equivalently
\[
\alpha_{i+2}+\beta_{i+1}([\phi])-\beta_i([\phi])\geq 2\pi.
\]
If $\alpha_{i+2}+\beta_{i+1}([\phi])-\beta_i([\phi])> 2\pi$, then $\Delta_i$ is a non-degenerate, clockwise oriented, triangle with interior angles $\beta_i([\phi])/2$, $\pi-\alpha_{i+1}/2$ and $\pi-\beta_{i+1}([\phi])/2$, such as stated in Lemma ~\ref{lem:interior-angles-chain-of-triangles}. If $\alpha_{i+2}+\beta_{i+1}([\phi])-\beta_i([\phi])= 2\pi$, then $\Delta_i$ is degenerate to a point. In both cases,
\[
\vol([\phi\restriction_{P_i}])=-2[\Delta_i]=-(\alpha_{i+2}+\beta_{i+1}([\phi])-\beta_i([\phi])-2\pi).
\]
Observe that, thanks to the clockwise orientation of $\Delta_i$, its area is always nonnegative. Table ~\ref{tab:configurations-general-case} summarizes the above discussion.

\begin{table}
 \centering
  \begin{tabular}{ | c | c | c | }
    \hline
    angles & $\vol([\phi\restriction_{P_i}])$  &   \makecell{configuration of \\ $\Delta_i=\Delta(B_i,C_{i+2},B_{i+1})$} \\ \hline\hline
    $\alpha_{i+2}+\beta_{i+1}-\beta_i> 2\pi$ & $-(\alpha_{i+2}+\beta_{i+1}-\beta_i-2\pi)$ &  \makecell{\footnotesize{clockwise oriented,}\\ \footnotesize{interior angles $\beta_i/2$,}\\ \footnotesize{$\pi-\alpha_{i+2}/2$ and $\pi-\beta_{i+1}/2$}}\\ \hline
    $\alpha_{i+2}+\beta_{i+1}-\beta_i= 2\pi$ & $0$ & \makecell{\footnotesize{degenerate,}\\ $B_i=C_{i+2}=B_{i+1}$.} \\ \hline
  \end{tabular}
  \bigskip
  \caption{The two different natures of $[\phi\restriction_{P_i}]$.}\label{tab:configurations-general-case}
  \end{table}

It turns out that Lemma ~\ref{lem:interior-angles-chain-of-triangles} completely determines $\ChainTriang$ in the case the triangles are non-degenerate. This allows for a purely geometric description of the subset $\ChainTriang$ of the moduli space of point configurations in $\Hyp$. This is the purpose of Lemma ~\ref{lem:description-chain-triangles-alpha}. In the case none of the triangles are degenerate, there is a cleaner formulation of the sufficient conditions for a chain of triangles to lie in $\ChainTriang$. We state it as Corollary ~\ref{cor:description-chain-triangles-alpha-non-degenerate}.

\begin{lem}\label{lem:description-chain-triangles-alpha}
Let $(C_1,\ldots,C_n,B_1,\ldots,B_{n-3})$ be a configuration of points in the upper half-plane and let $(\Delta_0,\ldots,\Delta_{n-3})$ be the chain of triangles defined by $\Delta_i=\Delta(B_i,C_{i+2},B_{i+1})$, with the usual convention that $B_0=C_1$ and $B_{n-2}=C_{n}$. Further, for $i=0,\ldots,n-4$, let
\[
\beta_{i+1}\coloneqq\sum_{j=0}^i 2[\Delta_j]-\sum_{j=1}^{i+2}\alpha_j+2(i+2)\pi.
\]
The isometry class of $(C_1,\ldots,C_n,B_1,\ldots,B_{n-3})$ lies in $\ChainTriang$ if and only if the following conditions on $\Delta_0,\ldots,\Delta_{n-3}$ are fulfilled.
\begin{enumerate}
\item If $[\Delta_i]>0$, then $\Delta_i$ is clockwise oriented and has interior angle $\beta_i/2$ at $B_i$, $\pi-\alpha_{i+2}/2$ at $C_{i+2}$ and $\pi-\beta_{i+1}/2$ at $B_{i+1}$. Moreover, if $i=0$, then $\Delta_0$ has interior angle $\pi-\alpha_{1}/2$ at $C_{1}$ and if $i=n-3$, then $\Delta_{n-3}$ has interior angle $\pi-\alpha_{n}/2$ at $C_{n}$.
\item If $[\Delta_i]=0$, then $B_i=C_{i+2}=B_{i+1}$.
\end{enumerate}
\end{lem}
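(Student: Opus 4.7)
The plan is to prove the characterization by verifying both directions of the \emph{iff}. The forward direction is essentially a bookkeeping consequence of what has already been established earlier in the section; the sufficient direction requires producing a representation from the chain of triangles and checking it is a Deroin-Tholozan representation whose polygonal model is the given chain.

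For the necessary direction ($\Rightarrow$), I assume the isometry class is $\mathfrak P([\phi])$ for some $[\phi]\in\dtrelcharvar$. The geometric description of each non-degenerate $\Delta_i$ in condition (1) is exactly Lemma \ref{lem:interior-angles-chain-of-triangles}, while the degenerate case (2) is the second row of Table \ref{tab:configurations-general-case}. It remains to verify that the angle appearing in those conditions, namely $\beta_i([\phi])$, agrees with the closed-form expression defining $\beta_{i+1}$ in the lemma. I proceed by induction on $i$: the base case is the convention $\beta_0=2\pi-\alpha_1$, and the inductive step combines the identity $\vol([\phi\restriction_{P_i}])=-2[\Delta_i]$ (from Table \ref{tab:configurations-general-case}) with Theorem \ref{thm:volume}(4) applied to the pants $P_i$ (with $k([\phi\restriction_{P_i}])=2$, which was established in the argument surrounding equation \eqref{eq:sum-areas}). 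This gives the one-step recursion $\beta_{i+1}=\beta_i+2[\Delta_i]+2\pi-\alpha_{i+2}$, which telescopes to the claimed formula.

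For the sufficient direction ($\Leftarrow$), suppose a configuration $(C_1,\ldots,C_n,B_1,\ldots,B_{n-3})$ satisfies conditions (1) and (2). I define $\phi\colon\pi_1(\Sigma_n)\to G$ on generators by declaring $\phi(c_i)$ to be the rotation of angle $\alpha_i$ around $C_i$, and I must check (i) that the relation $\phi(c_1)\phi(c_2)\cdots\phi(c_n)=\Id$ holds so that $\phi$ is a well-defined homomorphism, and (ii) that $[\phi]\in\dtrelcharvar$. For (i), the key observation is that on each pants $P_i$ with boundary curves $b_i^{-1},c_{i+2},b_{i+1}$, the triangle $\Delta_i$ (non-degenerate or degenerate) determines via the case $n=3$ analysis (Lemma \ref{lem:trichotomy-case-n=3} together with Lemma \ref{lem:uniqueness-representation-case-n=3}) a unique triple of rotations of angles $2\pi-\beta_i,\alpha_{i+2},\beta_{i+1}$ around $B_i,C_{i+2},B_{i+1}$ whose product is the identity. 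Using the starting value $\phi(b_0)=\phi(c_1)^{-1}$ and the pants relations $\phi(b_{i+1})=\phi(c_{i+2})^{-1}\phi(b_i)$, a chained computation along $i=0,\ldots,n-3$ yields $\phi(c_n)=\phi(b_{n-2})=\phi(c_{n-1})^{-1}\cdots\phi(c_1)^{-1}$, which is the desired relation. For (ii), additivity of volume gives $\vol([\phi])=\sum_{i=0}^{n-3}\vol([\phi\restriction_{P_i}])=-2\sum_i[\Delta_i]$, and evaluating the closed-form formula for $\beta_i$ at $i=n-3$ together with the boundary convention $\beta_{n-2}=\alpha_n$ rearranges into $2\sum_i[\Delta_i]=\sum_i\alpha_i-2\pi(n-1)=\lambda$, so $\vol([\phi])=-\lambda$ and $[\phi]\in\dtrelcharvar$. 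Finally, $\mathfrak P([\phi])$ recovers the original configuration by construction.

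The main obstacle I anticipate is the gluing in the sufficient direction when some triangles are degenerate. In that case $B_i=C_{i+2}=B_{i+1}$ and the relevant rotations all commute around the common point, but consistency of the $\beta_i$ angles across adjacent pants still has to be tracked through the recursion; the formula in the lemma is precisely what ensures this, since it was designed to encode the cumulative area defect of the chain. A secondary subtlety is confirming that the values $\beta_i$ produced by the formula genuinely lie in $(0,2\pi)$ so that the rotations in question are elliptic with the prescribed angles of rotation, but this follows a posteriori from the constructed $\phi$ being Deroin-Tholozan together with total ellipticity (Proposition \ref{prop:totally-elliptic}).
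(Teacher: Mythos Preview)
Your proposal is correct and follows essentially the same route as the paper: the forward direction is read off from Table~\ref{tab:configurations-general-case} and Lemma~\ref{lem:interior-angles-chain-of-triangles}, and the backward direction defines $\phi(c_i)$ as the rotation of angle $\alpha_i$ about $C_i$, verifies the single relator by chaining the $n=3$ analysis across the pants, and then computes $\vol([\phi])=-\lambda$ via additivity and Lemma~\ref{lem:volume-case-n=3}.

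One point deserves correction. Your proposed resolution of the ``secondary subtlety'' that $\beta_i\in(0,2\pi)$ is circular: you cannot appeal to total ellipticity of $[\phi]$ before you have established that $\phi$ is a Deroin--Tholozan representation, and your verification of the relator on each pants (via Lemma~\ref{lem:uniqueness-representation-case-n=3}) already presupposes that the peripheral angles $2\pi-\beta_i,\alpha_{i+2},\beta_{i+1}$ lie in $(0,2\pi)$. The fix is immediate and non-circular: whenever $\Delta_i$ is non-degenerate, condition~(1) asserts that $\beta_i/2$ and $\pi-\beta_{i+1}/2$ are interior angles of a hyperbolic triangle, hence lie in $(0,\pi)$, which gives $\beta_i,\beta_{i+1}\in(0,2\pi)$ directly; in a degenerate pants all three rotations share a fixed point and the needed identity reduces to the recursion $\beta_{i+1}=\beta_i+2\pi-\alpha_{i+2}$, which holds by the closed formula with $[\Delta_i]=0$. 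The paper sidesteps the issue in the same way, arguing ``as in the proof of Lemma~\ref{lem:trichotomy-case-n=3}'' (i.e.\ via reflections) rather than invoking the statement of Lemma~\ref{lem:uniqueness-representation-case-n=3}.
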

\begin{proof}
The forward implication follows from the discussion that led to Table ~\ref{tab:configurations-general-case}. To prove the backward implication, start with a configuration of points $(C_1,\ldots,C_n,B_1,\ldots,B_{n-3})$ in the upper half-plane that satisfy the properties \textit{(1)} and \textit{(2)}. We construct a Deroin-Tholozan representation $[\phi]$ such that $\mathfrak P ([\phi])$ is the isometry class of $(C_1,\ldots,C_n,B_1,\ldots,B_{n-3})$. Define $\phi(c_i)$ to be the rotation of angle $\alpha_i$ with fixed point $C_i$. We first claim that $\phi$ is a representation $\pi_1(\Sigma_n)$ into $G$, i.e.\ $\phi(c_1)\cdots\phi(c_n)=1$. Indeed, arguing as in the proof of Lemma ~\ref{lem:trichotomy-case-n=3}, we observe that $\phi(c_2)^{-1}\phi(c_1)^{-1}$ is a rotation of angle
\[
2[\Delta_0]-\alpha_1-\alpha_2+4\pi
\]
around $B_1$. This angle is by definition equal to $\beta_1$. Similarly, $\phi(c_{n-2})^{-1}\cdots\phi(c_1)^{-1}$ is a rotation of angle 
\[
2[\Delta_{n-4}]-\alpha_{n-2}-(2\pi-\beta_{n-4})+4\pi
\]
around $B_{n-3}$. Again, observe that this angle is by definition equal to $\beta_{n-3}$. Moreover, the same argument shows that $\phi(c_{n-1})\phi(c_n)$ is also a rotation of angle $\beta_{n-3}$ around $B_{n-3}$. Hence $\phi(c_{n-2})^{-1}\cdots\phi(c_1)^{-1}=\phi(c_{n-1})\phi(c_n)$. This proves that $\phi$ is a representation of $\pi_1(\Sigma_n)$ into $G$. It is immediate from the definition of $\phi$ that $[\phi]\in \relcharvar$. We now prove that $\vol([\phi])=-\lambda$. In fact, using both the additivity of the volume and Lemma ~\ref{lem:volume-case-n=3}, we obtain
\[
\vol([\phi])=-2\sum_{i=0}^{n-3}[\Delta_i].
\]
We express $[\Delta_{n-3}]$ in therms of the interior angles of $\Delta_{n-3}$:
\begin{align*}
-2[\Delta_{n-3}]&=-2\pi+(2\pi-\alpha_n)+(2\pi-\alpha_{n-1})+\beta_{n-3}\\
&=2\pi-\alpha_n-\alpha_{n-1}+\beta_{n-3}.
\end{align*}
By definition of $\beta_{n-3}$ it holds
\[
-2\sum_{i=0}^{n-4}[\Delta_i]=-\beta_{n-3}-\sum_{i=1}^{n-2}\alpha_i+2(n-2)\pi.
\]
We conclude that $\vol([\phi])=2\pi(n-1)-\sum_{i=1}^n\alpha_i=-\lambda$ and thus $[\phi]\in\dtrelcharvar$. By construction, the chain of triangles built from $\mathfrak P([\phi])$ is $(\Delta_0,\ldots,\Delta_{n-3})$. We conclude that the isometry class of $(C_1,\ldots,C_n,B_1,\ldots,B_{n-3})$ lies in $\ChainTriang$ as desired.
\end{proof}

If all the triangles are non-degenerate, then Lemma ~\ref{lem:description-chain-triangles-alpha} admits a cleaner formulation which we state as a corollary.

\begin{cor}\label{cor:description-chain-triangles-alpha-non-degenerate}
Let $(C_1,\ldots,C_n,B_1,\ldots,B_{n-3})$ be a configuration of points in the upper half-plane and let $(\Delta_0,\ldots,\Delta_{n-3})$ be the chain of triangles it defines. Assume that none of the triangles $\Delta_i$ are degenerate. The isometry class of $(C_1,\ldots,C_n,B_1,\ldots,B_{n-3})$ lies in $\ChainTriang$ if and only if the following conditions on $\Delta_0,\ldots,\Delta_{n-3}$ are fulfilled.
\begin{enumerate}
\item The triangle $\Delta_i$ is clockwise oriented and has interior angle $\pi-\alpha_{i+2}/2$ at $C_{i+2}$. Moreover, if $i=0$, then $\Delta_0$ has interior angle $\pi-\alpha_{1}/2$ at $C_{1}$ and if $i=n-3$, then $\Delta_{n-3}$ has interior angle $\pi-\alpha_{n}/2$ at $C_{n}$.
\item The interior angles of $\Delta_i$ and $\Delta_{i+1}$ at their common vertex $B_{i+1}$ are supplementary.
\end{enumerate}
\end{cor}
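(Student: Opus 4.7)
The plan is to derive Corollary \ref{cor:description-chain-triangles-alpha-non-degenerate} directly from Lemma \ref{lem:description-chain-triangles-alpha} by reinterpreting the recursive area-based definition of $\beta_i$ in terms of the simpler ``supplementary angles at $B_{i+1}$'' condition. Since every triangle is assumed non-degenerate, case (2) of Lemma \ref{lem:description-chain-triangles-alpha} never occurs, so only case (1) is at play.

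For the forward direction, I would assume that the isometry class of $(C_1,\ldots,C_n,B_1,\ldots,B_{n-3})$ lies in $\ChainTriang$. Lemma \ref{lem:description-chain-triangles-alpha} then tells us that each $\Delta_i$ is clockwise oriented with interior angle $\pi-\alpha_{i+2}/2$ at $C_{i+2}$, plus the special angles at $C_1$ in $\Delta_0$ and $C_n$ in $\Delta_{n-3}$, which is exactly condition (1) of the corollary. For condition (2), Lemma \ref{lem:description-chain-triangles-alpha} asserts that the interior angle of $\Delta_i$ at $B_{i+1}$ is $\pi-\beta_{i+1}/2$ and the interior angle of $\Delta_{i+1}$ at $B_{i+1}$ is $\beta_{i+1}/2$; these two angles sum to $\pi$, so they are supplementary.

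For the backward direction, I start with a configuration satisfying (1) and (2) of the corollary and recursively \emph{define} real numbers $\beta_1,\ldots,\beta_{n-3}$ by declaring $\beta_{i+1}/2$ to be the interior angle of $\Delta_{i+1}$ at $B_{i+1}$. Condition (2) then translates into the statement that $\Delta_i$ has interior angle $\pi-\beta_{i+1}/2$ at $B_{i+1}$. Applying the hyperbolic Gauss--Bonnet formula $[\Delta_i]=\pi-(\text{sum of interior angles})$ to $\Delta_0$, using the prescribed angles $\pi-\alpha_1/2$ at $C_1$ and $\pi-\alpha_2/2$ at $C_2$, yields $\beta_1=2[\Delta_0]+4\pi-\alpha_1-\alpha_2$. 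Applying the same formula to each subsequent $\Delta_i$ gives the one-step recursion $\beta_{i+1}=\beta_i+2[\Delta_i]+2\pi-\alpha_{i+2}$, which telescopes to exactly the closed-form expression for $\beta_{i+1}$ stated in Lemma \ref{lem:description-chain-triangles-alpha}. All hypotheses of case (1) of that lemma are now verified, so the isometry class of $(C_1,\ldots,C_n,B_1,\ldots,B_{n-3})$ lies in $\ChainTriang$.

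The main obstacle is pure bookkeeping at the two ends of the chain: one must check that the conventions $\beta_0=2\pi-\alpha_1$ and $\beta_{n-2}=\alpha_n$, combined with the prescribed interior angles $\pi-\alpha_1/2$ at $C_1$ in $\Delta_0$ and $\pi-\alpha_n/2$ at $C_n$ in $\Delta_{n-3}$, are consistent with the recursion so that the argument closes up at $\Delta_{n-3}$. Once the two boundary triangles are handled, no further geometric input is required beyond the telescoping computation.
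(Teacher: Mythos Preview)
Your proposal is correct and follows exactly the approach implicit in the paper, which states the corollary without proof as an immediate reformulation of Lemma~\ref{lem:description-chain-triangles-alpha} in the non-degenerate case. Your verification via Gauss--Bonnet that the geometrically defined $\beta_{i+1}$ agrees with the recursive formula of Lemma~\ref{lem:description-chain-triangles-alpha}, together with the boundary checks at $B_0=C_1$ and $B_{n-2}=C_n$, is precisely the bookkeeping the paper leaves to the reader.
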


The conditions of Corollary ~\ref{cor:description-chain-triangles-alpha-non-degenerate} are illustrated on Figure ~\ref{fig:admissible_chain}.

\begin{figure}[h]
\centering
\resizebox{\textwidth}{!}{%
\begin{tikzpicture}[framed,font=\sffamily,decoration={
    markings,
    mark=at position 1 with {\arrow{>}}}]]
    
\node[anchor=south west,inner sep=0] at (0,0) {\includegraphics[width=\textwidth]{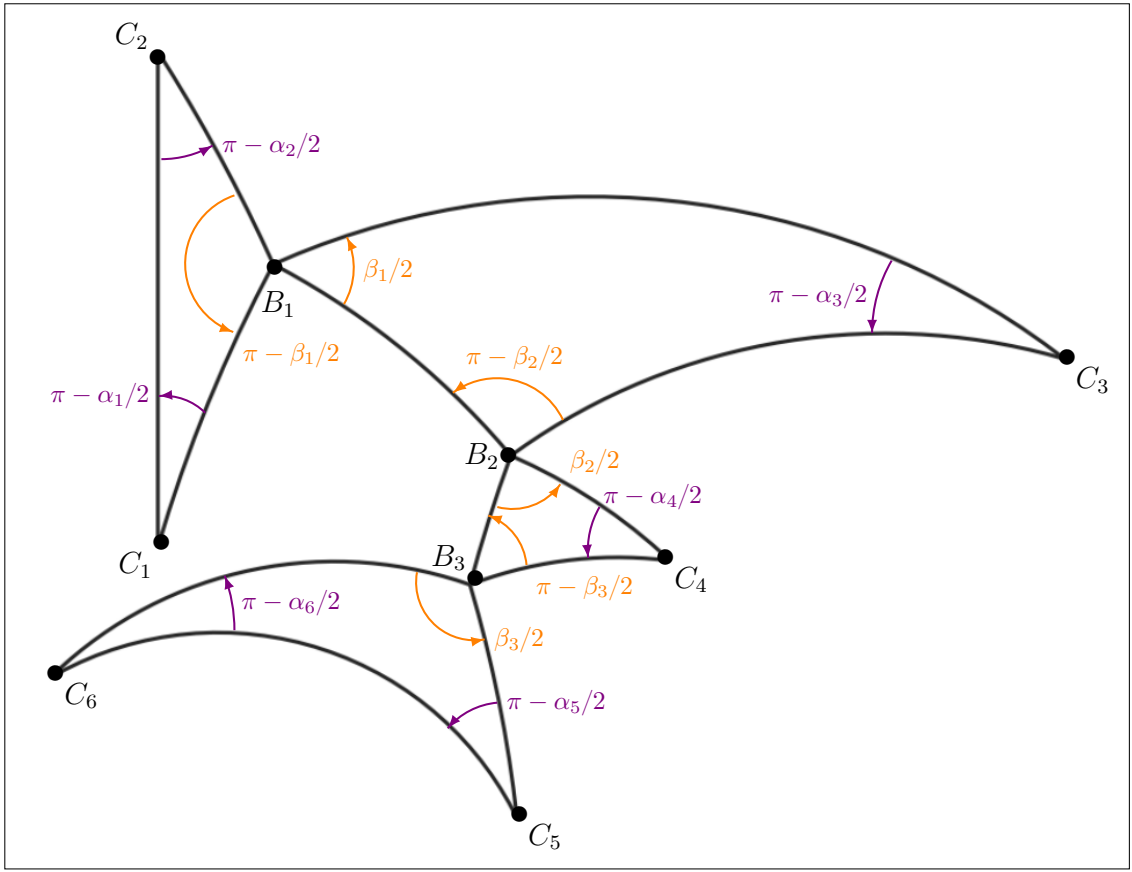}};

\draw (3.55,8) node{\huge $\bullet$};
\draw (3.6,7.5) node{\Large $B_1$};
\draw (6.75,5.4) node{\huge $\bullet$} node[left]{\Large $B_2$};
\draw (6.3,3.7) node{\huge $\bullet$} node[above left]{\Large $B_3$};

\draw (2,4.2) node{\huge $\bullet$} node[below left]{\Large $C_1$};
\draw (1.95,10.9) node{\huge $\bullet$} node[above left]{\Large $C_2$};
\draw (14.4,6.75) node{\huge $\bullet$} node[below right]{\Large $C_3$};
\draw (8.9,4) node{\huge $\bullet$} node[below right]{\Large $C_4$};
\draw (6.9,.45) node{\huge $\bullet$} node[below right]{\Large $C_5$};
\draw (.55,2.4) node{\huge $\bullet$} node[below right]{\Large $C_6$};

\draw[thick, violet, postaction={decorate}] (2.6,6) arc (45:95:.8) node[at end, left]{$\pi-\alpha_1/2$};
\draw[thick, violet, postaction={decorate}] (2,9.5) arc (-90:-60:1.4) node[at end, right]{$\pi-\alpha_2/2$};
\draw[thick, violet, postaction={decorate}] (12,8.1) arc (150:180:2) node[midway, left]{$\pi-\alpha_3/2$};
\draw[thick, violet, postaction={decorate}] (8,4.7) arc (150:185:1.2) node[near start, above right]{$\pi-\alpha_4/2$};
\draw[thick, violet, postaction={decorate}] (6.6,2) arc (95:140:1) node[at start, right]{$\pi-\alpha_5/2$};
\draw[thick, violet, postaction={decorate}] (3,3) arc (0:20:2.2) node[midway, right]{$\pi-\alpha_6/2$};

\draw[thick, red!50!yellow, postaction={decorate}] (3,9) arc (109:250:1) node[at end, below right]{$\pi-\beta_1/2$};
\draw[thick, red!50!yellow, postaction={decorate}] (4.5,7.5) arc (-30:25:1) node[midway, right]{$\beta_1/2$};
\draw[thick, red!50!yellow, postaction={decorate}] (7.5,5.9) arc (25:127:1) node[midway, above]{$\pi-\beta_2/2$};
\draw[thick, red!50!yellow, postaction={decorate}] (6.6,4.7) arc (255:325:.8) node[at end, above right]{$\beta_2/2$};
\draw[thick, red!50!yellow, postaction={decorate}] (7,3.9) arc (5:70:.8) node[at start, below right]{$\pi-\beta_3/2$};
\draw[thick, red!50!yellow, postaction={decorate}] (5.5,3.8) arc (170:280:.8) node[at end, right]{$\beta_3/2$};
\end{tikzpicture}
}
\caption{Example of a configuration of points whose isometry class lies in $\ChainTriang$ in the case $n=6$.}\label{fig:admissible_chain}
\end{figure}

\begin{rem}
We point out that $\mathfrak P(\phi)$ makes sense for any totally elliptic representation $\phi\colon \pi_1(\Sigma_n)\to G$ and not only for Deroin-Tholozan representations. The triangles in the induced chain, however, do not need to be clockwise oriented (whereas it is the case for Deroin-Tholozan representations by Lemma ~\ref{lem:description-chain-triangles-alpha}). This makes it harder to define similar coordinates for such totally elliptic representations. Nevertheless, the question whether totally elliptic representations $[\phi]\in \relcharvar$ are necessarily of type Deroin-Tholozan (i.e.\ satisfy $\vol([\phi])=-\lambda$) remains, to the author's knowledge, open for $n\geq 5$. The claim is true for $n\in\{3,4\}$ because of volume considerations, as explained in~\cite[Remark~2.8]{Ma20}.
\end{rem}

\subsection{The torus action revisited}\label{sec:torus-action-revisited}

We explained how to use Proposition ~\ref{prop:totally-elliptic} to associate to a maximal collection of simple closed curves on $\Sigma_n$ a maximal torus action on the Deroin-Tholozan relative character variety. In this section we first fix a parametrization of the maximal torus action associated to the curves $b_1,\ldots,b_{n-3}$ we intend to work with. We should emphasize that our choice of parametrization is different from that of Deroin-Tholozan in 
\cite{DeTh19}. Deroin-Tholozan work with the torus action given by the Hamiltonian flows of the functions $\beta_1,\ldots,\beta_{n-3}\colon \dtrelcharvar\to (0,2\pi)$ defined in \eqref{eq:defn-beta_i}. We choose to consider the Hamiltonian flows of the functions $1/2(\beta_{i+1}-\beta_{i})$ instead. They define an effective action 
\begin{equation}\label{eq:torus-action}
\T^{n-3}\coloneqq(\R/2\pi\Z)^{n-3}\acts\dtrelcharvar.
\end{equation}
The reason for considering $1/2(\beta_{i+1}-\beta_{i})$ instead of $\beta_i$ is that the expression $1/2(\beta_{i+1}-\beta_{i})$ is up to constant equal to the area of the triangle $\Delta_i$, see Table ~\ref{tab:configurations-general-case}.

To see that the Hamiltonian flows of the functions $1/2(\beta_{i+1}-\beta_{i})$ indeed give an effective torus action, we follow \cite[§4]{Gol86} and write down explicitly the action on representations. For $\theta=(\theta_1,\ldots,\theta_{n-3})\in\R^{n-3}$ we introduce the notation
\[
\overline\theta_i\coloneqq\theta_i-\theta_{i-1}, \quad i=1,\ldots,n-3,
\]
where it is understood that $\theta_0=0$. The unique elliptic element of $G$ that fixes $z\in \Hyp$ with angle of rotation $\vartheta\in (0,2\pi)$ is denoted by $$\rot_\vartheta(z),$$ see also \eqref{eq:elliptic-matrix-given-angle-and-fixed-point}. Let $[\phi]\in \dtrelcharvar$ and let $B_i\in \Hyp$ be the fixed point of $\phi(b_i)$, with the convention that $B_{n-2}=C_n$ is the fixed point of $\phi(c_n)$. Under the action \eqref{eq:torus-action} the image of $\theta\in \R^{n-3}$ acting on $[\phi]\in \dtrelcharvar$ is the conjugacy class of the representation $\theta\cdot\phi$ given by
\[
(\theta\cdot\phi)(c_i)=\left(\prod_{j=1}^{i-2}\rot_{\overline\theta_j}(B_j)\right)\cdot\phi(c_i)\cdot\left(\prod_{j=1}^{i-2}\rot_{\overline\theta_j}(B_j)\right)^{-1}.
\]
Or more explicitly

\begin{equation}\label{eq:definiton-torus-action-on-representations}\arraycolsep=1.4pt\def\arraystretch{1.5}
\left\{\begin{array}{ll}
(\theta\cdot\phi)(c_1)=\phi(c_1),\\
(\theta\cdot\phi)(c_2)=\phi(c_2),\\
(\theta\cdot\phi)(c_3)=\rot_{\overline\theta_1}(B_1)\cdot\phi(c_3)\cdot\rot_{\overline\theta_1}(B_1)^{-1},\\
(\theta\cdot\phi)(c_4)=\rot_{\overline\theta_1}(B_1)\rot_{\overline\theta_2}(B_2)\cdot\phi(c_4)\cdot\rot_{\overline\theta_2}(B_2)^{-1}\rot_{\overline\theta_1}(B_1)^{-1},\\
\vdots\\
(\theta\cdot\phi)(c_{n-1})=\left(\prod_{i=1}^{n-3}\rot_{\overline\theta_i}(B_i)\right)\cdot\phi(c_{n-1})\cdot\left(\prod_{i=1}^{n-3}\rot_{\overline\theta_i}(B_i)\right)^{-1},\\
(\theta\cdot\phi)(c_n)=\left(\prod_{i=1}^{n-3}\rot_{\overline\theta_i}(B_i)\right)\cdot\phi(c_n)\cdot\left(\prod_{i=1}^{n-3}\rot_{\overline\theta_i}(B_i)\right)^{-1}.
\end{array}\right.
\end{equation}
Observe that both $\phi(c_{n-1})$ and $\phi(c_n)$ are conjugated by the same element because they correspond to the same triangle in the chain built from $\mathfrak P([\phi])$. We leave it to the reader to check that the $\R^{n-3}$-action \eqref{eq:definiton-torus-action-on-representations} is a well-defined group action that is $2\pi$-periodic in each factor. The reader is referred to 
\cite{Gol86} for explanations on how the explicit action \eqref{eq:definiton-torus-action-on-representations} corresponds to the torus action \eqref{eq:torus-action} given by the Hamiltonian flows of the functions $1/2(\beta_{i+1}-\beta_{i})$. 

The action \eqref{eq:torus-action} is a Hamiltonian torus action on $\dtrelcharvar$ equipped with the symplectic form $1/\lambda\cdot\omega_\G$ with moment map $\mu\colon \dtrelcharvar\to \R^{n-3}$ defined by
\begin{equation}\label{eq:definiton-moment-map}
\mu_i([\phi])\coloneqq \frac{1}{2\lambda}(\alpha_{i+2}+\beta_{i+1}([\phi])-\beta_i([\phi])-2\pi).
\end{equation}
Recall that $\lambda$ is the scaling factor introduced in Definition ~\ref{def:scaling-factor}. Comparing Table ~\ref{tab:configurations-general-case} one observes that 
\[
\mu_i([\phi])=\frac{1}{\lambda}[\Delta_i].
\]
The image of $\mu$ inside $\R^{n-3}$ is the moment polytope for the action of $\T^{n-3}$ on $\dtrelcharvar$. The area of the triangles in a chain corresponding to an element of $\dtrelcharvar$ are nonnegative numbers that sum up to $\lambda/2$:
\[
[\Delta_i]\in [0,\lambda/2]\subset [0,\pi),\quad [\Delta_0]+\ldots+[\Delta_{n-3}]=\lambda/2.
\]
This is a consequence of the additivity of the volume and Lemma ~\ref{lem:volume-case-n=3}; the computation is similar to \eqref{eq:sum-areas}. Hence
\begin{equation}\label{eq:moment-polytope-equations-mu}
\mu_i\in [0,1/2],\quad \mu_1+\ldots+\mu_{n-3}\leq 1/2.
\end{equation}
This shows that the moment polytope is the $(n-3)$-simplex in $\R^{n-3}$ with side length $1/2$. If we compare Lemma ~\ref{lem:description-chain-triangles-alpha} and the range of $[\Delta_i]$ we deduce
\begin{equation}\label{eq:range-beta_i}
\beta_i\in \left[ 2(i+1)\pi-\sum_{j=1}^{i+1}\alpha_j,\sum_{j=i+2}^n\alpha_j-2\pi (n-i-2)\right]\subset (0,2\pi).
\end{equation}
Observe that the length of the range of the function $\beta_i$ is equal to $\lambda$ and that the range of the function $\beta_{i+1}$ is obtained from that of $\beta_i$ by a translation of $2\pi-\alpha_{i+2}$. The moment polytope equations \eqref{eq:moment-polytope-equations-mu} translated in terms of $\beta_i$ read
\[
\left\{\begin{array}{l}
\beta_1\geq 4\pi-\alpha_1-\alpha_2,\\
\beta_i-\beta_{i+1}\leq \alpha_{i+2}-2\pi,\quad i=1,\ldots, n-4,\\
\beta_{n-3}\leq \alpha_{n-1}+\alpha_n-2\pi.\end{array}
\right.
\]

\begin{lem}\label{lem:fibres-moment-map-degenerated-triangles}
The fibre of the moment map $\mu$ over a point of the moment polytope is an embedded torus of dimension $k\in\{0,\ldots,n-3\}$ in $\dtrelcharvar$, where $(n-3)-k$ is the number of degenerate triangles in the chain associated to any element of the fibre.
\end{lem}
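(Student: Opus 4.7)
The plan is to reduce the lemma to the standard correspondence between faces of the moment polytope and orbit types for a symplectic toric manifold. By Theorem \ref{thm:deroin-tholozan-symplecto} together with the definition \eqref{eq:definiton-moment-map} of the moment map, the Deroin-Tholozan relative character variety equipped with the scaled Goldman form $\tfrac{1}{\lambda}\omega_\G$ and the torus action \eqref{eq:torus-action} is a symplectic toric manifold. A standard consequence of Atiyah-Guillemin-Sternberg convexity together with Delzant's local normal form (see e.g.\ \cite{Can01}) then asserts that the preimage under the moment map of a point $p$ of the moment polytope is a single torus orbit, and that this orbit is an embedded torus whose dimension equals that of the unique relatively open face of the polytope containing $p$.

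It remains to match the face structure of the moment polytope with the combinatorics of degenerate triangles. From $\mu_i=[\Delta_i]/\lambda$ and the relation $\sum_{i=0}^{n-3}[\Delta_i]=\lambda/2$, the image $\mu(\dtrelcharvar)$ lies inside the $(n-3)$-simplex
\[
P=\bigl\{x\in\R^{n-3}_{\geq 0}\ :\ x_1+\ldots+x_{n-3}\leq 1/2\bigr\}.
\]
I would then use the realization part of Lemma \ref{lem:description-chain-triangles-alpha} to produce, for any nonnegative tuple of areas summing to $\lambda/2$, an explicit chain in $\ChainTriang$, which forces $\mu(\dtrelcharvar)=P$. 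The $n-2$ facets of $P$ are then in one-to-one correspondence with the triangles $\Delta_0,\ldots,\Delta_{n-3}$: the facet $\{x_i=0\}$ with $\Delta_i$ degenerate for $i\in\{1,\ldots,n-3\}$, and the facet $\{x_1+\ldots+x_{n-3}=1/2\}$ with $\Delta_0$ degenerate.

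With this correspondence, the lemma follows immediately. If $p\in P$ corresponds to a chain with exactly $j=(n-3)-k$ degenerate triangles, then $p$ lies in the relative interior of the intersection of the $j$ facets associated with these triangles, which is a face of $P$ of dimension $k$. The toric fact recalled above identifies $\mu^{-1}(p)$ with an embedded torus of dimension $k$. The counting is automatically consistent: since $[\Delta_0]+\ldots+[\Delta_{n-3}]=\lambda/2>0$, at most $n-3$ triangles can degenerate simultaneously, so $k\geq 0$.

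I do not anticipate a serious technical obstacle: the non-trivial content is entirely packaged by the toric machinery of \cite{Can01} and by Lemma \ref{lem:description-chain-triangles-alpha}. The only point deserving care is the correct identification of the facet $\{x_1+\ldots+x_{n-3}=1/2\}$ with the degeneracy of $\Delta_0$, which involves the auxiliary area $[\Delta_0]$ that is not itself a coordinate of $\mu$ but is nevertheless pinned down by the total area constraint.
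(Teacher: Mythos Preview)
Your proposal is correct and follows exactly the approach the paper intends. In fact the paper does not prove the lemma at all: immediately after stating it, the authors simply remark that it ``is a standard fact about symplectic toric manifolds'' and move on. Your argument is a faithful unpacking of that standard fact---the Atiyah--Guillemin--Sternberg and Delzant theory identifying moment-map fibres with torus orbits whose dimension equals that of the containing face---together with the bookkeeping matching facets of the simplex to degenerate triangles via $\mu_i=[\Delta_i]/\lambda$ and the constraint $\sum_i[\Delta_i]=\lambda/2$. Note also that the paper has already computed the moment polytope to be this simplex in the paragraph just before the lemma, so your appeal to Lemma~\ref{lem:description-chain-triangles-alpha} for surjectivity of $\mu$ onto $P$ is consistent with, though slightly more explicit than, what the paper records.
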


Lemma ~\ref{lem:fibres-moment-map-degenerated-triangles} is a standard fact about symplectic toric manifolds. The toric fibres of maximal dimension $n-3$ form an open dense subset of $\dtrelcharvar$. They are called \emph{regular fibres} of the moment map. Their union is the preimage under $\mu$ of the interior of the moment polytope. We denote this subspace by
$$\regdtrelcharvar \subset \dtrelcharvar.$$
It is a full measure subset that consists exactly of the points where $\T^{n-3}$ acts freely.

The torus action \eqref{eq:torus-action} explicitly described by \eqref{eq:definiton-torus-action-on-representations} may look, in the words of a retired analyst, baroque. It can be easily visualized if we translate it to our polygon model. This is yet another pleasant feature of the polygonal model for the Deroin-Tholozan relative character variety. For this purpose, we declare the bijection $\mathfrak P\colon \dtrelcharvar\to \ChainTriang$ to be equivariant and define therewith an action of $\T^{n-3}$ on $\ChainTriang$. Let $\theta\in\T^{n-3}$. We denote the fixed points of $(\theta\cdot\phi)(c_i)$ and $(\theta\cdot\phi)(b_i)$ by $C_i^\theta$ and $B_i^\theta$, respectively. From \eqref{eq:definiton-torus-action-on-representations}, we obtain that
\begin{equation}\label{eq:image-of-C_i-torus-action}
C_1^\theta=C_1,\quad C_2^\theta=C_2,\quad C_3^\theta=\rot_{\overline\theta_1}(B_1)\cdot C_3,\quad \ldots,\quad C_n^\theta=\prod_{i=1}^{n-3}\rot_{\overline\theta_i}(B_i) \cdot C_n,
\end{equation}
and
\begin{equation}\label{eq:image-of-B_i-torus-action}
B_1^\theta=B_1,\quad B_2^\theta=\rot_{\overline\theta_1}(B_1)\cdot B_2,\quad \ldots,\quad B_{n-3}^\theta=\prod_{i=1}^{n-4}\rot_{\overline\theta_i}(B_i) \cdot B_{n-3}.
\end{equation}
This means that $\theta\in\T^{n-3}$ acts on a chain of triangles in $\ChainTriang$ by successive anti-clockwise rotations of the sub-chain of triangles $\Delta_{i},\ldots,\Delta_{n-3}$ by an angle $\overline\theta_i$ around $B_i$, see Figure ~\ref{fig:torus-action-chain-of-triangles}.

\begin{figure}
\begin{center}
\resizebox{8.5cm}{!}{%
\begin{tikzpicture}[framed,font=\sffamily,decoration={
    markings,
    mark=at position 1 with {\arrow{>}}}]]
    
\node[anchor=south west,inner sep=0] at (0,0) {\includegraphics[trim=0cm 7cm 1cm 7cm]{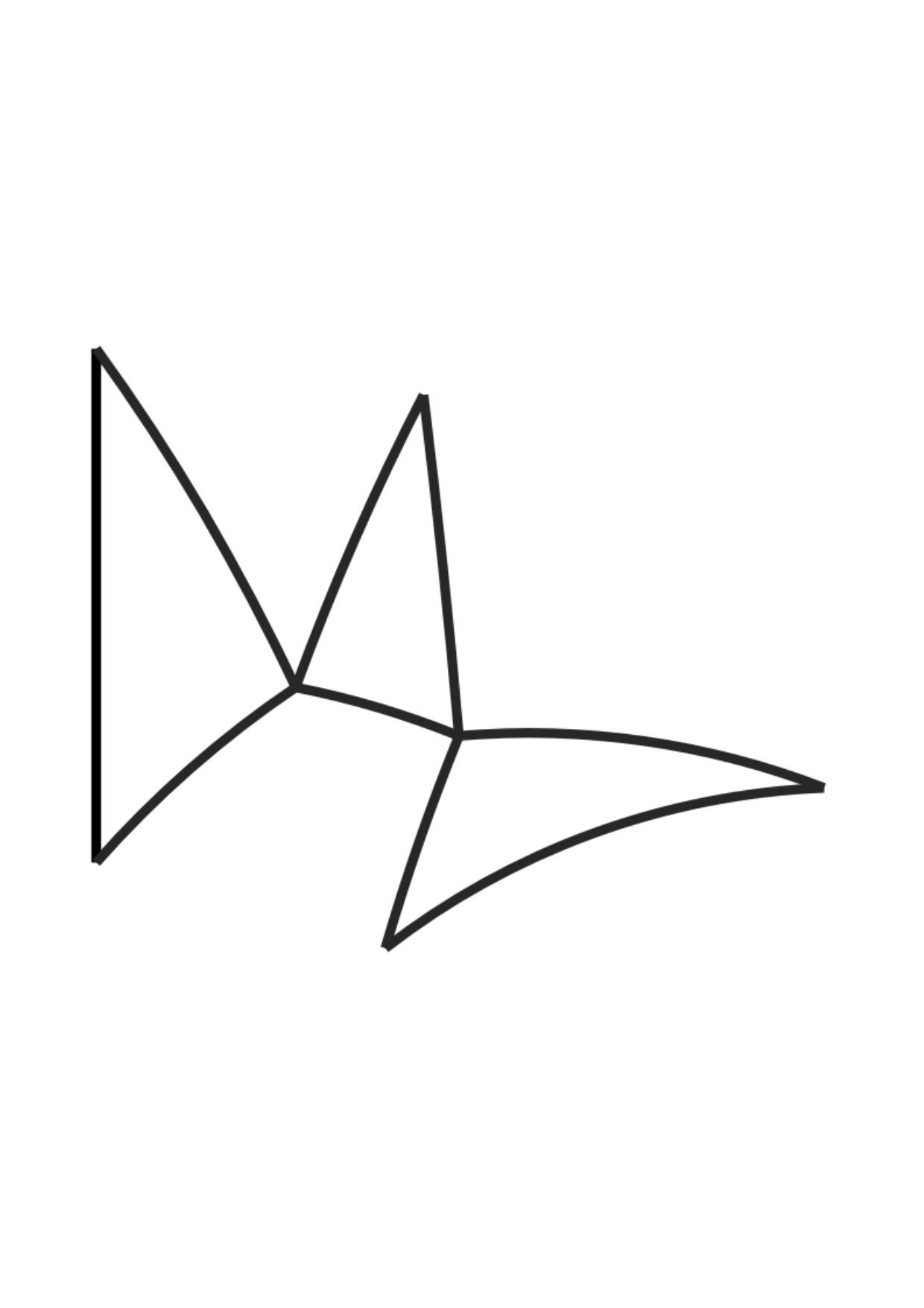}};

\draw (4,8)  node{\HUGE $\Delta_0$};
\draw (9,8.5)  node{\HUGE $\Delta_1$};
\draw (12,4.5)  node{\HUGE $\Delta_2$};
\end{tikzpicture}
}
\resizebox{8.5cm}{!}{%
\begin{tikzpicture}[framed,font=\sffamily,decoration={
    markings,
    mark=at position 1 with {\arrow{>}}}]]
    
\node[anchor=south west,inner sep=0] at (0,0) {\includegraphics[trim=0cm 6cm 1cm 6cm]{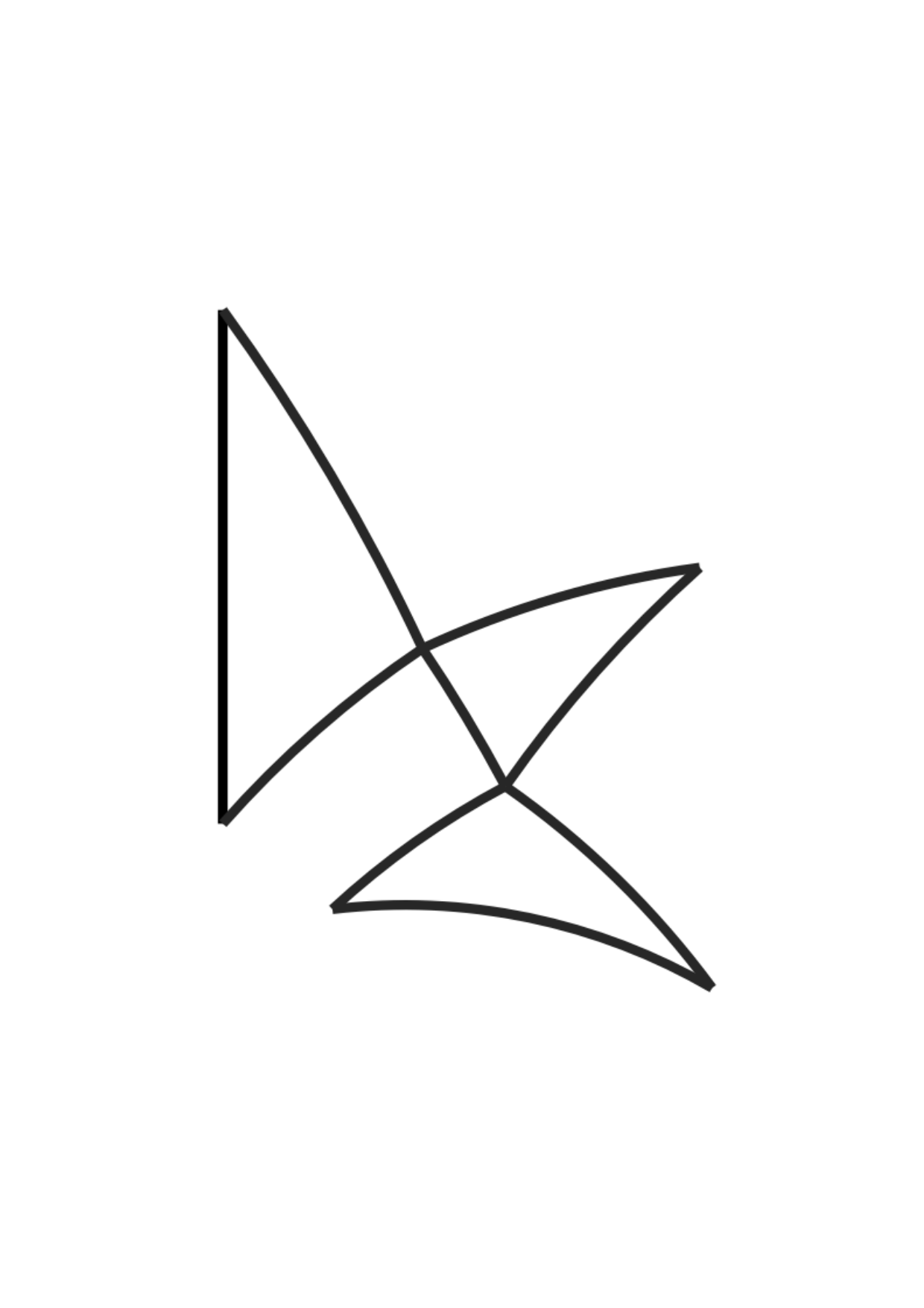}};

\draw[line width=1mm, violet, postaction={decorate}] (8,12) arc (120:15:3) node[midway, above right]{\HUGE $+ \frac{\pi}{4}$};

\draw[violet] (9.65,9.05) node{\HUGE $\bullet$};
\draw[violet] (9.55,8)  node{\HUGE $B_1$};

\end{tikzpicture}
}
\resizebox{8.5cm}{!}{%
\begin{tikzpicture}[framed,font=\sffamily,decoration={
    markings,
    mark=at position 1 with {\arrow{>}}}]]
    
\node[anchor=south west,inner sep=0] at (0,0) {\includegraphics[trim=0cm 6cm 1cm 6cm]{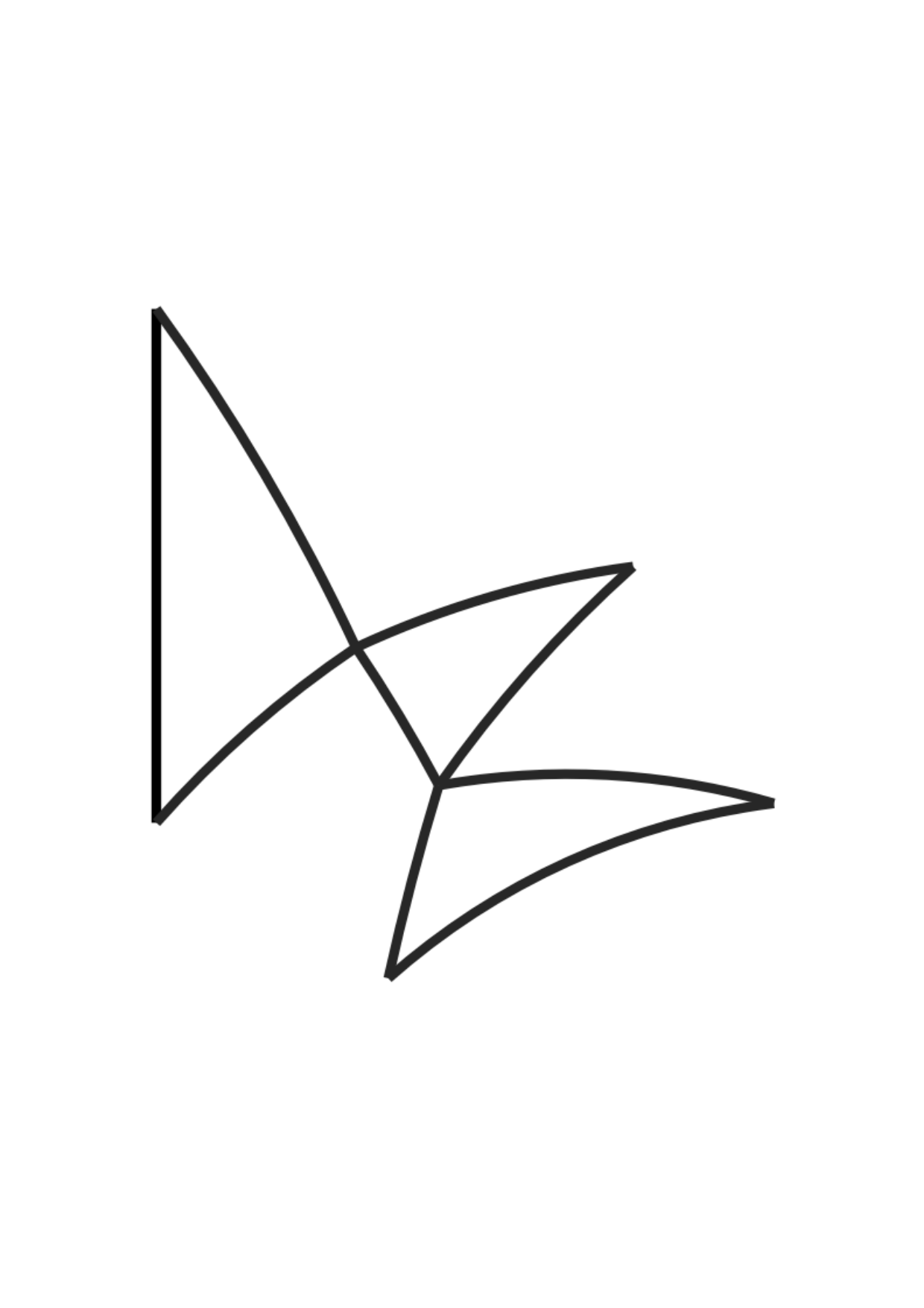}};

\draw[line width=1mm, red!50!yellow, postaction={decorate}] (14,6) arc (5:53:3.8) node[midway, above right]{\HUGE $- \frac{\pi}{4}$};

\draw[red!50!yellow] (10.05,5.95) node{\HUGE $\bullet$};
\draw[red!50!yellow] (9.05,5.95)  node{\HUGE $B_2$};

\end{tikzpicture}
}
\end{center}
\caption{The action of $(\theta_1,\theta_2)=(-\pi/4,0)\in \T^2$ in the case $n=5$. The initial configuration is drawn on top. The intermediate configuration is obtained after rotating the triangles $\Delta_1$ and $\Delta_2$ together anti-clockwise by an angle $\overline\theta_1=-\pi/4$ around $B_1$. The triangle $\Delta_0$ is not moved during this step. The final configuration is obtained from the intermediate configuration after rotating the triangle $\Delta_2$ anti-clockwise by an angle $\overline\theta_2=\pi/4$ around $B_2$. The triangles $\Delta_0$ and $\Delta_1$ are not moved during this step.}\label{fig:torus-action-chain-of-triangles}
\end{figure}

\section{Complex projective coordinates}\label{sec:complex-proj-coordinates}

In this section, we construct an explicit equivariant symplectomorphism from $\dtrelcharvar$ to $\CP^{n-3}$. It is based on the polygonal model developed in the previous section.

\subsection{Definition of the map}
Let $[\phi]\in \dtrelcharvar$. We associate to $[\phi]$ a collection of parameters defined using the chain of triangles $\Delta_0,\ldots,\Delta_{n-3}$ built from  $\mathfrak P ([\phi])\in\ChainTriang$. The first collection of parameters $a_0,\ldots,a_{n-3}\colon\dtrelcharvar\to[0,\infty)$ are called \emph{area parameters} and are defined to be twice the area of the triangle $\Delta_i$:
\[
a_i([\phi])\coloneqq 2[\Delta_i], \quad i=0,\ldots,n-3.
\]
Lemma ~\ref{lem:interior-angles-chain-of-triangles} implies that 
\begin{equation}\label{eq:area-parameters-in-terms-of-angles}
a_i([\phi])=\alpha_{i+2}+\beta_{i+1}([\phi])-\beta_i([\phi])-2\pi\geq 0.
\end{equation}
Each area parameter takes value in $[0,\lambda]$ and their sum is constant and equal to the scaling factor $\lambda>0$. This was already observed earlier when we computed the moment polytope for the moment map \eqref{eq:definiton-moment-map}. In particular, at least one area parameter is nonzero. Since the functions $\beta_i$ are analytic functions of $\dtrelcharvar$ (see Lemma ~\ref{lem:angle-of-rotation-elliptic}), the area parameters are analytic functions as well. Observe that, because of \eqref{eq:definiton-moment-map}, it holds that
\begin{equation}\label{eq:relation-area-parameters-moment-map}
a_i([\phi])=2\lambda\cdot\mu_i([\phi]).
\end{equation}

The second set of parameters $\sigma_1,\ldots,\sigma_{n-3}\colon\dtrelcharvar\to \R/2\pi\Z$ are called \emph{angle parameters}. Their definition is more subtle as one needs to consider the case where some triangles of the chain are degenerate to a point. First, assume that $a_i([\phi])\neq 0$ for every $i=0,\ldots,n-3$ or equivalently that $[\phi]$ lies in a regular fibre of the moment map. This ensures that the fixed points $B_i(\phi)$, $C_{i+1}(\phi)$, $C_{i+2}(\phi)$, abbreviated $B_i$, $C_{i+1}$, $C_{i+2}$ below, are distinct points for every $i$. In this case, we define, for $i=1,\ldots,n-3$, the angle $\gamma_i([\phi])\in \R/2\pi\Z$ to be the oriented angle between the geodesic rays $\overrightarrow{B_iC_{i+2}}$ and $\overrightarrow{B_iC_{i+1}}$ (see Figure ~\ref{fig:angle-area-parameters}):
\[
\gamma_i([\phi])\coloneqq\angle (\overrightarrow{B_iC_{i+2}},\overrightarrow{B_iC_{i+1}}).
\]
In less rigorous words, $\gamma_i$ is the angle between the triangle $\Delta_{i+1}$ and the triangle $\Delta_i$. In the case that some of the area parameters vanish, we define $\gamma_i([\phi])\in \R/2\pi\Z$ to be
\[
\gamma_i([\phi])\coloneqq\left\{\begin{array}{ll}
0, &\text{if } a_j([\phi])=0,\forall j<i,\\
\pi-\alpha_{i+2}/2, &\text{if }a_i([\phi])=0\text{ and } \exists j<i,\, a_j([\phi])> 0,\\
\angle (\overrightarrow{B_iC_{i+2}},\overrightarrow{B_iC_{m(i)+2}}), &\text{if }a_i([\phi])>0\text{ and } \exists j<i,\, a_j([\phi])> 0,
\end{array}\right.
\]
where $m(i)$ is the largest index smaller than $i$ such that $a_{m(i)}([\phi])>0$, see Figure ~\ref{fig:angle-area-parameters}. Whenever $[\phi]$ lies in a regular fibre of the moment map, then $m(i)=i-1$ for every $i$, showing that the definition of $\gamma_i$ is consistent. Note that the parameters $\gamma_i([\phi])$ are well-defined in the sense that if $a_{i}([\phi])>0$ then $B_i\neq C_{i+2}$ and $B_i\neq C_{m(i)+2}$. We finally define the angle parameters $\sigma_i([\phi])\in \R/2\pi\Z$ for $i=1,\ldots,n-3$ by
\[
\sigma_i([\phi])\coloneqq\sum_{j=1}^i\gamma_j([\phi]).
\]
Below, we will refer to both sets of parameters $\{\gamma_1,\ldots,\gamma_{n-3}\}$ and $\{\sigma_1,\ldots,\sigma_{n-3}\}$ as angle parameters, without distinction. The angle parameters $\gamma_i$ and $\sigma_i$ are analytic functions on $\regdtrelcharvar$ and may have points of discontinuity on the complement of $\regdtrelcharvar$.

\begin{figure}[h]
\begin{center}
\resizebox{7cm}{5.8cm}{%
\begin{tikzpicture}[framed,font=\sffamily,decoration={
    markings,
    mark=at position 1 with {\arrow{>}}}]]
    
\node[anchor=south west,inner sep=0] at (0,0) {\includegraphics[scale=.3, trim=0cm 7cm 1cm 7cm]{fig/step1}};

\draw (.7,1) node{\huge $\bullet$} node[below left]{$C_1$};
\draw (.7,4.4) node{\huge $\bullet$} node[above left]{$C_2$};
\draw (2.9,4.1) node{\huge $\bullet$} node[above right]{$C_3$};
\draw (5.55,1.45) node{\huge $\bullet$} node[above right]{$C_4$};
\draw (2.7,.4) node{\huge $\bullet$} node[below right]{$C_5$};

\draw (2,2.1) node{\huge $\bullet$} node[below]{$B_1$};
\draw (3.1,1.8) node{\huge $\bullet$} node[above right]{$B_2$};

\draw[thick, red!50!yellow, postaction={decorate}] (2.3,3) arc (70:110:1) node[midway, above]{\huge $\gamma_1$};

\draw[thick, red!50!yellow, postaction={decorate}] (4,1.8) arc (0:91:.9) node[midway, above right]{\huge $\gamma_2$};

\draw[violet] (1.25,2.4) node{\huge $a_0$};
\draw[violet] (2.65,2.5) node{\huge $a_1$};
\draw[violet] (3.6,1.4) node{\huge $a_2$};
\end{tikzpicture}
}
\resizebox{7cm}{5.8cm}{%
\begin{tikzpicture}[framed,font=\sffamily,decoration={
    markings,
    mark=at position 1 with {\arrow{>}}}]]
    
\node[anchor=south west,inner sep=0] at (0,0) {\includegraphics[scale=.3, trim=0cm 7cm 1cm 7cm]{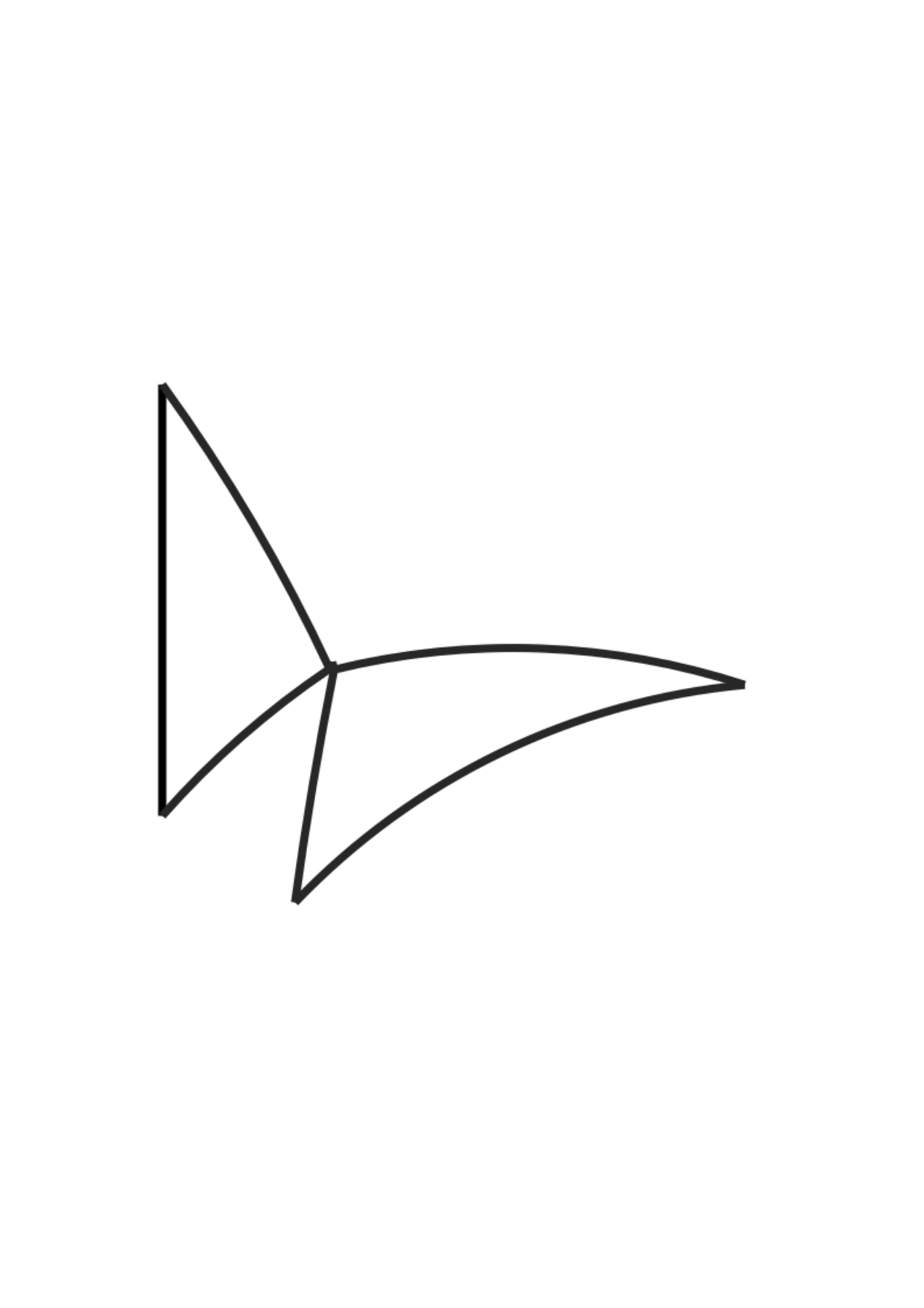}};

\draw (1.1,1.3) node{\huge $\bullet$} node[below left]{$C_1$};
\draw (1.1,4.15) node{\huge $\bullet$} node[above left]{$C_2$};

\draw (2.25,2.2) node{\huge $\bullet$};
\draw (2.15, 2.4) node[above right]{\tiny $B_1=C_3=B_2$};

\draw (5.05,2.15) node{\huge $\bullet$} node[above right]{$C_4$};
\draw (2.1,.7) node{\huge $\bullet$} node[below right]{$C_5$};

\draw[thick, red!50!yellow, postaction={decorate}] (4,2.4) arc (0:119:1.8) node[midway, above right]{\huge $\gamma_2$};

\draw[violet] (1.6,2.4) node{\huge $a_0$};
\draw[violet] (2.8,1.9) node{\huge $a_2$};

\draw[violet] (5,4.3) node{\large $a_1=0$};
\draw[red!50!yellow] (5,4.9) node{\large $\gamma_1=\pi-\alpha_3/2$};

\end{tikzpicture}
}
\end{center}
\caption{The angles $\gamma_i$ for two configurations of fixed points in the case $n=5$. The left picture corresponds to a representation in a regular fiber of the moment map. The right picture corresponds to a representation for which $a_1$ vanishes.}\label{fig:angle-area-parameters}
\end{figure}

Area and angle parameters completely characterize Deroin-Tholozan representations. To see this, we introduce the map
\begin{align}
\Cgot\colon& \dtrelcharvar\to \CP^{n-3}\nonumber\\
& [\phi] \mapsto \left[\sqrt{a_0([\phi])}:\sqrt{a_1([\phi])}e^{i\sigma_1([\phi])}:\ldots:\sqrt{a_{n-3}([\phi])}e^{i\sigma_{n-3}([\phi])}\right].\label{eq:definition_Cgot}
\end{align}
Recall that the area parameters are nonnegative and cannot vanish all at once. Moreover, recall that both the area and angle parameters are geometric invariants of $\mathfrak P([\phi])\in \ChainTriang$. We thus see that the map $\Cgot \colon \dtrelcharvar\to \CP^{n-3}$ is well-defined. 

Recall that the Deroin-Tholozan relative character variety has the structure of a symplectic toric manifold with symplectic form $1/\lambda\cdot \omega_\G$ and the torus action \eqref{eq:torus-action}. We equip $\CP^{n-3}$ with the Fubini-Study symplectic form $\omega_{\FS}$ of volume $\pi^{n-3}/(n-3)!$, see e.g.\ \cite{Can01} for more details on the symplectic nature of the complex projective space. We further equip $\CP^{n-3}$ with the $\T^{n-3}$-action defined in homogeneous coordinates by
\begin{equation}\label{eq:definition-torus-action-on-CPn}
\theta\cdot [z_0:z_1:\ldots:z_{n-3}]\coloneqq[z_0:e^{-i\theta_1}z_1:\ldots:e^{-i\theta_{n-3}}z_{n-3}], \quad \theta\in\T^{n-3}.
\end{equation}
This action is a maximal effective Hamiltonian torus action with moment map 
\begin{equation}\label{eq:definition-moment-map-torus-action-on-CPn}
\nu([z_0:z_1:\ldots:z_{n-3}])\coloneqq\left(\frac{\vert z_1\vert ^2}{2\vert z\vert},\ldots,\frac{\vert z_{n-3}\vert ^2}{2\vert z\vert}\right)\in \R^{n-3},
\end{equation}
where $\vert z\vert^2 \coloneqq \vert z_0\vert^2+\vert z_1\vert ^2+\ldots+\vert z_{n-3}\vert ^2$. The main result of this paper is

\begin{thm}[Theorem ~\ref{thm:symplectomorphism-intro}]\label{thm:symplectomorphism}
The map $\Cgot \colon \dtrelcharvar\to \CP^{n-3}$ defined in \eqref{eq:definition_Cgot} is an isomorphism of symplectic toric manifolds. In other words, $\Cgot$ is an equivariant diffeomorphism such that 
\[
\mu=\nu\circ \Cgot\quad\text{ and }\quad\Cgot^\ast \omega_\FS=1/\lambda\cdot\omega_\G.
\]
\end{thm}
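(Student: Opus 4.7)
My plan is to verify, in order, that (i) $\Cgot$ is equivariant and intertwines the moment maps, (ii) $\Cgot$ is a bijection, (iii) $\Cgot$ is a diffeomorphism, and finally (iv) $\Cgot^\star \omega_\FS = \lambda^{-1}\omega_\G$. The first two steps are direct translations of the polygonal model and the explicit description of the torus action on $\ChainTriang$; the main obstacle will be step (iii), namely checking smoothness across the walls of the moment polytope.

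The equivariance and moment-map compatibility follow from the explicit action on chains of triangles recorded in \eqref{eq:image-of-C_i-torus-action}--\eqref{eq:image-of-B_i-torus-action}. Rotating the sub-chain $\Delta_i,\ldots,\Delta_{n-3}$ about $B_i$ by $\overline\theta_i := \theta_i - \theta_{i-1}$ preserves every area $a_j$ and shifts each angle $\gamma_j$ by exactly $-\overline\theta_j$. Summing yields $\sigma_i(\theta\cdot[\phi]) = \sigma_i([\phi]) - \theta_i$, which matches \eqref{eq:definition-torus-action-on-CPn}; the identity $\mu = \nu\circ \Cgot$ is then immediate from \eqref{eq:relation-area-parameters-moment-map} and $|z|^2 = \sum a_i = \lambda$.

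For bijectivity I use the polygonal model. Given a point of $\CP^{n-3}$, normalize a representative so that $z_0 \geq 0$ and $|z|^2 = \lambda$, and read off $a_i := |z_i|^2$ together with $\sigma_i := \arg z_i$. The $a_i$ determine $\beta_1,\ldots,\beta_{n-3}$ via \eqref{eq:area-parameters-in-terms-of-angles}, and the interior-angle prescription of Lemma \ref{lem:description-chain-triangles-alpha} then fixes each triangle $\Delta_i$ up to isometry; the $\sigma_i$ pin down how to glue consecutive triangles along their shared vertex $B_i$. Lemma \ref{lem:description-chain-triangles-alpha} ensures the resulting chain lies in $\ChainTriang$, and applying $\mathfrak{P}^{-1}$ produces the unique preimage of the given point under $\Cgot$.

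The delicate step is smoothness. On $\regdtrelcharvar$ everything is analytic: the $a_i$ and $\sigma_i$ are analytic functions there and $\Cgot$ restricts to an analytic diffeomorphism onto the preimage of the interior of the simplex. The trouble is at representations whose chain contains degenerate triangles, where $\sigma_i$ can jump; however only the product $\sqrt{a_i}\,e^{i\sigma_i}$ enters $\Cgot$, and this goes to zero as $a_i \to 0$, handling continuity. For smoothness I would work equivariantly, reducing to a slice transverse to a stratum of the degenerate locus, and introduce real-analytic Cartesian coordinates $(x_i, y_i)$ on that slice in which $\sqrt{a_i}\,e^{i\sigma_i} = x_i + i y_i$ extends smoothly past $a_i = 0$. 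Once $\Cgot$ is known to be an equivariant diffeomorphism intertwining the moment maps, the symplectic identity (iv) follows on $\regdtrelcharvar$ from the uniqueness part of the Arnold-Liouville theorem: both $\Cgot^\star\omega_\FS$ and $\lambda^{-1}\omega_\G$ are $\T^{n-3}$-invariant symplectic forms sharing the action coordinates $\mu_i$ and the angle coordinates $\sigma_i$, hence take the canonical form $\sum d\mu_i\wedge d\sigma_i$; the equality then extends to all of $\dtrelcharvar$ by continuity.
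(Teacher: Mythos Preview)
Your outline matches the paper's structure almost exactly: equivariance and the moment-map identity are Proposition~\ref{prop:equivariance}, bijectivity is Lemmata~\ref{lem:surjective}--\ref{lem:continuity}, and the symplectic identity is deduced from the same Arnold--Liouville uniqueness principle you invoke (Lemma~\ref{lem:property-rigidity-troic-manifold}). Where your proposal falls short is step~(iii).

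The sentence ``introduce real-analytic Cartesian coordinates $(x_i,y_i)$ on that slice in which $\sqrt{a_i}\,e^{i\sigma_i}=x_i+iy_i$ extends smoothly'' is not a proof but a restatement of what has to be shown. Knowing abstractly (from the equivariant Darboux or Delzant local model) that \emph{some} smooth complex coordinate $w_i$ exists with $|w_i|^2=a_i$ does not tell you that your geometrically defined angle $\sigma_i$ agrees with $\arg w_i$ up to a function that extends smoothly across $a_i=0$. Any $S^1$-equivariant angle differs from the model angle by an $S^1$-invariant function $f$, and $\sqrt{a_i}\,e^{i\sigma_i}=w_i\,e^{if}$ is smooth only if $e^{if}$ extends smoothly through the origin---which fails, for instance, if $f$ behaves like $\sqrt{a_i}$. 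Since $\sigma_i$ is built from oriented angles between geodesic rays that collapse as $a_i\to 0$, there is no a priori reason its singular part is tame.

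The paper resolves this by an explicit computation rather than an abstract slice argument. It introduces the auxiliary function $\Gamma_p(z)$ measuring the angle from the vertical at $p$ to the ray $\overrightarrow{pz}$, rewrites each $\gamma_i$ as a telescoping combination of such $\Gamma$'s (equations~\eqref{eq:gamma_i-as-Gamma-Gamma-with-B_i-C_i}--\eqref{eq:gamma_i-as-Gamma-Gamma-with-B_i-B_i}), and then proves two analytic facts: (a) $\exp\!\big(i(\Gamma_z(p)-\Gamma_p(z))\big)$ extends smoothly across the diagonal $p=z$ (Lemma~\ref{lem:extension-exponential-Gamma-Gamma}, a one-line rational-function identity), and (b) the ratio $a_i/d(B_i,C_{i+2})^2$ extends to a strictly positive analytic function (Lemma~\ref{lem:sqrt-a_i-divided-by-distance-analytic}), so that the residual $\sqrt{a_i}$ exactly cancels the $1/\text{distance}$ singularity in $e^{i\Gamma_{B_{i+1}}(C_{i+2})}$ (Lemma~\ref{lem:Gamma-times-sqrt-area-differentiable}). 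These two cancellations, organised via the notion of \emph{chains of degeneracy}, are the actual content of Proposition~\ref{prop:differentiability}; your proposal does not supply a substitute for them.
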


The proof of Theorem ~\ref{thm:symplectomorphism} is unfolded, step by step, below. The main difficulty in the proof is showing that the map $\mathfrak C$ is differentiable at the points in the irregular fibres of the moment map --- that is, on the complement of $\regdtrelcharvar$. On these fibres the area parameters can vanish causing the angles parameters $\gamma_i$ to be discontinuous. 

A direct consequence of Theorem ~\ref{thm:symplectomorphism}, already pointed out in 
\cite{DeTh19}, says that the symplectic volume of the Deroin-Tholozan relative character variety is equal to
\[
\frac{(\lambda\pi)^{n-3}}{(n-3)!}.
\]

\subsection{A Wolpert-type formula}

Theorem ~\ref{thm:symplectomorphism} implies that the coordinates $$\{a_1,\ldots,a_{n-3},\sigma_1,\ldots,\sigma_{n-3}\}$$ are action-angle coordinates for the Deroin-Tholozan relative character variety. In particular, as a corollary of Theorem ~\ref{thm:symplectomorphism}, we prove that the coordinates are Darboux coordinates for the Goldman symplectic form.

\begin{cor}[Theorem ~\ref{thm:wolpert-like-formula}]\label{cor:analogue-Wolpert-formula}
The restriction of the Goldman form on $\dtrelcharvar$ to $\regdtrelcharvar$ can be written as
\[
\omega_\G=\frac{1}{2}\sum_{i=1}^{n-3}da_i\wedge d\sigma_i=\frac{1}{2}\sum_{i=1}^{n-3}d\gamma_i\wedge d\beta_i.
\]
\end{cor}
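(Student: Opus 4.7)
The plan is to deduce both equalities from Theorem \ref{thm:symplectomorphism}. The first formula is obtained by pulling back the standard action-angle form of $\omega_{\FS}$ on the open stratum of $\CP^{n-3}$; the second is then a purely algebraic manipulation using the linear relations between the coordinates $a_i,\sigma_i$ and $\beta_i,\gamma_i$.

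For the first equality, I would begin by observing that $\Cgot$ restricts to a diffeomorphism from $\regdtrelcharvar$ onto the open stratum $U=\{[z_0:\ldots:z_{n-3}]\in\CP^{n-3}:z_k\neq 0\text{ for all }k\}$, since the regular fibers of $\mu$ correspond under $\Cgot$ to those of $\nu$. In the affine chart $z_0\neq 0$ with $w_k=z_k/z_0=r_k e^{i\tau_k}$, a direct computation of $\omega_{\FS}=(i/2)\partial\bar\partial\log(1+\sum_k|w_k|^2)$ in polar form yields the classical action-angle expression $\omega_{\FS}=\sum_{k=1}^{n-3} d\nu_k\wedge d\tau_k$ on $U$, where $\nu_k$ is the $k$-th component of the moment map from \eqref{eq:definition-moment-map-torus-action-on-CPn}. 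Under $\Cgot$, the angle $\tau_k$ pulls back to $\sigma_k$ (by the very definition of $\Cgot$) and $\nu_k$ pulls back to $\mu_k=a_k/(2\lambda)$ by \eqref{eq:relation-area-parameters-moment-map}. Combined with $\omega_\G=\lambda\cdot\Cgot^\star\omega_{\FS}$ from Theorem \ref{thm:symplectomorphism}, this yields
\[
\omega_\G \;=\; \lambda\sum_{k=1}^{n-3} d\mu_k\wedge d\sigma_k \;=\; \frac{1}{2}\sum_{k=1}^{n-3} da_k\wedge d\sigma_k
\]
on $\regdtrelcharvar$.

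For the second equality, I would use the relations
\[
da_i = d\beta_{i+1} - d\beta_i, \qquad d\sigma_i = d\gamma_1+\cdots+d\gamma_i,
\]
coming from \eqref{eq:area-parameters-in-terms-of-angles} and the definition of $\sigma_i$ respectively. Since $\beta_{n-2}=\alpha_n$ is constant, $d\beta_{n-2}=0$, and an Abel summation gives
\begin{align*}
\sum_{i=1}^{n-3}(d\beta_{i+1}-d\beta_i)\wedge d\sigma_i
&= -\,d\beta_1\wedge d\sigma_1 \;-\; \sum_{i=2}^{n-3} d\beta_i\wedge(d\sigma_i-d\sigma_{i-1})\\
&= -\sum_{i=1}^{n-3} d\beta_i\wedge d\gamma_i \;=\; \sum_{i=1}^{n-3} d\gamma_i\wedge d\beta_i,
\end{align*}
where I used $d\sigma_1=d\gamma_1$ and $d\sigma_i-d\sigma_{i-1}=d\gamma_i$ for $i\ge 2$.

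The only substantive ingredient is the Darboux computation on $\CP^{n-3}$, which is classical for symplectic toric manifolds and could alternatively be justified abstractly by applying Arnold--Liouville to the maximal Hamiltonian torus action; once this is granted, both equalities reduce to routine bookkeeping. The one thing to keep in mind is that the identity is only claimed on $\regdtrelcharvar$, which is precisely the locus where the angles $\gamma_i$ (and hence $\sigma_i$) are smooth, so no issue of differentiability arises.
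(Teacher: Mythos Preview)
Your proof is correct and follows essentially the same approach as the paper's own proof: pull back the action-angle form of $\omega_{\FS}$ on the open torus stratum via Theorem~\ref{thm:symplectomorphism}, then perform an Abel summation using $da_i=d\beta_{i+1}-d\beta_i$ and $d\sigma_i-d\sigma_{i-1}=d\gamma_i$. The only cosmetic difference is that the paper phrases the first step in terms of the arguments $\theta_i$ of the homogeneous coordinates $z_i$ rather than the affine-chart ratios $w_k=z_k/z_0$, and leaves the summation-by-parts implicit.
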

\begin{proof}
At any point $[z_0:z_1:\ldots:z_{n-3}]\in \CP^{n-3}$ for which $z_i\neq 0$ for all $i=0,\ldots,n-3$, the Fubini-Study form can be written as
\[
\omega_\FS=\sum_{i=1}^{n-3}d\nu_i\wedge d\theta_i,
\]
where $(\nu_1,\ldots,\nu_{n-3})$ are the components of the moment map \eqref{eq:definition-moment-map-torus-action-on-CPn} and $\theta_i$ is the complex argument of $z_i$ (defined up to a constant). The coordinates $\{\nu_1,\ldots,\nu_{n-3},\theta_1,\ldots,\theta_{n-3}\}$ are action-angle coordinates for the integrable dynamics on $\CP^{n-3}$ defined by \eqref{eq:definition-torus-action-on-CPn}. Theorem ~\ref{thm:symplectomorphism} says that $\omega_\G=\lambda\cdot\Cgot^\ast \omega_\FS$. It also implies $\Cgot^\ast  d\nu_i=d\mu_i=da_i/(2\lambda)$ (where we used \eqref{eq:relation-area-parameters-moment-map}) and $\Cgot^\ast  d\theta_i=d\sigma_i$. Hence, on $\regdtrelcharvar$, it holds that
\[
\omega_\G=\lambda\cdot\Cgot^\ast \omega_\FS=\lambda\sum_{i=1}^{n-3}\Cgot^\ast  d\nu_i\wedge \Cgot^\ast  d\theta_i=\frac{1}{2}\sum_{i=1}^{n-3}da_i\wedge d\sigma_i.
\]
Using $da_i=d\beta_{i+1}-d\beta_i$, with $d\beta_0=d\beta_{n-2}=0$, and $d\sigma_{i+1}-d\sigma_i=d\gamma_{i+1}$, it follows that
\[
\sum_{i=1}^{n-3}da_i\wedge d\sigma_i=\sum_{i=1}^{n-3}d\gamma_i\wedge d\beta_i.
\]
\end{proof}

Corollary ~\ref{cor:analogue-Wolpert-formula} implies that, even if the definition of the coordinates $\{a_1,\ldots,a_{n-3},\sigma_1,\ldots,\sigma_{n-3}\}$ depends on the choice of a pants decomposition of $\Sigma_n$, the 2-form $\sum_{i=1}^{n-3}da_1\wedge d\sigma_i$ does not. This is because the Goldman symplectic form on the Deroin-Tholozan relative character variety is defined without any reference to a pants decomposition.

\section{Proof of Theorem ~\ref{thm:symplectomorphism}}\label{sec:proof-of-theorem}

In this section we prove that the map $\Cgot \colon \dtrelcharvar\to \CP^{n-3}$ defined in \eqref{eq:definition_Cgot} is an equivariant symplectomorphism.

\subsection{Homeomorphism property} We start by proving
\begin{prop}\label{prop:homeomorphism}
The map $\Cgot \colon \dtrelcharvar\to \CP^{n-3}$ is a homeomorphism.
\end{prop}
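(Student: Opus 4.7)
The plan is to establish that $\Cgot$ is a continuous bijection; the conclusion then follows automatically since $\dtrelcharvar$ is compact by Theorem \ref{thm:deroin-tholozan-symplecto} and $\CP^{n-3}$ is Hausdorff. The task thus splits into bijectivity and continuity, of which continuity is the harder half.

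\emph{Bijectivity.} For injectivity, suppose $\Cgot([\phi]) = \Cgot([\phi'])$. Taking moduli of corresponding homogeneous coordinates yields $a_i([\phi]) = a_i([\phi'])$ for every $i$. Let $i_0$ denote the smallest index with $a_{i_0} > 0$. The first clause in the definition of $\gamma_i$ forces $\gamma_k = 0$ for all $k \leq i_0$, so $\sigma_{i_0}([\phi]) = \sigma_{i_0}([\phi']) = 0$ and the $i_0$-th homogeneous coordinate is the positive real $\sqrt{a_{i_0}}$; this pins down the overall phase of a lift in $\C^{n-2}$ and gives $\sigma_j([\phi]) = \sigma_j([\phi'])$ for every $j$. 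Starting from $\beta_0 = 2\pi - \alpha_1$ and iterating \eqref{eq:area-parameters-in-terms-of-angles} recovers each $\beta_i$; Lemma \ref{lem:interior-angles-chain-of-triangles} then pins down each triangle $\Delta_i$ up to hyperbolic isometry, and the angles $\gamma_i$ determine the gluing at each common vertex $B_i$, reconstructing the chain of triangles uniquely and hence the class $[\phi]$ via the bijection $\mathfrak{P}$. For surjectivity, given a point of $\CP^{n-3}$, choose a representative with $\sum|z_i|^2 = \lambda$, set $a_i := |z_i|^2$ and $\sigma_i := \arg z_i$ (where $z_i \neq 0$), compute the $\beta_i$ from \eqref{eq:area-parameters-in-terms-of-angles}, and construct the clockwise oriented geodesic triangles $\Delta_i$ with interior angles prescribed by Lemma \ref{lem:interior-angles-chain-of-triangles}, glued at each $B_i$ with oriented angle $\gamma_i = \sigma_i - \sigma_{i-1}$. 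Lemma \ref{lem:description-chain-triangles-alpha} certifies that the resulting configuration lies in $\ChainTriang$, giving a preimage $[\phi]\in\dtrelcharvar$.

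\emph{Continuity.} On the open dense subset $\regdtrelcharvar$ the map $\Cgot$ is analytic: the areas $a_i$ are analytic on all of $\dtrelcharvar$, and the fixed points $B_j$, $C_j$ depend analytically on $[\phi]$ via $\mathfrak{P}$, hence so do the $\sigma_j$. The real work is continuity at a point $[\phi]$ where some triangles degenerate. Suppose $[\phi_k] \to [\phi]$ with $a_i([\phi]) = 0$ for some $i$. The coordinate $\sqrt{a_i([\phi_k])}\,e^{i\sigma_i([\phi_k])}$ automatically tends to $0$ regardless of the behavior of $\sigma_i$. For indices $j$ with $a_j([\phi]) > 0$ one must verify $\sigma_j([\phi_k]) \to \sigma_j([\phi])$. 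The chain of triangles varies continuously in the moduli space of point configurations, so the degenerating sub-chain $\Delta_{m(j)+1}, \ldots, \Delta_{j-1}$ collapses to a single limit point at $B_j$ while still selecting a well-defined limiting direction there, obtained by propagating the interior angles $\beta_k/2$ and $\pi - \beta_k/2$ of Lemma \ref{lem:interior-angles-chain-of-triangles} through the collapsing triangles; one checks that this limit agrees exactly with the reference ray $\overrightarrow{B_j C_{m(j)+2}}$ used in the third clause of the definition of $\gamma_j$, so $\sigma_j$ is continuous across the strata.

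The main obstacle is this continuity check across the strata of the moment polytope, where the definition of $\gamma_i$ switches between its three clauses. It requires a careful limit analysis of the shrinking triangles and of how their edge directions propagate through collapsing sub-chains. With bijectivity and continuity in hand, the standard fact that a continuous bijection from a compact space to a Hausdorff space is a homeomorphism concludes the argument.
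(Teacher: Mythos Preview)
Your approach is the paper's: show $\Cgot$ is a continuous bijection and invoke compactness of the source and Hausdorffness of the target. The bijectivity sketches are essentially Lemmata~\ref{lem:surjective} and~\ref{lem:injective}.

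There is one genuine slip in the continuity argument. You write that for indices $j$ with $a_j([\phi])>0$ ``one must verify $\sigma_j([\phi_k])\to\sigma_j([\phi])$'' and conclude that ``$\sigma_j$ is continuous across the strata''. This is false when the \emph{initial} triangles degenerate. If $a_0([\phi_0])=\cdots=a_{i_0-1}([\phi_0])=0$ and $a_{i_0}([\phi_0])>0$, then $\sigma_{i_0}([\phi_0])=0$ by the first clause defining $\gamma_i$, but for nearby $[\phi_k]$ with $a_0([\phi_k])>0$ the angle $\gamma_1([\phi_k])=\angle(\overrightarrow{B_1C_3},\overrightarrow{B_1C_2})$ has no limit as $\Delta_0$ collapses, so neither does $\sigma_{i_0}([\phi_k])$, nor any $\sigma_j([\phi_k])$ for $j\geq i_0$. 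What your propagation-of-interior-angles idea actually yields---and what suffices---is continuity of the \emph{differences} $\sigma_j-\sigma_{i_0}$ for each $j>i_0$ with $a_j([\phi_0])>0$; that is exactly continuity of $\Cgot$ in the affine chart $\{z_{i_0}\neq 0\}$. The paper's Lemma~\ref{lem:continuity} makes this explicit by fixing that chart up front and reducing to continuity of $\sigma_{i_{l+1}}-\sigma_{i_l}$ for consecutive nondegenerate indices, then decomposing the angle $\angle(\overrightarrow{B_jC_{j+2}},\overrightarrow{B_{i+1}C_{i+2}})$ along the collapsing sub-chain. Once you rephrase your target as continuity of these differences rather than of $\sigma_j$ itself, your argument and the paper's coincide.
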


To prove Proposition ~\ref{prop:homeomorphism}, we first show that $\Cgot$ is a continuous bijection. This is done in Lemmata ~\ref{lem:surjective}, ~\ref{lem:injective} and ~\ref{lem:continuity} below. Then, since $\dtrelcharvar$ is compact by Theorem ~\ref{thm:deroin-tholozan-symplecto} and $\CP^{n-3}$ is a Hausdorff space, it follows that $\Cgot$ is a homeomorphism.

\begin{rem}
This is the only place where we use that $\dtrelcharvar$ is compact. It is not hopeless to, alternatively, finish the proof of Proposition ~\ref{prop:homeomorphism} by computing the inverse map of $\Cgot$ and prove that it is continuous. This would be a proof that $\dtrelcharvar$ is homeomorphic to $\CP^{n-3}$ that does not use the compactness result of \cite{DeTh19}.
\end{rem}

\begin{lem}\label{lem:surjective}
The map $\Cgot \colon \dtrelcharvar\to \CP^{n-3}$ is surjective.
\end{lem}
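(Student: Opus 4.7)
The plan is to invert $\Cgot$ explicitly: given $p \in \CP^{n-3}$, build a chain of triangles in $\ChainTriang$ with prescribed area and angle parameters and then apply $\mathfrak P^{-1}$. Given $p = [z_0:\cdots:z_{n-3}]$, let $j$ be the smallest index with $z_j \neq 0$ and normalize the representative so that $z_j > 0$ and $|z_0|^2 + \cdots + |z_{n-3}|^2 = \lambda$. Set $a_i := |z_i|^2$, so the $a_i$ are nonnegative, sum to $\lambda$, and satisfy $a_0 = \cdots = a_{j-1} = 0 < a_j$; set $\tau_i := \arg(z_i) \in \R/2\pi\Z$ for each $i$ with $z_i \neq 0$ (in particular $\tau_j = 0$). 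It suffices to construct $[\phi] \in \dtrelcharvar$ with $a_i([\phi]) = a_i$ for all $i$ and $\sigma_i([\phi]) = \tau_i$ whenever $z_i \neq 0$, since then the two sides of $\Cgot([\phi]) = p$ agree coordinate by coordinate.

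Use the target areas to define the candidate angles $\beta_0 := 2\pi - \alpha_1$ and $\beta_{i+1} := \beta_i + a_i + 2\pi - \alpha_{i+2}$, consistent with \eqref{eq:area-parameters-in-terms-of-angles}. A direct estimate using $a_i \geq 0$, $\sum_i a_i = \lambda$, and the angles condition places each $\beta_i$ inside $(0, 2\pi)$, in agreement with \eqref{eq:range-beta_i}. Therefore the triple $(\beta_i/2,\, \pi - \alpha_{i+2}/2,\, \pi - \beta_{i+1}/2)$ sums to $\pi - a_i/2$ and determines, uniquely up to orientation-preserving isometry, a clockwise oriented hyperbolic triangle $\Delta_i$ when $a_i > 0$, or the degenerate point $B_i = C_{i+2} = B_{i+1}$ when $a_i = 0$. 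Assemble the chain inductively: place $\Delta_0$ at an arbitrary location in $\Hyp$ and, for $i \geq 1$, attach $\Delta_i$ to $\Delta_{i-1}$ at the common vertex $B_i$. When $a_i > 0$ the remaining degree of freedom is the direction of the ray $\overrightarrow{B_i C_{i+2}}$, which we choose so that the cumulative angle $\sum_{\ell \leq i} \gamma_\ell$ equals $\tau_i$. Once the chain is complete, Lemma~\ref{lem:description-chain-triangles-alpha} ensures that its isometry class belongs to $\ChainTriang$, and $[\phi] := \mathfrak P^{-1}$ of this class is the sought representation.

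The only subtle point is consistency in the presence of degenerate triangles. For $i < j$ one has $a_i = 0$ and $a_{i'} = 0$ for all $i' < i$, so the first case of the definition of $\gamma_i$ applies and gives $\gamma_i = 0$; the same first case applies at $i = j$ because all earlier areas still vanish, so $\gamma_j = 0$ and hence $\sigma_j = 0 = \tau_j$, matching the normalization $z_j > 0$ for free. For $i > j$ with $a_i = 0$ the second case forces $\gamma_i = \pi - \alpha_{i+2}/2$, but no matching is required since $z_i = 0$ makes the $i$-th coordinate of $\Cgot$ vanish regardless of $\sigma_i$. For $i > j$ with $a_i > 0$ the third case expresses $\gamma_i$ as the angle between two well-defined rays at $B_i$, which is a genuinely free parameter and can be set to $\tau_i - \sum_{\ell < i}\gamma_\ell$. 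Thus the normalization $z_j > 0$ is exactly what makes the three-case definition of $\gamma_i$ compatible with the prescribed data $\tau_i$, and the construction produces the desired preimage of $p$.
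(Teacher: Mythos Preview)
Your proof is correct and follows essentially the same strategy as the paper's: normalize the homogeneous coordinates so that the first nonzero entry is positive real with $\sum|z_i|^2=\lambda$, recursively build a chain of triangles with the prescribed areas $|z_i|^2$ and interior angles determined by the $\beta_i$, use the arguments of the $z_i$ to fix the free rotational parameter at each non-degenerate step, and invoke Lemma~\ref{lem:description-chain-triangles-alpha} together with $\mathfrak P^{-1}$. Your treatment of the degenerate cases via the three-branch definition of $\gamma_i$ is in fact somewhat more explicit than the paper's, which handles them more informally inside the step-by-step construction.
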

\begin{proof}
Let $[z_0:\ldots:z_{n-3}]\in \CP^{n-3}$. We may assume that $\vert z_0\vert^2+\ldots+\vert z_{n-3}\vert ^2=\lambda$ and that the first nonzero $z_i$ is a positive real number. The goal is to build a representation $\phi\colon \pi_1(\Sigma_n)\to G$ such that $[\phi]\in \dtrelcharvar$ and $\Cgot([\phi])=[z_0:\ldots:z_{n-3}]$. To do so, we build a chain of triangles satisfying the properties of Lemma ~\ref{lem:description-chain-triangles-alpha} such that the corresponding Deroin-Tholozan representation has the desired image under $\Cgot$. The triangles are constructed in $n-2$ steps starting with $\Delta_0$.
\begin{enumerate}
\setcounter{enumi}{-1}
\item \emph{Step 0.} Let $C_1$ be any point in $\Hyp$. If $z_0=0$, then we let $C_2\coloneqq B_1\coloneqq C_1$. Now, assume $z_0\neq 0$. By assumption, $z_0$ is a positive real number.  First, observe that $\vert z_0\vert^2/2=z_0^2/2\leq \lambda/2<\pi$. Further, let $\beta_1\coloneqq z_0^2-\alpha_1-\alpha_2+4\pi$. Note that, since $4\pi>\alpha_1+\alpha_2$ and $\lambda-\alpha_1-\alpha_2<-2\pi$, it holds $\beta_1\in (0,2\pi)$. In particular, there exists a clockwise oriented hyperbolic triangle $\Delta_0=\Delta(C_1,C_2,B_1)$ such that
\begin{itemize}
\item $\Delta_0$ has area $z_0^2/2$,
\item $\Delta_0$ has interior angles $\pi-\alpha_1/2$ at $C_1$ and $\pi-\alpha_2/2$ at $C_2$.
\end{itemize}
The triangle $\Delta_0$ is not uniquely determined as it can be arbitrarily rotated around $C_1$. We fix one such triangle $\Delta_0$. By construction, $\Delta_0$ has interior angle $\pi-\beta_1/2$ at $B_1$.

\item \emph{Step 1.} If $z_1=0$, then we let $C_3=B_2=B_1$. Now, assume $z_1\neq 0$. Again, observe that $\vert z_1\vert^2/2\leq \lambda/2<\pi$ and $\beta_2\coloneqq\vert z_1\vert^2-\alpha_3+\beta_1+2\pi\in(0,2\pi)$, because $-\alpha_3+\beta_1+2\pi\geq 6\pi-\alpha_1-\alpha_2-\alpha_3>0$ and $\lambda-\alpha_1-\alpha_2-\alpha_3<-4\pi$. So, there exists a clockwise oriented hyperbolic triangle $\Delta_1=\Delta(B_1,C_3,B_2)$ such that
\begin{itemize}
\item $\Delta_1$ has area $\vert z_1\vert ^2/2$,
\item $\Delta_1$ has interior angles $\pi-\alpha_3/2$ at $C_3$ and $\beta_1/2$ at $B_1$.
\end{itemize}
If $z_0=0$, then as before $\Delta_1$ can be arbitrarily rotated around $B_1$. If $z_0\neq 0$, then $\Delta_1$ is uniquely determined if we further impose
\begin{itemize}
\item the angle $\angle (\overrightarrow{B_1C_{3}},\overrightarrow{B_1C_2})$ is equal to the complex argument of $z_1$.
\end{itemize}
If $\Delta_1$ is non-degenerate, then by construction it has interior angle $\pi-\beta_2/2$ at $B_2$.

\item \emph{Step 2.} If $z_2=0$, then we let $C_4=B_3=B_2$. Now, assume $z_2\neq 0$. It holds $\vert z_2\vert^2/2\leq \lambda/2<\pi$ and $\beta_3\coloneqq\vert z_2\vert^2-\alpha_4+\beta_2+2\pi\in(0,2\pi)$. There exists a clockwise oriented hyperbolic triangle $\Delta_2=\Delta(B_2,C_4,B_3)$ such that
\begin{itemize}
\item $\Delta_2$ has area $\vert z_2\vert ^2/2$,
\item $\Delta_2$ has interior angles $\pi-\alpha_4/2$ at $C_4$ and $\beta_2/2$ at $B_2$.
\end{itemize}
If $z_0=0$ and $z_1=0$, then $\Delta_2$ can be arbitrarily rotated around $B_2$. If $z_0\neq 0$ and $z_1=0$, then $\Delta_2$ is uniquely determined if we impose
\begin{itemize}
\item the angle $\angle (\overrightarrow{B_2C_{4}},\overrightarrow{B_2C_2})$ is equal to the complex argument of $z_2$.
\end{itemize}
If $z_1\neq 0$, then $\Delta_2$ is uniquely determined if we impose
\begin{itemize}
\item the angle $\angle (\overrightarrow{B_2C_{4}},\overrightarrow{B_2C_3})$ is equal to the complex argument of $z_2$ minus the complex argument of $z_1$.
\end{itemize}
If $\Delta_2$ is non-degenerate, then by construction it has interior angle $\pi-\beta_3/2$ at $B_3$.

\end{enumerate}

This process can be repeated $n-5$ times until the point $C_{n}=B_{n-2}$ has been constructed. The last triangle in the chain, namely $\Delta_{n-3}=\Delta(B_{n-3},C_{n-1},C_{n})$, has area $\vert z_{n-3}\vert ^2/2$ and interior angles $\pi-\alpha_{n-1}/2$ at $C_{n-1}$ and $\beta_{n-3}/2$ at $B_{n-3}$, assuming $z_{n-3}\neq 0$. Since
\[
\vert z_{n-3}\vert ^2=\lambda-\vert z_{0}\vert ^2-\ldots-\vert z_{n-4}\vert ^2=\alpha_n+\alpha_{n-1}-\beta_{n-3}-2\pi,
\]
it follows that the interior angle of $\Delta_{n-3}$ at $C_n$ is $\pi-\alpha_n/2$. Therefore, the configuration of points $(C_1,\ldots,C_n,B_1,\ldots,B_{n-3})$ we just built satisfies the properties of Lemma ~\ref{lem:description-chain-triangles-alpha}. Its preimage under $\mathfrak P$ is the conjugacy class of a Deroin-Tholozan representation $[\phi]$. It follows from the construction that $\mathfrak C ([\phi])=[z_0:\ldots:z_{n-3}]$.
\end{proof}

\begin{lem}\label{lem:injective}
The map $\Cgot \colon \dtrelcharvar\to \CP^{n-3}$ is injective.
\end{lem}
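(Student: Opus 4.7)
The plan is to read the proof of Lemma \ref{lem:surjective} as an explicit recipe for recovering $\mathfrak P([\phi])\in\ChainTriang$ from $\Cgot([\phi])\in\CP^{n-3}$, and then to invoke the bijectivity of $\mathfrak P\colon \dtrelcharvar\to\ChainTriang$ established at the beginning of Section \ref{sec:polygonal-model}.

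Suppose $\Cgot([\phi])=\Cgot([\phi'])$. First I would normalize the homogeneous coordinates: since $a_0([\phi])+\ldots+a_{n-3}([\phi])=\lambda$ and likewise for $[\phi']$, one may pick the unique representative $(z_0,\ldots,z_{n-3})\in \C^{n-3}$ of $\Cgot([\phi])=\Cgot([\phi'])$ whose entries satisfy $\vert z_0\vert^2+\ldots+\vert z_{n-3}\vert^2=\lambda$ and whose first nonzero coordinate is a positive real number. Let $i_0$ be the index of that first nonzero coordinate. From the definition \eqref{eq:definition_Cgot} it follows, by the very definition of $\sigma_{i_0}$ in Section \ref{sec:complex-proj-coordinates} (namely $\sigma_{i_0}=0$ whenever all areas $a_j$ with $j<i_0$ vanish), that this same tuple $(z_0,\ldots,z_{n-3})$ arises when one writes out $\Cgot([\phi])$ or $\Cgot([\phi'])$ explicitly from the area and angle parameters. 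Consequently
\[
a_i([\phi])=a_i([\phi'])=\vert z_i\vert^2,\qquad i=0,\ldots,n-3,
\]
and $\sigma_i([\phi])=\sigma_i([\phi'])$ for every $i$ such that $a_i([\phi])>0$.

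Next I would argue that this common data determines $\mathfrak P([\phi])\in\ChainTriang$. By Lemma \ref{lem:description-chain-triangles-alpha} all the interior angles of every non-degenerate triangle $\Delta_i$ in the chain are already fixed by the areas $a_0,\ldots,a_{n-3}$ and the tuple $\alpha$. Following step by step the construction used in the proof of Lemma \ref{lem:surjective}: place $C_1$ anywhere in $\Hyp$; the triangle $\Delta_0$ is then determined up to rotation around $C_1$; the relative position of the next non-degenerate triangle $\Delta_j$ with respect to $\Delta_{m(j)}$ is forced by the area $a_j$, the known interior angles and the oriented angle $\gamma_j$, which is encoded in $\sigma_j-\sigma_{m(j)}$; and any degenerate triangle collapses to its apex. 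This produces the entire chain of triangles uniquely up to the diagonal $G$-action on $\Hyp^n\times\Hyp^{n-3}$. Applied to $[\phi]$ and $[\phi']$ it produces the same equivalence class, i.e.\ $\mathfrak P([\phi])=\mathfrak P([\phi'])$, and injectivity of $\mathfrak P$ yields $[\phi]=[\phi']$.

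The only subtle point, and what I expect to be the main obstacle, is bookkeeping across the degenerate cases: when some $a_i$ vanish, the angles $\sigma_i$ are declared by the case distinction in Section \ref{sec:complex-proj-coordinates}, and one must check that these conventions are precisely compatible with the iterative construction of the chain. The cleanest way is to argue by induction on $i$, treating separately (i) all previous areas vanish, in which case the new sub-chain can be freely rotated around $B_i$ and $\gamma_i=0$ matches the normalization of $(z_0,\ldots,z_{n-3})$; (ii) the new triangle degenerates, in which case there is nothing to place and the convention $\gamma_i=\pi-\alpha_{i+2}/2$ plays no role in the reconstruction; (iii) the generic case already handled in Lemma \ref{lem:surjective}.
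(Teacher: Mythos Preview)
Your proposal is correct and follows essentially the same approach as the paper: both arguments use that $\Cgot([\phi])=\Cgot([\phi'])$ together with the normalization $\sum a_i=\lambda$ forces $a_i([\phi])=a_i([\phi'])$ and (up to a global phase killed by the first nonzero coordinate) $\sigma_i([\phi])=\sigma_i([\phi'])$, and then conclude that the two chains of triangles are isometric, whence $\mathfrak P([\phi])=\mathfrak P([\phi'])$. Your framing via the surjectivity construction as a reconstruction recipe is a cosmetic variation of the paper's direct isometry argument, and your explicit observation that $\sigma_{i_0}=0$ by the case distinction in the definition of $\gamma_i$ handles the degenerate case $a_0=0$ a touch more cleanly than the paper does.
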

\begin{proof}
Let $[\phi]$ and $[\phi']$ be two elements of $\dtrelcharvar$ such that $\Cgot([\phi])=\Cgot([\phi'])$. We want to prove that $[\phi]=[\phi']$. To achieve this, it is sufficient to check that the chain of triangles built from $\mathfrak P([\phi])$ and $\mathfrak P([\phi'])$ are isometric because $\mathfrak P$ is injective. 

Let $a_i=\alpha_{i+2}+\beta_{i+1}-\beta_i-2\pi$ and $a_i'=\alpha_{i+2}+\beta_{i+1}'-\beta_i'-2\pi$ be the area parameters associated to $[\phi]$ and $[\phi']$, respectively. Similarly, let $\gamma_i$, $\sigma_i$ and $\gamma_i'$, $\sigma_i'$ be their respective angle parameters. Recall that $a_0+\ldots+a_{n-3}=a_0'+\ldots+a_{n-3}'=\lambda$. By definition of $\Cgot$ (see \eqref{eq:definition_Cgot}), since we assume $\Cgot([\phi])=\Cgot([\phi'])$, it follows that  $a_i=a_i'$ for every $i=0,\ldots,n-3$. Moreover, it also implies $\sigma_i=\sigma_i'+\sigma$ for every $i=1,\ldots,n-3$, where $\sigma$ is some constant. Note that, if $a_0=a_0'>0$, then $\sigma=0$. 

From $a_i=a_i'$, it follows $\beta_i=\beta_i'$ for every $i$. Thus, by Lemma ~\ref{lem:interior-angles-chain-of-triangles}, the oriented triangles $\Delta_i$ and $\Delta_i'$ inside $\Hyp$ have the same interior angles and are therefore isometric for every $i$. To conclude that the two chains are isometric, it suffices to check that the angles between consecutive non-degenerate triangles in each chain are equal. Since $\sigma_i=\sigma_i'+\sigma$, we have $\gamma_1=\gamma_1'+\sigma$ and $\gamma_i=\gamma_i'$ for every $i\geq 2$. Since $\sigma=0$ whenever $a_0=a_0'>0$, this shows that the angles between the corresponding pairs of consecutive non-degenerate triangles in each chain are equal. We conclude that $\mathfrak P([\phi])=\mathfrak P([\phi'])$ and thus $[\phi]=[\phi']$.
\end{proof}

\begin{lem}\label{lem:continuity}
The map $\Cgot \colon \dtrelcharvar\to \CP^{n-3}$ is continuous.
\end{lem}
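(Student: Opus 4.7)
The plan is to establish continuity first on $\regdtrelcharvar$, where it is immediate: both the area parameters $a_i$ and the angle parameters $\sigma_i$ are analytic there, as stated in Section \ref{sec:complex-proj-coordinates}. The real work lies at a point $[\phi_0] \in \dtrelcharvar \setminus \regdtrelcharvar$, where some triangles in the chain built from $\mathfrak P([\phi_0])$ are degenerate and the angle parameters may be discontinuous.

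Fix such a point $[\phi_0]$ and let $\mathcal{I} := \{i : a_i([\phi_0]) > 0\}$, which is non-empty because $\sum_i a_i = \lambda > 0$. Pick any $i_0 \in \mathcal{I}$. In a neighbourhood $U$ of $[\phi_0]$ the continuous function $a_{i_0}$ stays strictly positive, so the $i_0$-th homogeneous coordinate of $\Cgot([\phi])$ is nonzero on $U$. The image $\Cgot([\phi])$ can therefore be represented on $U$ by the affine coordinates
$$\zeta_j([\phi]) := \sqrt{a_j([\phi])/a_{i_0}([\phi])}\, e^{i(\sigma_j([\phi]) - \sigma_{i_0}([\phi]))}, \quad j \neq i_0,$$
with the convention $\sigma_0 := 0$. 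Continuity of $\Cgot$ at $[\phi_0]$ reduces to the continuity of each $\zeta_j$ at $[\phi_0]$.

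For $j \notin \mathcal{I}$, the modulus $\sqrt{a_j/a_{i_0}}$ is continuous and vanishes at $[\phi_0]$, so $\zeta_j \to 0$ regardless of the behaviour of the argument. For $j \in \mathcal{I}$ the modulus is continuous and non-vanishing, and the task reduces to showing that the difference $\sigma_j - \sigma_{i_0}$, which equals $\sum_{l=i_0+1}^{j}\gamma_l$ when $j > i_0$, is continuous at $[\phi_0]$. Both endpoint triangles $\Delta_{i_0}$ and $\Delta_j$ are non-degenerate at $[\phi_0]$; the difficulty is that intermediate triangles $\Delta_l$ with $i_0 < l < j$ may be degenerate.

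This last point is the main obstacle. Along a sequence $[\phi_k] \to [\phi_0]$ the individual $\gamma_l$ may jump between the three cases of their definition as intermediate triangles collapse. The plan is to verify by a direct geometric analysis of the chain that these jumps cancel pairwise: when $\Delta_l$ collapses, $\gamma_l$ takes the value $\pi - \alpha_{l+2}/2$, while the reference ray in the definition of $\gamma_{l+1}$ switches from $\overrightarrow{B_{l+1}C_{l+2}}$ to $\overrightarrow{B_{l+1}C_{m(l+1)+2}}$; the value $\pi - \alpha_{l+2}/2$ is precisely the interior angle at $C_{l+2}$ of the nearby non-degenerate $\Delta_l$ by Lemma \ref{lem:interior-angles-chain-of-triangles}, so the two contributions compensate and the telescoping sum $\sigma_j - \sigma_{i_0}$ converges to its value at $[\phi_0]$. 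Combined with the vanishing of $\zeta_j$ for $j \notin \mathcal{I}$, this gives the continuity of each $\zeta_j$, and hence of $\Cgot$, at $[\phi_0]$.
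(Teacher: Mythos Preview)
Your proposal is correct and follows essentially the same route as the paper: reduce to an affine chart indexed by a non-degenerate coordinate, dispose of the vanishing coordinates via the modulus, and show that $\sigma_j-\sigma_{i_0}$ is continuous by a geometric cancellation along the chain. The paper carries out that last step more explicitly by decomposing the angle $\angle(\overrightarrow{B_jC_{j+2}},\overrightarrow{B_{i+1}C_{i+2}})$ into a telescoping sum of $\gamma_m$'s and interior angles $\pi-\alpha_{m+1}/2$, whereas your final paragraph only sketches this; filling in that decomposition would make your argument complete.
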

\begin{proof}
The continuity of $\Cgot$ is immediate at any point in a regular fibre of the moment map. The task is more subtle when some area parameters vanish because of the discontinuity of the angle parameters $\gamma_i$.

Let $[\phi_0]\in \dtrelcharvar$. We prove that $\Cgot$ is continuous at $[\phi_0]$. Let $i\geq 0$ be the smallest index such that $a_i([\phi_0])>0$. We work in the chart $\{z_i\neq 0\}$ of $\CP^{n-3}$. Continuity is guaranteed for every index $j$ such that $a_j([\phi_0])=0$. It thus suffices to prove that $\sigma_j([\phi])-\sigma_{i}([\phi])$ is continuous around $[\phi_0]$ for every index $j>i$ such that $a_j([\phi])>0$. Let $i=i_1<i_2<\ldots<i_d$ denote the indices such that $a_{i_l}([\phi])>0$. Because of telescopic cancellations, it is sufficient to prove that $\sigma_{i_{l+1}}([\phi])-\sigma_{i_l}([\phi])$ is continuous around $[\phi_0]$ for every $l=1,\ldots,d-1$. 

We treat the case $l=1$. Let $i=i_1<i_2=j$. We first consider the case $j-i=1$ first. In this case, 
\[
\sigma_j([\phi])-\sigma_{i}([\phi])=\gamma_{i+1}([\phi]).
\]
Since $a_{i+1}([\phi_0])>0$ and $a_i([\phi_0])>0$ by assumption, the angle parameter $\gamma_{i+1}([\phi])$ is a continuous function around $[\phi_0]$. 

Now, we consider the general case $j-i\geq 2$. Recall that it corresponds the situation where $a_i([\phi_0])>0$, $a_j([\phi_0])>0$ and $a_l([\phi_0])=0$ for all $i<l<j$. For clarity, we let $[\phi_k]$ be a sequence that converges to $[\phi_0]$. We will assume that $a_{\ell}([\phi_k])>0$ for every $k$ and every $i\leq \ell\leq j$. The argument below can be adapted to the case where, for some $i<\ell<j$, $a_{\ell}([\phi_k])=0$ for infinitely many $k$. Since we assume $a_j([\phi_k])>0$ and $a_i([\phi_k])>0$, it holds $B_j(\phi_k)\neq C_{j+2}(\phi_k)$ and $B_{i+1}(\phi_k)\neq C_{i+2}(\phi_k)$. For $k$ large enough, we may assume that the geodesics $\overrightarrow{B_j(\phi_k)C_{j+2}(\phi_k)}$ and $\overrightarrow{B_{i+1}(\phi_k)C_{i+2}(\phi_k)}$ intersect, because they do so at the limit. Recall that, by definition, $\gamma_j=\angle\big(\overrightarrow{B_jC_{j+2}},\overrightarrow{B_{i+1}C_{i+2}}\big)$ (see Figure ~\ref{fig:angle-area-parameters}) and so
\begin{equation}\label{eq:01}
\gamma_j([\phi_0])=\lim_{k\to\infty}\angle\big(\overrightarrow{B_j(\phi_k)C_{j+2}(\phi_k)},\overrightarrow{B_{i+1}(\phi_k)C_{i+2}(\phi_k)}\big).
\end{equation}
The angle $\angle\big(\overrightarrow{B_jC_{j+2}},\overrightarrow{B_{i+1}C_{i+2}}\big)$ can be decomposed as follows:
\[
\angle\big(\overrightarrow{B_jC_{j+2}},\overrightarrow{B_{j}C_{j+1}}\big)
+\angle\big(\overrightarrow{B_{j}C_{j+1}},\overrightarrow{B_{j-1}C_{j+1}}\big)
+\angle\big(\overrightarrow{B_{j-1}C_{j+1}},\overrightarrow{B_{j-1}C_{j}}\big)
+\ldots
+ \angle\big(\overrightarrow{B_{i+1}C_{i+3}},\overrightarrow{B_{i+1}C_{i+2}}\big).
\]
Using
\[
\angle\big(\overrightarrow{B_m(\phi_k)C_{m+2}(\phi_k)},\overrightarrow{B_{m}(\phi_k)C_{m+1}(\phi_k)}\big)=\gamma_m([\phi_k])
\]
and
\[
\angle\big(\overrightarrow{B_{m-1}(\phi_k)C_{m+1}(\phi_k)},\overrightarrow{B_{m}(\phi_k)C_{m+1}(\phi_k)}\big)=\pi-\frac{\alpha_{m+1}}{2},
\]
and recalling that
\[
\gamma_m([\phi_0])=\pi-\frac{\alpha_{m+2}}{2}, \quad m=i+1,\ldots,j-1,
\]
we conclude
\begin{align*}
\angle\big(\overrightarrow{B_j(\phi_k)C_{j+2}(\phi_k)},\overrightarrow{B_{i+1}(\phi_k)C_{i+2}(\phi_k)}\big)=\sigma_j([\phi_k])-\sigma_{i}([\phi_k])-\gamma_{j-1}([\phi_0])-\ldots-\gamma_{i+1}([\phi_0]).
\end{align*}
Because of \eqref{eq:01} we conclude that $\sigma_j([\phi_k])-\sigma_{i}([\phi_k])$ converges to $\gamma_j([\phi_0])+\ldots+\gamma_{i+1}([\phi_0])=\sigma_j([\phi_0])-\sigma_{i}([\phi_0])$.
\end{proof}

\subsection{Equivariance property} We prove

\begin{prop}\label{prop:equivariance}
The map $\Cgot \colon \dtrelcharvar\to \CP^{n-3}$ is equivariant with respect to the torus actions \eqref{eq:definiton-torus-action-on-representations} and \eqref{eq:definition-torus-action-on-CPn}. Moreover,
\[
\mu=\nu\circ\Cgot,
\]
where $\mu$ and $\nu$ are the moment maps defined in \eqref{eq:definiton-moment-map} and \eqref{eq:definition-moment-map-torus-action-on-CPn}.
\end{prop}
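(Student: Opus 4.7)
The plan is to establish the two assertions separately, each essentially by unpacking definitions on the regular locus and then extending.

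The moment-map identity $\mu=\nu\circ\mathfrak{C}$ is immediate from a telescoping computation. By \eqref{eq:area-parameters-in-terms-of-angles} together with the boundary conventions $\beta_0=2\pi-\alpha_1$ and $\beta_{n-2}=\alpha_n$, the sum $a_0+\ldots+a_{n-3}$ telescopes to the scaling factor $\lambda$, so the representative of $\mathfrak{C}([\phi])$ in \eqref{eq:definition_Cgot} satisfies $|z_i|^2=a_i([\phi])$ and $|z|^2=\lambda$. Substituting into \eqref{eq:definition-moment-map-torus-action-on-CPn} and invoking \eqref{eq:relation-area-parameters-moment-map} gives $\nu_i(\mathfrak{C}([\phi]))=a_i/(2\lambda)=\mu_i([\phi])$.

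For equivariance, I would work first on the open dense subset $\regdtrelcharvar$ where all area parameters are strictly positive. Fix $[\phi]$ there and $\theta\in\T^{n-3}$. Invariance of the area parameters, $a_i(\theta\cdot[\phi])=a_i([\phi])$, is immediate because the action \eqref{eq:image-of-C_i-torus-action}--\eqref{eq:image-of-B_i-torus-action} applies an isometry of $\Hyp$ to each triangle $\Delta_i$. The main content is the transformation rule for the angle parameters. Decomposing $\theta$ into its successive rotation steps by $\overline\theta_1,\ldots,\overline\theta_{n-3}$, the key observation is that exactly one step affects the relative angle $\gamma_i=\angle(\overrightarrow{B_iC_{i+2}},\overrightarrow{B_iC_{i+1}})$ at the vertex $B_i$, namely the $i$-th step: for $j<i$ the rotation by $\overline\theta_j$ moves $\Delta_{i-1}$ and $\Delta_i$ by the same isometry (both lie in the sub-chain $\Delta_j,\ldots,\Delta_{n-3}$), for $j>i$ it fixes them both, and for $j=i$ it rotates $\Delta_i,\ldots,\Delta_{n-3}$ by $\overline\theta_i$ about $B_i$ while leaving $\Delta_{i-1}$ untouched. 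With the oriented-angle convention this shifts $\gamma_i$ by $-\overline\theta_i$, hence $\sigma_i$ by $-\theta_i$ modulo $2\pi$. Comparing \eqref{eq:definition_Cgot} with \eqref{eq:definition-torus-action-on-CPn} then yields $\mathfrak{C}(\theta\cdot[\phi])=\theta\cdot\mathfrak{C}([\phi])$ on $\regdtrelcharvar$.

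The main obstacle will be the final step: extending the equivariance from $\regdtrelcharvar$ to the boundary, where some triangles degenerate and the $\gamma_i$ are defined only by the conventional values $0$ or $\pi-\alpha_{i+2}/2$. Rather than redoing the geometric bookkeeping fibre-by-fibre (which is delicate because, when a sequence of successive triangles collapses to a single point, several rotation steps pile up at the same center and the individual $\gamma_i$ jump by constants that exactly reorganize themselves after projective rescaling), I would invoke Lemma \ref{lem:continuity} together with the continuity of both torus actions to extend the identity $\mathfrak{C}(\theta\cdot[\phi])=\theta\cdot\mathfrak{C}([\phi])$ from the dense regular locus to all of $\dtrelcharvar$. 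A secondary concern is purely sign-tracking: verifying that the orientation convention used for $\angle(\cdot,\cdot)$ makes the rotation of the $\Delta_i$-side ray by $+\overline\theta_i$ correspond to the multiplication of the $i$-th projective coordinate by $e^{-i\theta_i}$.
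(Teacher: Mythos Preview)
Your proposal is correct and follows essentially the same approach as the paper: verify the transformation rules $a_i(\theta\cdot[\phi])=a_i([\phi])$ and $\gamma_i(\theta\cdot[\phi])=\gamma_i([\phi])-\overline\theta_i$ on the regular locus from the explicit description of the action on chains of triangles, then extend by continuity using Lemma~\ref{lem:continuity}; the moment-map identity is likewise obtained from $\sum a_i=\lambda$ and \eqref{eq:relation-area-parameters-moment-map}. The only minor difference is that you establish the moment-map identity globally from the outset rather than on the regular locus, which is harmless since that computation requires no regularity assumption.
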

\begin{proof}
Both torus actions and both moment maps are continuous. The map $\Cgot$ is continuous by Lemma ~\ref{lem:continuity}. It thus suffices to check the conclusion of the proposition on the dense open subset given by the regular fibres of the moment map $\mu$. Let $[\phi]$ be an element in a regular fibre and let $\theta\in\T^{n-3}$. The relations \eqref{eq:image-of-C_i-torus-action} and \eqref{eq:image-of-B_i-torus-action} (see also Figure ~\ref{fig:torus-action-chain-of-triangles}) show that, for any $i=0,\ldots,n-3$ and $j=1,\ldots,n-3$,
\[
a_i(\theta\cdot[\phi])=a_i([\phi])\quad \text{and}\quad \gamma_j(\theta\cdot[\phi])=\gamma_j([\phi])-\overline{\theta}_j.
\]
Hence $\sigma_j(\theta\cdot [\phi])=\sigma_j([\phi])-\theta_j$. This implies $\Cgot(\theta\cdot[\phi])=\theta\cdot\Cgot([\phi])$. Observe further that, for every $i=1,\ldots,n-3$, it holds that
\[
\nu_i\circ\Cgot([\phi])=\frac{a_i([\phi])}{2\lambda}=\mu_i([\phi]),
\]
where we used that the sum of the area parameters is equal to $\lambda$.
\end{proof}

\subsection{Differentiablity property} In this section, we prove that 

\begin{prop}\label{prop:differentiability}
The map $\Cgot \colon \dtrelcharvar\to \CP^{n-3}$ is continuously differentiable.
\end{prop}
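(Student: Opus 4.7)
The proof naturally stratifies according to the vanishing loci of the area parameters, mirroring the case analysis of Lemma~\ref{lem:continuity}. On the open dense stratum $\regdtrelcharvar$, every $a_i$ is positive and the triangle vertices $B_i, C_{i+1}, C_{i+2}$ are pairwise distinct, so $a_i$ and $\gamma_i$ are built from real-analytic operations (signed area of a hyperbolic triangle, oriented angle between two geodesic rays) applied to a representation whose image in $G$ depends analytically on $[\phi]$. Consequently $\sqrt{a_i}$ and $\sigma_i = \gamma_1 + \ldots + \gamma_i$ are real-analytic, so $\Cgot$ is $C^\infty$ on $\regdtrelcharvar$ and only the behaviour at the irregular fibres requires work.

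To handle a point $[\phi_0]$ on an irregular fibre, I would set $I := \{i : a_i([\phi_0]) > 0\}$. Since $\sum a_i = \lambda > 0$ the set $I$ is non-empty, and one can pick $i_0 \in I$ and work in the affine chart of $\CP^{n-3}$ obtained by normalising the $i_0$-th homogeneous coordinate. In this chart $\Cgot$ has components
\[
w_j([\phi]) \;=\; \sqrt{a_j([\phi])/a_{i_0}([\phi])}\, e^{i(\sigma_j([\phi]) - \sigma_{i_0}([\phi]))}, \qquad j \neq i_0.
\]
Because $a_{i_0}$ is analytic and bounded below by a positive constant in a neighbourhood of $[\phi_0]$, checking $C^1$-smoothness of $w_j$ reduces to checking $C^1$-smoothness of $W_j([\phi]) := \sqrt{a_j([\phi])}\, e^{i(\sigma_j - \sigma_{i_0})}$. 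For $j \in I$ this is immediate from the first step; the nontrivial case is $j \notin I$, where the argument $\sigma_j$ is genuinely discontinuous at $[\phi_0]$ but $\sqrt{a_j}$ vanishes and the product is expected to extend smoothly.

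For the degenerate indices $j \notin I$, my plan is to reinterpret $W_j$ geometrically. Conjugating $\phi$ so that $B_{i_0}$ is pinned at a basepoint $p \in \Hyp$ and $\overrightarrow{B_{i_0}C_{i_0+2}}$ lies along a fixed reference tangent direction, the angle $\sigma_j - \sigma_{i_0}$ becomes, up to a sum of smoothly varying interior angles at the intermediate non-degenerate triangles (exactly the telescopic decomposition used at the end of the proof of Lemma~\ref{lem:continuity}), the polar angle at $B_j$ of the tangent vector pointing toward $C_{j+2}$. Using Lemma~\ref{lem:description-chain-triangles-alpha} to parametrize $\Delta_j$ by its analytic interior angle $\beta_j/2$ at $B_j$ (which stays in an open subinterval of $(0,2\pi)$ near $[\phi_0]$) together with the hyperbolic side length $\ell_j$ along $\overrightarrow{B_jC_{j+2}}$, a Taylor expansion at $\ell_j = 0$ shows that $\sqrt{2\,\area(\Delta_j)}$ is to leading order proportional to $\ell_j$; combined with the smooth dependence of the corresponding tangent vector at $B_j$, this yields the desired $C^1$ extension of $W_j$ through the degeneration locus, with continuous partial derivatives.

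The main obstacle is the simultaneous presence of the square root in the modulus and the discontinuity of the argument at degeneration. A cleaner alternative, which bypasses the hyperbolic-trigonometric expansion, is to invoke Delzant's local normal form: the equivariance of $\Cgot$ and the matching of moment-map images (Proposition~\ref{prop:equivariance}), together with the standard Delzant chart $(\C^k \times \T^{n-3-k}, \omega_\FS)$ of $\CP^{n-3}$ near each boundary stratum of the moment polytope, force $\Cgot$ to agree locally with this chart up to an equivariant homeomorphism intertwining the moment maps, and such a homeomorphism is automatically smooth by the uniqueness part of Delzant's classification. Either route ultimately reduces the entire proof to verifying $C^1$-smoothness in a single explicit coordinate system centred at each irregular orbit, which I view as the key computational step.
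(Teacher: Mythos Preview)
Your first route is broadly the paper's own strategy, and your reduction to the functions $W_j=\sqrt{a_j}\,e^{i(\sigma_j-\sigma_{i_0})}$ is exactly right. The paper carries this out by (i) an explicit hyperbolic law-of-cosines computation showing that $a_j$ divided by the squared Euclidean distance between $B_j$ and $C_{j+2}$ extends analytically with a strictly positive value, so that $\sqrt{a_j}$ behaves like that distance; (ii) an explicit angle function $\Gamma_p\colon\Hyp\setminus\{p\}\to\R/2\pi\Z$ through which each $\gamma_i$ is expressed; and (iii) a direct verification that $\sqrt{a_j}\,e^{i\Gamma_{B_{j+1}}(C_{j+2})}$ is $C^1$, together with the separate fact that $e^{i(\Gamma_z(p)-\Gamma_p(z))}$ extends smoothly across the diagonal $p=z$, which is what makes the telescoping through a chain of degenerate triangles of length $\ge 2$ go through. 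Your sketch does not yet handle this last point: when several consecutive $\Delta_m$ degenerate simultaneously, there are no ``intermediate non-degenerate triangles'' contributing the interior angles you invoke, and the angle contributions from the degenerate pieces must be controlled separately.

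Your Delzant alternative has a genuine gap. Delzant's uniqueness asserts only that two compact symplectic toric manifolds with the same moment polytope admit \emph{some} equivariant symplectomorphism; it does not say that every equivariant moment-map-preserving homeomorphism between them is smooth. A counterexample on the local model $\C$ with the standard $S^1$-action and moment map $\mu(z)=\tfrac12|z|^2$: set $F(z)=e^{\,i\,g(|z|)}z$ with $g(r)=r\sin(1/r)$ for $r>0$ and $g(0)=0$. Then $F$ is an $S^1$-equivariant homeomorphism with $\mu\circ F=\mu$, smooth on $\C\setminus\{0\}$, and differentiable at $0$; but along the positive real axis one computes $dF_x(\partial_x)=e^{ig(x)}\bigl(1+i\,x g'(x)\bigr)$ with $xg'(x)=x\sin(1/x)-\cos(1/x)$, which has no limit as $x\to 0^+$, so $F$ is not $C^1$. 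Hence equivariance together with the relation $\mu=\nu\circ\Cgot$ cannot replace the explicit coordinate verification you call ``the key computational step''.
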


The map $\Cgot$ restricted to $\regdtrelcharvar$ is analytic because both the area and angle parameters are analytic functions of $\regdtrelcharvar$. As mentioned earlier, two factors lead to complications when trying to prove differentiability on the complement of $\regdtrelcharvar$. The first one is the presence of square roots on the area parameters. The second one is the discontinuity of the angle parameters whenever triangles are degenerate. 

The proof that $\Cgot$ is a continuous function (Lemma ~\ref{lem:continuity}) showed the importance of considering consecutive indices for which the corresponding area parameters vanish. This leads to the notion of \emph{chain of degeneracy} for $[\phi]\in\dtrelcharvar$ by which we mean a \emph{maximal} collection of consecutive degenerate triangles in the chain built from $\mathfrak P([\phi])$. A chain of degeneracy is said to be \emph{of type $(j,k)$} if the maximal collection of consecutive degenerate triangles is $\Delta_j,\ldots,\Delta_{j+k-1}$. The number $k$ is the length of the chain. The maximality assumption means that the triangles $\Delta_{j-1}$ and $\Delta_{j+k}$, if they exist, are non-degenerate.

To conclude the proof of Proposition ~\ref{prop:differentiability} it remains to check that $\Cgot$ is continuously differentiable at every $[\phi]$ with at least one chain of degeneracy. For simplicity, we only cover the case where $a_{n-3}([\phi])>0$. The case $a_{n-3}([\phi])=0$ can be treated in similar manner.

Let $[\phi_0]\in\dtrelcharvar$ such that $a_{n-3}([\phi_0])>0$. Assume that $[\phi_0]$ has exactly $d\geq 1$ chains of degeneracy of types $(j_1,k_1)$, $\ldots$, $(j_d,k_d)$ with $j_1<\ldots<j_d$. This means that $a_{j_l+k_l}([\phi_0])>0$ for every $l=1,\ldots,d$ (the case $l=d$ follows from the assumption $a_{n-3}([\phi_0])>0$). This implies that the angle parameters $\gamma_i$ are analytic in a neighbourhood of $[\phi_0]$ for every index $i$ in the complement of
\[
\{j_1,\ldots,j_1+k_1\}\cup\ldots\cup \{j_{d},\ldots,j_d+k_d\}.
\]
More precautions must be taken to deal with the case where $j_1=0$, i.e.\ when $a_0([\phi_0])=0$. To prove that $\Cgot$ is continuously differentiable at $[\phi_0]$ we claim that it is sufficient to prove
\begin{lem}\label{lem:key-lemma-differentiability}
The following functions are continuously differentiable in a neighbourhood of $[\phi_0]$:
\begin{enumerate}
\item $[\phi]\mapsto\exp\big(i\cdot (\gamma_{j_1}([\phi])+\ldots+\gamma_{j_1+k_1}([\phi]))\big)$ if $j_1\neq 0$,
\item $[\phi]\mapsto\exp\big(i\cdot (\gamma_{j_l}([\phi])+\ldots+\gamma_{j_l+k_l}([\phi]))\big)$ for every $l=2,\ldots,d$,
\item $[\phi]\mapsto \sqrt{a_{i}([\phi])}\exp\big(i\cdot (-\gamma_{i+1}([\phi])-\ldots-\gamma_{j_l+k_l}([\phi]))\big)$ for every $i=j_l,\ldots,j_l+k_l-1$ and $l=1,\ldots,d$.
\end{enumerate}
\end{lem}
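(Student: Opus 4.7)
The plan is to reinterpret each of the three types of functions as a composition of manifestly smooth maps by identifying them with complex coordinates (respectively, unit complex numbers) attached to geometric objects that vary analytically with $[\phi]$ across the locus where some $a_i$ vanishes.

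First I would establish local analyticity of the vertices: by choosing a smooth local slice for the $G$-action one can pick representatives so that the fixed points $B_i(\phi)$ and $C_j(\phi)$ depend analytically on $[\phi]$ in a neighborhood of $[\phi_0]$. This follows from the implicit function theorem applied to $\phi(b_i)\cdot z = z$, since $\vartheta(\phi(b_i)) = \beta_i([\phi])$ remains bounded away from $0$ and $2\pi$ by \eqref{eq:range-beta_i}. Then for each chain of degeneracy of type $(j_l,k_l)$, I would fix the collapse point $P_l := B_{j_l}(\phi_0)$ and a conformal chart $\zeta_l \colon U_l \to \mathbb{C}$ with $\zeta_l(P_l) = 0$; for $[\phi]$ close to $[\phi_0]$ all of the $B_i(\phi)$ and $C_j(\phi)$ relevant to this chain lie in $U_l$ and their $\zeta_l$-coordinates are analytic in $[\phi]$.

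For items (1) and (2) I would observe that, just as in the proof of Lemma \ref{lem:continuity}, the sum $\gamma_{j_l}+\ldots+\gamma_{j_l+k_l}$ telescopes to the oriented angle at $P_l$ between the tangent rays $\overrightarrow{P_l C_{j_l+1}(\phi)}$ and $\overrightarrow{P_l C_{j_l+k_l+2}(\phi)}$, modulo constants of the form $\pi - \alpha_j/2$ contributed by the middle indices. Because the flanking triangles $\Delta_{j_l-1}$ and $\Delta_{j_l+k_l}$ are non-degenerate at $[\phi_0]$, both $C_{j_l+1}(\phi)$ and $C_{j_l+k_l+2}(\phi)$ remain distinct from $P_l$ near $[\phi_0]$, so the unit complex numbers pointing from $P_l$ towards each of them (read in the chart $\zeta_l$) are analytic in $[\phi]$ and so is their quotient. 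For item (3) I would use the hyperbolic law of sines in the collapsing triangle $\Delta_i$ to express the geodesic distance from $B_{j_l+k_l}(\phi)$ to $B_{i+1}(\phi)$ along the chain as $\sqrt{a_i([\phi])}\cdot h_i([\phi])$ with $h_i$ analytic and non-vanishing at $[\phi_0]$, while the phase $-(\gamma_{i+1}+\ldots+\gamma_{j_l+k_l})$ measures the angle of the corresponding displacement relative to an analytic reference direction at $B_{j_l+k_l}(\phi)$. Hence $\sqrt{a_i}\exp(-i(\gamma_{i+1}+\ldots+\gamma_{j_l+k_l}))$ equals, up to the analytic factor $h_i$, the complex $\zeta_l$-coordinate of $B_{i+1}(\phi)$ relative to $B_{j_l+k_l}(\phi)$, which is continuously differentiable because $B_{i+1}(\phi)$ is.

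The hard part will be item (3). The naive real parametrization $(a_i,\gamma_i)$ of a neighborhood of $[\phi_0]$ is only H\"older continuous, because $\sqrt{a_i}$ is not smooth at $a_i=0$ and the $\gamma_i$'s inside a chain of degeneracy jump across the degeneration. My strategy is to bundle amplitude and phase into a single complex number that plays the role of a Cartesian coordinate of the analytically moving vertex $B_{i+1}(\phi)$, thereby absorbing the square root (via the polar-to-Cartesian identification) and the angular indeterminacy (a point near the origin has a well-defined Cartesian coordinate even though its polar angle is not). The combinatorial packaging in the statement of Lemma \ref{lem:key-lemma-differentiability} is arranged precisely so that each problematic $\sqrt{a_i}$ and each cluster of discontinuous $\gamma$'s together form one such Cartesian coordinate or one such unit frame rotation.
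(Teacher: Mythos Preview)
Your intuition that each expression should be reinterpreted as a ``Cartesian'' complex coordinate of an analytically moving point is exactly right, and matches the strategy the paper uses. However, the specific geometric identifications you propose do not quite work.

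\textbf{Items (1) and (2).} Your claim that $\gamma_{j_l}+\ldots+\gamma_{j_l+k_l}$ telescopes to a single oriented angle at the collapse point $P_l$ is only true in the limit $[\phi]\to[\phi_0]$. For $[\phi]$ nearby but distinct, the summands $\gamma_m$ are measured at the \emph{different} basepoints $B_m(\phi)$, so the telescoping produces, in addition to two ``endpoint'' terms anchored at non-degenerate vertices, a residual sum of parallel-transport corrections of the type $\Gamma_{B_m}(B_{m+1})-\Gamma_{B_{m+1}}(B_m)$ (angle from $B_m$ towards $B_{m+1}$ minus angle from $B_{m+1}$ towards $B_m$, each measured against a fixed reference direction). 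These corrections are not zero, and one must show separately that their exponentials extend smoothly across $B_m=B_{m+1}$. The paper does this by an explicit computation (Lemma~\ref{lem:extension-exponential-Gamma-Gamma}); your conformal-chart argument needs the analogous statement, which you have not supplied.

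\textbf{Item (3).} Here there is a genuine gap. You identify $\sqrt{a_i}\exp\big(-i(\gamma_{i+1}+\ldots+\gamma_{j_l+k_l})\big)$ with the complex coordinate of $B_{i+1}(\phi)$ relative to the anchor $B_{j_l+k_l}(\phi)$. But the distance $d(B_{j_l+k_l},B_{i+1})$ is \emph{not} of the form $\sqrt{a_i}\cdot h_i$ with $h_i$ analytic and non-vanishing: it vanishes when all of $a_{i+1},\ldots,a_{j_l+k_l-1}$ vanish, independently of $a_i$, and can be positive when $a_i=0$. Concretely, $d(B_{j_l+k_l},B_{j_l+k_l-1})$ scales like $\sqrt{a_{j_l+k_l-1}}$, not like $\sqrt{a_{j_l+k_l-2}}$. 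The correct pairing, which the paper carries out, is local to the triangle $\Delta_i$: one splits off from the phase the single term $\Gamma_{B_{i+1}}(C_{i+2})$ (the direction from $B_{i+1}$ into $\Delta_i$), shows via the law of cosines that $\sqrt{a_i}$ is an analytic non-vanishing multiple of $d(B_i,C_{i+2})$ (Lemma~\ref{lem:sqrt-a_i-divided-by-distance-analytic}), and then verifies by explicit computation that the product $\sqrt{a_i}\,e^{i\Gamma_{B_{i+1}}(C_{i+2})}$ is smooth (Lemma~\ref{lem:Gamma-times-sqrt-area-differentiable}). The remaining phase factors are the same transport corrections as in items (1)--(2) together with a term anchored at the non-degenerate $B_{j_l+k_l}$, all of which extend smoothly.
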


We now explain how Proposition ~\ref{prop:differentiability} follows from Lemma ~\ref{lem:key-lemma-differentiability}.

\begin{proof}[Proof of Proposition ~\ref{prop:differentiability}]
We prove that $\Cgot$ is continuously differentiable at $[\phi_0]$. First assume $j_1\neq 0$. The first two statements of Lemma ~\ref{lem:key-lemma-differentiability}, together with the observation made just before stating Lemma ~\ref{lem:key-lemma-differentiability}, imply that $\exp\big(i\cdot\sigma_i([\phi])\big)$ is continuously differentiable in a neighbourhood of $[\phi_0]$ for every index $i$ in the complement of
\[
\{j_1,\ldots,j_1+k_1-1\}\cup\ldots\cup \{j_{d},\ldots,j_d+k_d-1\}.
\]
These are precisely the indices $i$ for which $a_i([\phi_0])>0$. Denote the collection of these indices $\mathcal I_{reg}$. If $j_1=0$, then we may only conclude that $\exp\big(i\cdot(\sigma_i([\phi])-\sigma_{j_1+k_1}([\phi]))\big)$  is continuously differentiable in a neighbourhood of $[\phi_0]$ for every index $i$ in $\mathcal I_{reg}$. So, in both cases we know that 
\begin{equation}\label{eq:564}
\exp\big(i\cdot(\sigma_i([\phi])-\sigma_{j_1+k_1}([\phi]))\big)
\end{equation}
is continuously differentiable in a neighbourhood of $[\phi_0]$ for every index $i$ in $\mathcal I_{reg}$.

Recall that if $a_i([\phi_0])>0$, then $\sqrt{a_i([\phi])}$ is differentiable in a neighbourhood of $[\phi_0]$. We decide to work in the chart $\{z_{j_1+k_1}\neq 0\}$ of $\CP^{n-3}$. So, proving that $\Cgot$ is continuously differentiable at $[\phi_0]$ amounts to prove that all the functions
\begin{equation}\label{eq:643}
\sqrt{a_i([\phi])}\exp\big(i\cdot (\sigma_i([\phi])-\sigma_{j_1+k_1}([\phi]))\big)
\end{equation}
are continuously differentiable in a neighbourhood of $[\phi_0]$ for every $i\neq j_1+k_1$. This is immediate for $i\in \mathcal I_{reg}$. For all the indices $i$ such that $a_i([\phi_0])=0$, we proceed as follows. Recall from \eqref{eq:564} that the functions $\exp\big(i\cdot(\sigma_i([\phi])-\sigma_{j_1+k_1}([\phi]))\big)$ are continuously differentiable for $i=j_l+k_l$ with $l=2,\ldots,d$. So, proving that the functions of the type \eqref{eq:643} are continuously differentiable for $i\notin \mathcal I_{reg}$ is equivalent to proving that all the functions 
\[
\sqrt{a_i([\phi])}\exp\big(i\cdot (\sigma_i([\phi])-\sigma_{j_l+k_l}([\phi]))\big)
\]
are differentiable in a neighbourhood of $[\phi_0]$ for all $i=j_l,\ldots,j_l+k_l-1$ and all $l=1,\ldots,d$. This is exactly the third statement of Lemma ~\ref{lem:key-lemma-differentiability}.
\end{proof}

The rest of this section is devoted to proving Lemma ~\ref{lem:key-lemma-differentiability}. The idea is to express the area and angle parameters as functions of the coordinates of the points $C_i=x_{C_i}+i\cdot y_{C_i}$ and $B_i=x_{B_i}+i\cdot y_{B_i}$. We start with the area parameters. 

\begin{lem}\label{lem:formula-area-parameter}
Let $[\phi]\in \dtrelcharvar$. For any $i=0,\ldots,n-3$, we have
\[
a_i([\phi])=4\arcsin\left(\frac{\sin\left(\frac{\alpha_{i+2}}{2}\right)\sin\left(\frac{\beta_i}{2}\right)}{4\sin\left(\frac{\alpha_{i+2}+2\pi-\beta_{i+1}-\beta_{i}}{4}\right)}\cdot y_{C_{i+2}}^{-1}y_{B_i}^{-1}\left((x_{C_{i+2}}-x_{B_i})^2+(y_{C_{i+2}}-y_{B_i})^2\right)\right),
\]
where we abbreviated $\beta_i=\beta_i([\phi])$, $\beta_{i+1}=\beta_{i+1}([\phi])$, $C_{i+2}=C_{i+2}(\phi)$ and $B_i=B_i(\phi)$.
\end{lem}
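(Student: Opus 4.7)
The strategy is to apply the hyperbolic law of cosines for angles to $\Delta_i$, and then convert the resulting hyperbolic distance into Euclidean coordinates via the upper half-plane metric.

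By Lemma \ref{lem:interior-angles-chain-of-triangles}, the non-degenerate triangle $\Delta_i = \Delta(B_i, C_{i+2}, B_{i+1})$ has interior angles $\beta_i/2$, $\pi - \alpha_{i+2}/2$, $\pi - \beta_{i+1}/2$ at its three vertices (in that order), so Gauss--Bonnet gives $[\Delta_i] = a_i/2$, consistent with \eqref{eq:area-parameters-in-terms-of-angles}. Setting $d := d_\Hyp(B_i, C_{i+2})$, the side $[B_i, C_{i+2}]$ is opposite to the vertex $B_{i+1}$, so the dual hyperbolic law of cosines applied at $B_{i+1}$ reads
\[
\cos(\pi-\beta_{i+1}/2) = -\cos(\beta_i/2)\cos(\pi-\alpha_{i+2}/2) + \sin(\beta_i/2)\sin(\pi-\alpha_{i+2}/2)\cosh d.
\]
Solving for $\cosh d - 1 = 2\sinh^2(d/2)$, the numerator simplifies to $-\cos(\beta_{i+1}/2) - \cos((\beta_i-\alpha_{i+2})/2)$, which the sum-to-product identity $\cos X + \cos Y = 2\cos(\tfrac{X+Y}{2})\cos(\tfrac{X-Y}{2})$ factors as a product of two cosines. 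The phase shift $\cos(a_i/4 + \pi/2) = -\sin(a_i/4)$ (which uses $a_i/4 = (\alpha_{i+2}+\beta_{i+1}-\beta_i)/4 - \pi/2$ from \eqref{eq:area-parameters-in-terms-of-angles}), together with an analogous rewrite of the second factor, converts both cosines into the sines that appear in the lemma. This yields a compact identity relating $\sinh^2(d/2)$, $\sin(a_i/4)$, and the three sine factors $\sin(\beta_i/2)$, $\sin(\alpha_{i+2}/2)$, $\sin((\alpha_{i+2}+2\pi-\beta_{i+1}-\beta_i)/4)$.

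To finish, I would substitute the standard upper half-plane formula $\sinh^2(d(z,w)/2) = |z-w|^2/(4\,\mathrm{Im}(z)\mathrm{Im}(w))$, solve for $\sin(a_i/4)$, and apply $\arcsin$. The degenerate case $a_i = 0$ is handled separately: Lemma \ref{lem:description-chain-triangles-alpha}(2) forces $B_i = C_{i+2}$, so both sides of the asserted identity vanish, and the formula extends by continuity from the non-degenerate locus.

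The main technical obstacle is the sign bookkeeping during the trigonometric manipulations: one must verify that $\sin(a_i/4)$, the denominator $\sin((\alpha_{i+2}+2\pi-\beta_{i+1}-\beta_i)/4)$, and the factors $\sin(\alpha_{i+2}/2)$, $\sin(\beta_i/2)$ are all positive on the admissible range, so that $\arcsin$ is well-defined in its principal branch. These positivity statements follow from $a_i \in [0,\lambda] \subset [0,2\pi)$ (established after \eqref{eq:relation-area-parameters-moment-map}) together with the range estimates \eqref{eq:range-beta_i} for $\beta_i$ and $\beta_{i+1}$, which trap the relevant quarter-sum arguments inside $(0,\pi)$.
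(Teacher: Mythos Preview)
Your proposal is correct and follows essentially the same route as the paper: apply the second hyperbolic law of cosines at the vertex $B_{i+1}$, rewrite $\cosh d - 1$ via a sum-to-product identity to extract the factor $\sin(a_i/4)$, and substitute the upper half-plane distance formula. The paper uses the $\cos X - \cos Y$ identity directly on $\cos\big(\tfrac{2\pi-\beta_{i+1}}{2}\big)-\cos\big(\tfrac{\alpha_{i+2}-\beta_i}{2}\big)$ rather than your $\cos X + \cos Y$ followed by phase shifts, but this is the same computation; your added paragraph on positivity of the sine factors and the well-definedness of $\arcsin$ is a point the paper leaves implicit.
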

\begin{proof}
The formula is true if the triangle $\Delta_i$ is degenerate because then $B_i=C_{i+2}$. Recall that the hyperbolic distance $d(C_{i+2},B_i)$ in the upper half-plane is given by
\begin{equation}\label{eq:distance-upper-half-plane}
\cosh (d(C_{i+2},B_i))=1+\frac{(x_{C_{i+2}}-x_{B_i})^2+(y_{C_{i+2}}-y_{B_i})^2}{2y_{C_{i+2}}y_{B_i}}.
\end{equation}
The hyperbolic law of cosines applied to the triangle $\Delta_i=\Delta(B_i,C_{i+2},B_{i+1})$ gives
\[
\cos\left(\pi-\frac{\beta_{i+1}}{2}\right)=-\cos\left(\pi-\frac{\alpha_{i+2}}{2}\right)\cos\left(\frac{\beta_i}{2}\right)+\sin\left(\pi-\frac{\alpha_{i+2}}{2}\right)\sin\left(\frac{\beta_i}{2}\right)\cosh(d(C_{i+2},B_i)).
\]
For geometric reasons, it makes sense to keep using $2\pi-\beta_{i+1}$ and not simplify the corresponding trigonometric terms. Using the angle sum identity for the cosine, this can be rewritten as
\begin{align}
\cos\left(\pi-\frac{\beta_{i+1}}{2}\right)&=\cos\left(\frac{\alpha_{i+2}}{2}\right)\cos\left(\frac{\beta_i}{2}\right)+\sin\left(\frac{\alpha_{i+2}}{2}\right)\sin\left(\frac{\beta_i}{2}\right)\cosh(d(C_{i+2},B_i))\nonumber \\ 
&=\cos\left(\frac{\alpha_{i+2}-\beta_i}{2}\right)+\sin\left(\frac{\alpha_{i+2}}{2}\right)\sin\left(\frac{\beta_i}{2}\right)(\cosh(d(C_{i+2},B_i))-1). \label{eq:random-equation-avant-cos-cos}
\end{align}
We use the trigonometric identity $\cos(x)+\cos(y)=-2\sin((x-y)/2)\sin((x+y)/2)$  to write
\[
\cos\left(\frac{2\pi-\beta_{i+1}}{2}\right)-\cos\left(\frac{\alpha_{i+2}-\beta_i}{2}\right)=-2\sin\left(\frac{2\pi-\beta_{i+1}-\alpha_{i+2}+\beta_i}{4}\right)\sin\left(\frac{2\pi-\beta_{i+1}+\alpha_{i+2}-\beta_i}{4}\right).
\]
Using \eqref{eq:area-parameters-in-terms-of-angles} we obtain
\begin{equation}\label{eq:random-equation-cos-cos}
\cos\left(\frac{2\pi-\beta_{i+1}}{2}\right)-\cos\left(\frac{\alpha_{i+2}-\beta_i}{2}\right)=2\sin\left(\frac{a_i([\phi])}{4}\right)\sin\left(\frac{\alpha_{i+2}+2\pi-\beta_i-\beta_{i+1}}{4}\right).
\end{equation}
The conclusion follows from \eqref{eq:distance-upper-half-plane}, \eqref{eq:random-equation-avant-cos-cos} and \eqref{eq:random-equation-cos-cos}.
\end{proof}

The formula of Lemma ~\ref{lem:formula-area-parameter} for the area parameters is relevant for the following reasons. Recall that the ranges of the functions $\beta_i$ over $\dtrelcharvar$ are compact subsets of $(0,2\pi)$ explicitly written down in \eqref{eq:range-beta_i}. So, the range of the ratio 
\[
\frac{\sin\left(\frac{\alpha_{i+2}}{2}\right)\sin\left(\frac{\beta_i}{2}\right)}{4\sin\left(\frac{\alpha_{i+2}+2\pi-\beta_i-\beta_{i+1}}{4}\right)}
\]
over $\dtrelcharvar$ is a compact interval inside the positive real numbers. On the other hand, the expression $$y_{C_{i+2}}^{-1}y_{B_i}^{-1}\left((x_{C_{i+2}}-x_{B_i})^2+(y_{C_{i+2}}-y_{B_i})^2\right)$$ is zero whenever the triangle $\Delta_i$ is degenerate. This means that the function 
\[
\frac{a_i}{y_{C_{i+2}}^{-1}y_{B_i}^{-1}\left((x_{C_{i+2}}-x_{B_i})^2+(y_{C_{i+2}}-y_{B_i})^2\right)}
\]
extends analytically to any $[\phi_0]$ such that $a_i([\phi_0])=0$. Moreover, its value at $[\phi_0]$ is the positive number
\[
\frac{\sin\left(\frac{\alpha_{i+2}}{2}\right)\sin\left(\frac{\beta_i([\phi_0])}{2}\right)}{\sin\left(\frac{\beta_{i+1}([\phi_0])}{2}\right)},
\]
which remains uniformly bounded away from zero by the above remark for every such $[\phi_0]$. We conclude that the function 
\begin{equation}\label{eq:square-root-area-parameter-analytic}
\sqrt{\frac{a_i}{y_{C_{i+2}}^{-1}y_{B_i}^{-1}\left((x_{C_{i+2}}-x_{B_i})^2+(y_{C_{i+2}}-y_{B_i})^2\right)}}
\end{equation}
also extends analytically to any $[\phi_0]$ such that $a_i([\phi_0])=0$. We proved
\begin{lem}\label{lem:sqrt-a_i-divided-by-distance-analytic}
The function defined by \eqref{eq:square-root-area-parameter-analytic} on the subspace of $\dtrelcharvar$ of all $[\phi]$ for which $a_i([\phi])>0$ extends analytically to $\dtrelcharvar$.
\end{lem}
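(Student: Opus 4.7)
The plan is to reverse-engineer Lemma \ref{lem:formula-area-parameter}. Write
\[
D([\phi]) := y_{C_{i+2}}^{-1}y_{B_i}^{-1}\big((x_{C_{i+2}}-x_{B_i})^2+(y_{C_{i+2}}-y_{B_i})^2\big) = \cosh(d(C_{i+2},B_i))-1,
\]
which is an isometry-invariant, real-analytic, nonnegative function on $\dtrelcharvar$ that vanishes exactly when $a_i=0$, and set
\[
R([\phi]) := \frac{\sin(\alpha_{i+2}/2)\sin(\beta_i([\phi])/2)}{2\sin\big((\alpha_{i+2}+2\pi-\beta_i([\phi])-\beta_{i+1}([\phi]))/4\big)}.
\]
Then Lemma \ref{lem:formula-area-parameter} reads $a_i=4\arcsin(R\cdot D)$. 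The first step is the algebraic identity
\[
\frac{a_i}{D} \;=\; 4R\cdot\frac{\arcsin(RD)}{RD}
\]
on the open locus $\{D\neq 0\}$, which isolates the only potential singularity into the factor $\arcsin(t)/t$ at $t=0$.

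The second step is to check that each piece of the right-hand side extends real-analytically across $\{D=0\}$. Using the explicit bounds \eqref{eq:range-beta_i}, the functions $\beta_i,\beta_{i+1}$ remain in compact subintervals of $(0,2\pi)$, which keeps the denominator of $R$ bounded away from zero; hence $R$ is real-analytic on all of $\dtrelcharvar$ and takes values in a compact interval of strictly positive reals. Moreover $RD=\sin(a_i/4)$ stays in $[0,1)$ because $a_i\leq\lambda<2\pi$, so $RD$ lies inside the interval $(-1,1)$ where $t\mapsto \arcsin(t)/t$ (with value $1$ at $t=0$) is real-analytic. Composing yields a real-analytic extension of $a_i/D$ to $\dtrelcharvar$ whose value at a degenerate point $[\phi_0]$ is the strictly positive number $4R([\phi_0])$; unwinding $R$ by means of the relation $\beta_{i+1}=2\pi-\alpha_{i+2}+\beta_i$, valid at $[\phi_0]$ by \eqref{eq:area-parameters-in-terms-of-angles}, recovers the value $2\sin(\alpha_{i+2}/2)\sin(\beta_i([\phi_0])/2)/\sin(\beta_{i+1}([\phi_0])/2)$ announced just before the lemma.

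The last step is to take the positive square root. Since the analytic extension of $a_i/D$ is uniformly bounded and strictly positive on the compact space $\dtrelcharvar$, its positive square root is real-analytic as well, giving the desired extension of \eqref{eq:square-root-area-parameter-analytic}. I do not anticipate a real obstacle here: the only delicate point is ensuring that the denominator of $R$ is uniformly bounded away from zero, and this is controlled by the compactness of the ranges \eqref{eq:range-beta_i}, which in turn rests on the angles condition $\lambda>0$.
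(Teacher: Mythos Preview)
Your proof is correct and follows essentially the same approach as the paper: both use Lemma~\ref{lem:formula-area-parameter} to factor $a_i$ through $R\cdot D$, invoke the compact ranges \eqref{eq:range-beta_i} to see that $R$ is a strictly positive analytic function, and conclude that $a_i/D$ extends to a strictly positive analytic function whose square root is therefore analytic. Your write-up is in fact more explicit than the paper's, since you isolate the removable singularity via the identity $a_i/D = 4R\cdot\arcsin(RD)/(RD)$ and the analyticity of $t\mapsto\arcsin(t)/t$ at $t=0$, a step the paper leaves implicit.
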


We now proceed with a computation of the angle parameters. We start by introducing the function $\Gamma\colon \Hyp\setminus\{i\}\to \R/2\pi\Z$ defined as
\[
\Gamma(x+iy)\coloneqq\left\{\begin{array}{ll}
0,& x=0\text{ and } y>1,\\
\pi, & x=0\text{ and } y<1,\\
3\pi/2, & x^2+y^2=1\text{ and } x>0,\\
\pi/2, & x^2+y^2=1\text{ and } x<0,\\
\pi-\arctan\left(\frac{2x}{x^2+y^2-1}\right), & x^2+y^2<1,\\
-\arctan\left(\frac{2x}{x^2+y^2-1}\right),& x^2+y^2>1.
\end{array}
\right.
\]
The different domains involved in the definition of $\Gamma$ are illustrated on Figure ~\ref{fig:domain-Gamma}. 
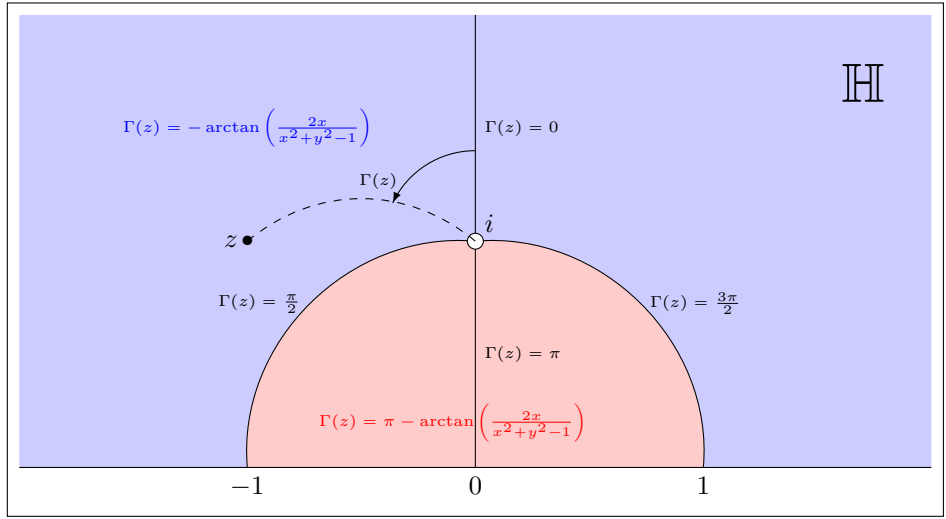
\begin{figure}[h]
\begin{center}
\begin{tikzpicture}[framed, scale=3,font=\sffamily,decoration={
    markings,
    mark=at position 1 with {\arrow{>}}}]]
    
\fill[blue!20] (0,1) to[bend left=50] (1,0)
		 to (2,0)
		 to (2,2) 
		 to  (0,2)
		 to  cycle;

\fill[blue!20] (-1,0) to[bend left=50] (0,1)
		 to (0,2)
		 to (-2,2) 
		 to  (-2,0)
		 to  cycle;

\fill[red!20] (-1,0) to[bend left=50] (0,1)
		 to (0,0)
		 to  cycle;
		 
\fill[red!20] (0,1) to[bend left=50] (1,0)
		 to (0,0)
		 to  cycle;
		 
\draw (0,1) to[bend left=50] node[right]{\tiny $\Gamma(z)=\frac{3\pi}{2}$} (1,0);
\draw (-1,0) to[bend left=50] node[left]{\tiny $\Gamma(z)=\frac{\pi}{2}$} (0,1);

\draw (0,0) to node[right]{\tiny $\Gamma(z)=\pi$} (0,1);
\draw (0,1) to node[right]{\tiny $\Gamma(z)=0$} (0,2);

\draw (-1,1.5) node[blue]{\tiny $\Gamma(z)=-\arctan\left(\frac{2x}{x^2+y^2-1}\right)$};

\draw (-.1,.2) node[red]{\tiny $\Gamma(z)=\pi-\arctan\left(\frac{2x}{x^2+y^2-1}\right)$};

\draw[fill=white] (0,1) circle (1pt);

\draw (-2,0) to (2,0);

\draw (0,1) node[above right]{$i$};
\draw (1,0) node[below]{$1$};
\draw (-1,0) node[below]{$-1$};
\draw (0,0) node[below]{$0$};
\draw (1.7,1.7) node{\Huge $\Hyp$};

\draw(-1,1) node{$\bullet$} node[left]{$z$};
\draw[dashed] (0,1) to[bend right=40] (-1,1);
\draw[postaction={decorate}] (0,1.4) arc (90:155:.4) node[near end, left]{\tiny $\Gamma(z)$};

\end{tikzpicture}
\end{center}
\caption{Illustration of the different domains involved in the definition of the function $\Gamma$ and the value of $\Gamma$ in each of these regions.}\label{fig:domain-Gamma}
\end{figure}

The function $\Gamma$ has a geometric interpretation. It measures the oriented angle between the vertical geodesic ray leaving from $i$ and the geodesic ray leaving from $i$ and going through $x+iy$. This can easily be seen after noticing that the ratio
\[
\frac{x^2+y^2-1}{2x}
\]
is the point on the boundary of the upper half-plane which is the center of the semi-circle supporting the geodesic through $i$ and $x+iy$.

\begin{lem}\label{lem:Gamma-C1}
The function $\Gamma\colon \Hyp\setminus\{i\}\to \R/2\pi\Z$ is continuously differentiable.
\end{lem}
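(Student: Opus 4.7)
My plan is to recognize $\Gamma$ globally as the argument of a single holomorphic map and deduce smoothness from the fact that $\arg\colon\C^*\to\R/2\pi\Z$ is real-analytic. Consider the Cayley transform
\[
f(z)=\frac{z-i}{z+i},
\]
which is a biholomorphism from $\Hyp$ onto the unit disk with $f(i)=0$ and which is an isometry for the respective hyperbolic metrics. In the Poincar\'e disk model the geodesics through the origin are Euclidean radii, and $f$ is conformal, so the oriented angle at $i$ between the upward vertical geodesic ray and the ray $\overrightarrow{iz}$ --- which is the geometric meaning of $\Gamma(z)$ highlighted in the paper --- equals the Euclidean argument of $f(z)$ modulo $2\pi$. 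A sanity check on the reference direction: for $y>1$ one computes $f(iy)=(y-1)/(y+1)\in(0,1)$, so the upward vertical ray indeed maps to the positive real axis.

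To match $\arg\circ f$ with the explicit piecewise definition, I would rationalize and obtain
\[
f(x+iy)=\frac{x^2+y^2-1-2ix}{x^2+(y+1)^2}.
\]
On $\{x^2+y^2>1\}$ the real part is positive, so $\arg f(z)=-\arctan(2x/(x^2+y^2-1))$; on $\{x^2+y^2<1\}$ the real part is negative, the argument lies in $(\pi/2,3\pi/2)$, and it equals $\pi-\arctan(2x/(x^2+y^2-1))$. The remaining three cases (the two half-lines $x=0$ and the half-circle $x^2+y^2=1$) are verified by direct substitution, and the test value $f(2i)=1/3$ pins down the absence of a global $2\pi$ shift. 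This establishes $\Gamma=\arg\circ f$ on all of $\Hyp\setminus\{i\}$.

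Since $f$ restricts to a holomorphic, hence smooth, map $\Hyp\setminus\{i\}\to\C^*$ and the argument $\arg\colon\C^*\to\R/2\pi\Z$ is real-analytic (it is the imaginary part of the holomorphic logarithm $\log\colon\C^*\to\C/2\pi i\Z$), the composite $\Gamma$ is real-analytic, and in particular continuously differentiable. The only mildly subtle point is the global compatibility of the six piecewise branches with $\arg\circ f$, which is why I would test-evaluate at a single normalization point; once that is done, smoothness is automatic and bypasses any need to compute limits of the $\arctan$ formulas at the gluing loci.
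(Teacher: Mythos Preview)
Your proof is correct and takes a genuinely different route from the paper. The paper argues directly from the piecewise formula: it checks continuity of $\Gamma$ across the six regions, then computes the partial derivatives of $-\arctan\!\big(2x/(x^2+y^2-1)\big)$ in the open regions and observes that these extend continuously to all of $\Hyp\setminus\{i\}$, concluding $C^1$-regularity. Your approach instead identifies $\Gamma$ globally as $\arg\circ f$ with $f(z)=(z-i)/(z+i)$ the Cayley transform, and deduces real-analyticity from the smoothness of $\arg\colon\C^*\to\R/2\pi\Z$. This is cleaner: it yields a stronger conclusion (real-analytic rather than merely $C^1$), avoids any casework at the gluing loci, and in fact anticipates the paper's subsequent Lemma~\ref{lem:exponential-of-function-Gamma}, since your formula $f(x+iy)=\big(x^2+y^2-1-2ix\big)/\big(x^2+(y+1)^2\big)$ immediately gives $e^{i\Gamma}=f/|f|$. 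The paper's computation, by contrast, is more hands-on and does not rely on the geometric interpretation of $\Gamma$; it would go through even if one only had the piecewise definition.
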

\begin{proof}
We refer to Figure ~\ref{fig:domain-Gamma}. The function $\Gamma$ is continuously differentiable in the blue and red regions. These regions are open subdomains of $\Hyp$. If one carefully studies the limit behaviour of $\Gamma$ at the boundary of the blue and red regions, one sees that $\Gamma$ is a continuous function. The partial derivatives inside the blue and red regions are
\[
\frac{\partial}{\partial x}\left(-\arctan\left(\frac{2x}{x^2+y^2-1}\right)\right)=\frac{2(x^2-y^2+1)}{4x^2+(x^2+y^2-1)^2}
\]
and
\[
\frac{\partial}{\partial y}\left(-\arctan\left(\frac{2x}{x^2+y^2-1}\right)\right)=\frac{4xy}{4x^2+(x^2+y^2-1)^2}.
\]
These partial derivatives extend continuously to $\Hyp\setminus\{i\}$. We conclude that $\Gamma$ is continuously differentiable.
\end{proof}

\begin{lem}\label{lem:exponential-of-function-Gamma}
It holds
\[
\exp(i\cdot\Gamma(x+iy))=\frac{x^2+y^2-1-i\cdot 2x}{\sqrt{4x^2+(x^2+y^2-1)^2}}.
\]
\end{lem}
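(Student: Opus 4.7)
The plan is a direct case-by-case verification, using the piecewise definition of $\Gamma$. Set $u := x^2+y^2-1$ and $v := 2x$, so the claimed identity reads
\[
\exp(i\cdot\Gamma(x+iy)) \;=\; \frac{u - i\,v}{\sqrt{u^2+v^2}},
\]
and since $|u-iv|=\sqrt{u^2+v^2}$ the right-hand side has modulus one. It therefore suffices to check separately that $\cos\Gamma = u/\sqrt{u^2+v^2}$ and $\sin\Gamma = -v/\sqrt{u^2+v^2}$ in each of the six regions appearing in the definition of $\Gamma$.

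For the four ``boundary'' cases ($x=0$ with $y>1$ or $y<1$, and $x^2+y^2=1$ with $x>0$ or $x<0$) the verification is immediate: one has either $v=0$ and $u = \pm|u|$, giving $(u-iv)/\sqrt{u^2+v^2}=\pm 1 = e^{i\cdot 0}$ or $e^{i\pi}$, or $u=0$ and $v = \pm|v|$, giving $(u-iv)/\sqrt{u^2+v^2}=\mp i = e^{i\cdot 3\pi/2}$ or $e^{i\pi/2}$, matching the prescribed value of $\Gamma$ in each case.

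For the two open regions, the key identities are $\cos(\arctan t) = 1/\sqrt{1+t^2}$ and $\sin(\arctan t) = t/\sqrt{1+t^2}$, combined with a careful bookkeeping of the sign of $u$. When $x^2+y^2 > 1$, so $u>0$, the angle $\Gamma = -\arctan(v/u)$ satisfies
\[
\cos\Gamma = \frac{1}{\sqrt{1+v^2/u^2}} = \frac{|u|}{\sqrt{u^2+v^2}} = \frac{u}{\sqrt{u^2+v^2}}, \qquad \sin\Gamma = -\frac{v/u}{\sqrt{1+v^2/u^2}} = -\frac{v}{\sqrt{u^2+v^2}},
\]
as required. When $x^2+y^2 < 1$, so $u<0$, the angle $\Gamma = \pi - \arctan(v/u)$ satisfies $\cos\Gamma = -\cos(\arctan(v/u)) = -|u|/\sqrt{u^2+v^2} = u/\sqrt{u^2+v^2}$, using $|u|=-u$; and analogously $\sin\Gamma = \sin(\arctan(v/u)) = (v/u)\cdot|u|/\sqrt{u^2+v^2} = -v/\sqrt{u^2+v^2}$.

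I do not expect any serious obstacle: once $u$ and $v$ are introduced the identity becomes essentially tautological, and the only subtlety is correctly tracking the sign of $u$ when passing between $|u|/\sqrt{u^2+v^2}$ and $u/\sqrt{u^2+v^2}$ inside the two regions where $\arctan$ is used. The boundary cases are consistent with the interior cases in the sense that the formulas on the circle $x^2+y^2=1$ and on the imaginary axis agree with the limits from either side, which moreover matches the continuous differentiability of $\Gamma$ established in Lemma \ref{lem:Gamma-C1}.
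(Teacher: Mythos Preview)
Your proof is correct and follows essentially the same route as the paper: both reduce to the identity $e^{i\arctan t}=(1+it)/\sqrt{1+t^2}$ (you split it into its cosine and sine components) and then handle the sign of $u=x^2+y^2-1$ in the two open regions. Your version is in fact slightly more complete, since you also verify the four boundary cases explicitly, whereas the paper treats only the two open regions and leaves the boundary cases implicit.
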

\begin{proof}
By the definition of $\Gamma$, it follows that
\[
\exp(i\cdot\Gamma(x+iy))=\left\{\begin{array}{ll}
-\exp\left(i\arctan\left(\frac{-2x}{x^2+y^2-1}\right)\right), & x^2+y^2<1,\\
\exp\left(i\arctan\left(\frac{-2x}{x^2+y^2-1}\right)\right),& x^2+y^2>1.
\end{array}
\right.
\]
We use the identity
\[
e^{i\cdot\arctan(x)}=\frac{1+ix}{\sqrt{x^2+1}}.
\]
Observe that
\[
\sqrt{\frac{4x^2}{(x^2+y^2-1)^2}+1}=\frac{\sqrt{4x^2+(x^2+y^2-1)^2}}{\vert x^2+y^2-1\vert}.
\] 
Hence
\begin{align*}
\exp(i\cdot\Gamma(x+iy))&=\frac{x^2+y^2-1}{\sqrt{4x^2+(x^2+y^2-1)^2}}+i\cdot \frac{-2x}{x^2+y^2-1}\cdot\frac{x^2+y^2-1}{\sqrt{4x^2+(x^2+y^2-1)^2}}\\
&= \frac{x^2+y^2-1-i\cdot 2x}{\sqrt{4x^2+(x^2+y^2-1)^2}}.
\end{align*}
\end{proof}

Let $p=x_p+iy_p$ be a point in $\Hyp$. We introduce the function $\Gamma_p\colon \Hyp\setminus\{p\}\to \R/2\pi\Z$ defined by
\[
\Gamma_p(z)\coloneqq\Gamma(y_p^{-1}(z-x_p)).
\]
Note that the function $\Gamma_p$ is defined to be the composition of the function $\Gamma$ with the orientation-preserving isometry
\[
\pm y_p^{1/2} \begin{pmatrix}
y_p^{-1} & -x_py_p^{-1}\\
0 & 1
\end{pmatrix}
\]
of the upper half-plane that sends $p$ to $i$. This isometry sends vertical geodesics to vertical geodesics. In other words, $\Gamma_p$ measures the oriented angle between the vertical geodesic ray leaving from $p$ and the geodesic ray leaving from $p$ and going through $z$. The analogue of Lemma ~\ref{lem:exponential-of-function-Gamma} for the function $\Gamma_p$ reads
\begin{equation}\label{eq:exponential-of-function-Gamma_p}
\exp(i\cdot\Gamma_p(x+iy))=\frac{(x-x_p)^2+y^2-y_p^2-i\cdot 2y_p(x-x_p)}{\sqrt{4y_p^2(x-x_p)^2+((x-x_p)^2+y^2-y_p^2)^2}}.
\end{equation}

\begin{lem}\label{lem:extension-exponential-Gamma-Gamma}
The function that maps a pair of distinct points $(p,z)$ in $\Hyp\times \Hyp$ to
\[
\exp \big(i\cdot(\Gamma_z(p)-\Gamma_p(z))\big)
\]
extends to a continuously differentiable function of $\Hyp\times \Hyp$.
\end{lem}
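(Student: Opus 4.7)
The plan is to show that the apparent singularity of $\exp(i\Gamma_p(z))$ and $\exp(i\Gamma_z(p))$ along the diagonal $\{p=z\}$ is illusory: the dangerous factors cancel when one takes the ratio, leaving a manifestly smooth function of $(p,z)\in\Hyp\times\Hyp$. The geometric expectation is that the limit along the diagonal is $-1$, since the tangent vectors at the two endpoints of a vanishingly short geodesic point in opposite directions, contributing an angle of $\pi$ to $\Gamma_z(p)-\Gamma_p(z)$.

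To make this precise I would set $w:=z-p$, write $p=x_p+iy_p$, $z=x+iy$, and use formula \eqref{eq:exponential-of-function-Gamma_p} to establish the factorisation
\begin{equation*}
\exp(i\Gamma_p(z))=\frac{w}{|w|}\cdot\frac{\bar w-2iy_p}{|\bar w-2iy_p|}.
\end{equation*}
This reduces to two short algebraic identities: the numerator of the formula equals $|w|^2-2iy_p w=w(\bar w-2iy_p)$, and the expression under the square root in the denominator equals $|w|^2\cdot|\bar w-2iy_p|^2$ (which follows from $\bar w-w=-2i(y-y_p)$). The second factor is real-analytic and nowhere vanishing on $\Hyp\times\Hyp$, because $\bar w-2iy_p$ has imaginary part $-(y+y_p)<0$, so the only source of discontinuity is $w/|w|$. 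Swapping the roles of $p$ and $z$, and noting that the sign flips coming from replacing $w$ by $-w$ cancel in pairs, yields the symmetric identity
\begin{equation*}
\exp(i\Gamma_z(p))=\frac{w}{|w|}\cdot\frac{\bar w+2iy}{|\bar w+2iy|}.
\end{equation*}

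The final step is to divide these two identities: the singular factor $w/|w|$ drops out. Setting $\xi:=z-\bar p$, one checks at once that $\bar w+2iy=(x-x_p)+i(y+y_p)=\xi$ and $\bar w-2iy_p=\bar\xi$, whence
\begin{equation*}
\exp\!\big(i(\Gamma_z(p)-\Gamma_p(z))\big)=\frac{\xi}{\bar\xi}=\frac{z-\bar p}{\bar z-p}.
\end{equation*}
Since $\mathrm{Im}(z-\bar p)=y+y_p>0$ everywhere on $\Hyp\times\Hyp$, the quantity $\xi$ never vanishes, and the right-hand side is a real-analytic function on $\Hyp\times\Hyp$. This yields the desired extension, which is in fact of class $C^\omega$ (hence a fortiori $C^1$) and whose value on the diagonal is $2iy/(-2iy)=-1$, matching the geometric prediction. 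The only non-routine point is spotting the factorisation $|w|^2-2iy_p w=w(\bar w-2iy_p)$ and the companion identity for the denominator; once these are in hand the argument is purely algebraic, with no genuine analytic obstacle to overcome.
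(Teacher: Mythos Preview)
Your proof is correct and follows the same overall strategy as the paper: compute the ratio $\exp(i\Gamma_z(p))/\exp(i\Gamma_p(z))$ explicitly from formula \eqref{eq:exponential-of-function-Gamma_p} and observe that the result is manifestly smooth on all of $\Hyp\times\Hyp$. Your final expression $\xi/\bar\xi=(z-\bar p)/(\bar z-p)$ agrees with the paper's $\frac{(x_p-x_z)-i(y_p+y_z)}{(x_p-x_z)+i(y_p+y_z)}$.

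The execution differs in a way worth noting. The paper simply feeds the two instances of \eqref{eq:exponential-of-function-Gamma_p} to a computer-algebra system and reports the simplified quotient. You instead work in complex notation and isolate the factorisation $\exp(i\Gamma_p(z))=\dfrac{w}{|w|}\cdot\dfrac{\bar w-2iy_p}{|\bar w-2iy_p|}$ with $w=z-p$, which makes the mechanism of cancellation transparent: the singular factor $w/|w|$ appears identically in both $\exp(i\Gamma_p(z))$ and $\exp(i\Gamma_z(p))$ and drops out of the ratio, while the remaining factors are unimodular quotients of the nowhere-vanishing quantity $\xi=z-\bar p$. This is more illuminating than the Mathematica computation and also explains \emph{a priori} why the diagonal value is $-1$, matching your geometric heuristic. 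The only cost is the two algebraic identities you flag (numerator $=w(\bar w-2iy_p)$ and modulus-squared $=|w|^2|\bar w-2iy_p|^2$), both of which are one-line checks.
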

\begin{proof}
Let $p=x_p+iy_p$ and $z=x_z+iy_z$. We use \eqref{eq:exponential-of-function-Gamma_p} to compute, with the help of Wolfram Mathematica\footnote{version 12.2.0.0},
\begin{align*}
\frac{\exp \big(i\cdot\Gamma_z(p)\big)}{\exp \big(i\cdot\Gamma_p(z)\big)}&=\frac{(x_p-x_z)^2+y_p^2-y_z^2-i\cdot 2y_z(x_p-x_z)}{\sqrt{4y_z^2(x_p-x_z)^2+((x_p-x_z)^2+y_p^2-y_z^2)^2}}\cdot \frac{\sqrt{4y_p^2(x_z-x_p)^2+((x_z-x_p)^2+y_z^2-y_p^2)^2}}{(x_z-x_p)^2+y_z^2-y_p^2-i\cdot 2y_p(x_z-x_p)}\\
&=\frac{(x_p-x_z)-i(y_p+y_z)}{(x_p-x_z)+i(y_p+y_z)}.
\end{align*}
The last expression is a continuously differentiable function of $\Hyp\times \Hyp$.
\end{proof}

The relation between the function $\Gamma$ and the angle parameters is immediate. Let $[\phi]\in\dtrelcharvar$ be such that $a_i([\phi])>0$ and $a_{i-1}([\phi])>0$. Let $\ell_i(\phi)$ be the vertical geodesic ray leaving from $B_i(\phi)$. Using the definition of $\gamma_i$ we obtain
\begin{align}
\gamma_i([\phi])&=\angle (\overrightarrow{B_i(\phi)C_{i+2}(\phi)},\overrightarrow{B_i(\phi)C_{i+1}(\phi)})\nonumber \\
&= \angle (\ell_i(\phi),\overrightarrow{B_i(\phi)C_{i+1}(\phi)})-\angle (\ell_i(\phi),\overrightarrow{B_i(\phi)C_{i+2}(\phi)})\nonumber \\
&=\Gamma_{B_i(\phi)}(C_{i+1}(\phi))-\Gamma_{B_i(\phi)}(C_{i+2}(\phi)).\label{eq:gamma_i-as-Gamma-Gamma-with-B_i-C_i}
\end{align}
The second conclusion of Corollary ~\ref{cor:description-chain-triangles-alpha-non-degenerate} says that
\[
\angle (\overrightarrow{B_i(\phi)B_{i+1}(\phi)},\overrightarrow{B_i(\phi)C_{i+2}(\phi)})+\angle (\overrightarrow{B_i(\phi)C_{i+1}(\phi)},\overrightarrow{B_i(\phi)B_{i-1}(\phi)})=\beta_i/2+\pi-\beta_i/2=\pi.
\]
This implies
\begin{align}
\gamma_i([\phi])&=\pi-\angle (\overrightarrow{B_i(\phi)B_{i-1}(\phi)},\overrightarrow{B_i(\phi)B_{i+1}(\phi)})\nonumber \\
&= \pi-\big(\angle (\ell_i(\phi),\overrightarrow{B_i(\phi)B_{i+1}(\phi)})-\angle (\ell_i(\phi),\overrightarrow{B_i(\phi)B_{i-1}(\phi)})\big)\nonumber \\
&=\pi-\big(\Gamma_{B_i(\phi)}(B_{i+1}(\phi))-\Gamma_{B_i(\phi)}(B_{i-1}(\phi))\big).\label{eq:gamma_i-as-Gamma-Gamma-with-B_i-B_i}
\end{align}

\begin{lem}\label{lem:sum-gamma_i-over-chain-differentiable}
Let $j< k$ and $[\phi_0]\in\dtrelcharvar$ be such that $a_{j-1}([\phi_0])>0$, $a_{j+k}([\phi_0])>0$ and $a_i([\phi_0])=0$ for every $l=j,\ldots,j+k-1$. Then the function 
\[
\exp\big(i\cdot(\gamma_j([\phi])+\ldots+\gamma_{j+k}([\phi])\big)
\]
is continuously differentiable in a neighbourhood of $[\phi_0]$.
\end{lem}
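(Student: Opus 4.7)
The plan is to express the sum $\gamma_j+\ldots+\gamma_{j+k}$ on the open dense \emph{regular locus} of a neighborhood of $[\phi_0]$ in terms of the function $\Gamma$, telescope, and then exponentiate so as to invoke Lemma~\ref{lem:extension-exponential-Gamma-Gamma}. Locally around $[\phi_0]$ I choose representatives so that each fixed point $B_l([\phi])$ is an analytic function of $[\phi]$. Let $U$ be the subset of a neighborhood of $[\phi_0]$ on which $a_l([\phi])>0$ for every $l=j,\ldots,j+k-1$; by continuity of the $a_l$, the set $U$ is open and dense there, and on $U$ both $a_{j-1}$ and $a_{j+k}$ are also positive. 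Consequently, for every $l$ from $j$ to $j+k$ the inequalities $a_{l-1}>0$ and $a_l>0$ hold on $U$, so formula~\eqref{eq:gamma_i-as-Gamma-Gamma-with-B_i-B_i} applies and gives $\gamma_l([\phi])=\pi-\bigl(\Gamma_{B_l}(B_{l+1})-\Gamma_{B_l}(B_{l-1})\bigr)$.

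A direct regrouping of the resulting sum telescopes into
\[
\sum_{l=j}^{j+k}\gamma_l([\phi])=(k+1)\pi+\Gamma_{B_j}(B_{j-1})-\Gamma_{B_{j+k}}(B_{j+k+1})+\sum_{m=j}^{j+k-1}\bigl(\Gamma_{B_{m+1}}(B_m)-\Gamma_{B_m}(B_{m+1})\bigr),
\]
valid on $U$. Exponentiating yields
\[
\exp\!\Bigl(i\sum_{l=j}^{j+k}\gamma_l\Bigr)=(-1)^{k+1}\,e^{i\Gamma_{B_j}(B_{j-1})}\,e^{-i\Gamma_{B_{j+k}}(B_{j+k+1})}\prod_{m=j}^{j+k-1}e^{i(\Gamma_{B_{m+1}}(B_m)-\Gamma_{B_m}(B_{m+1}))}.
\]
At $[\phi_0]$ the boundary triangles $\Delta_{j-1}$ and $\Delta_{j+k}$ are non-degenerate, so $B_j\neq B_{j-1}$ and $B_{j+k}\neq B_{j+k+1}$; hence by Lemma~\ref{lem:Gamma-C1} the two single-exponential factors are continuously differentiable on a neighborhood of $[\phi_0]$. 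Each factor in the product extends to a continuously differentiable function on all of $\Hyp\times\Hyp$ by Lemma~\ref{lem:extension-exponential-Gamma-Gamma}, in particular at the diagonal points $B_m=B_{m+1}$, which is precisely where we sit at $[\phi_0]$.

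The right-hand side of the exponentiated identity is therefore continuously differentiable on a full neighborhood of $[\phi_0]$, while the left-hand side is continuous on that neighborhood by Lemma~\ref{lem:continuity}. The two functions agree on the dense subset $U$, so by continuity they agree throughout, proving that $\exp\bigl(i(\gamma_j+\ldots+\gamma_{j+k})\bigr)$ is continuously differentiable near $[\phi_0]$. The main obstacle is the discontinuity of the individual quantities $\Gamma_{B_m}(B_{m+1})$ at the diagonal $B_m=B_{m+1}$; it is circumvented precisely because the telescoping pairs each such term with its partner $\Gamma_{B_{m+1}}(B_m)$ into the antisymmetric combination that Lemma~\ref{lem:extension-exponential-Gamma-Gamma} is designed to regularize.
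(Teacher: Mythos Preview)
Your proof is correct and follows essentially the same route as the paper's: rewrite each $\gamma_l$ via \eqref{eq:gamma_i-as-Gamma-Gamma-with-B_i-B_i}, telescope to isolate the two boundary terms $\Gamma_{B_j}(B_{j-1})$ and $\Gamma_{B_{j+k}}(B_{j+k+1})$ plus the antisymmetric pairs $\Gamma_{B_{m+1}}(B_m)-\Gamma_{B_m}(B_{m+1})$, and invoke Lemma~\ref{lem:extension-exponential-Gamma-Gamma} on the latter. You are in fact slightly more careful than the paper in two respects: you make explicit that the identity from \eqref{eq:gamma_i-as-Gamma-Gamma-with-B_i-B_i} is only available on the dense regular locus $U$ and then pass to the closure by continuity, and you exponentiate before factoring so that Lemma~\ref{lem:extension-exponential-Gamma-Gamma} applies literally rather than tacitly.
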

\begin{proof}
We know from the proof of Lemma ~\ref{lem:continuity} that the function is continuous. Using \eqref{eq:gamma_i-as-Gamma-Gamma-with-B_i-B_i} we write (dropping the dependence on $\phi$)
\begin{align*}
\gamma_j+\ldots+\gamma_{j+k}&=(k+1)\pi -\sum_{i=j}^{j+k} \big(\Gamma_{B_i}(B_{i+1})-\Gamma_{B_i}(B_{i-1})\big)\\
&= (k+1)\pi +\Gamma_{B_j}(B_{j-1})-\Gamma_{B_{j+k}}(B_{j+k+1})-\sum_{i=j}^{j+k-1}\big(\Gamma_{B_i}(B_{i+1})-\Gamma_{B_{i+1}}(B_{i})\big).
\end{align*}
In a neighbourhood of $[\phi_0]$ we may assume that $a_{j-1}$ and $a_{j+k}$ are nonzero, so that $B_j\neq B_{j-1}$ and $B_{j+k}\neq B_{j+k+1}$. This means that the functions $\Gamma_{B_j}(B_{j-1})$ and $\Gamma_{B_{j+k}}(B_{j+k+1})$ are continuously differentiable around $[\phi_0]$. Lemma ~\ref{lem:extension-exponential-Gamma-Gamma} implies that every summand in the remaining sum extends to a continuously differentiable function around $[\phi_0]$. This concludes the proof of the lemma.
\end{proof}

The next step consists in expressing $\exp\big(i\cdot \Gamma_{B_{i+1}}(C_{i+2})\big)$ in terms of the coordinates of the points $B_{i}$ and $C_{i+1}$. We first need to compute the coordinates of the point $B_{i+1}$ in terms of that of $B_i$ and $C_{i+2}$. Recall that $B_{i+1}$ is the fixed point of $\phi(c_{i+2})^{-1}\phi(b_i)$. Using Proposition ~\ref{prop:elliptic-matrix-from-fixed-point-and-angle} we can write down explicitly $\phi(c_{i+2})$ and $\phi(b_i)$ in terms of the coordinates of $C_{i+2}$ and $B_i$, and the angles $\alpha_{i+2}$ and $\beta_i$. We can then compute the product $\phi(c_{i+2})^{-1}\phi(b_i)$ and deduce the coordinates of $B_{i+1}$ using formula \eqref{eq:fixed-point-elliptic}. With the help of Wolfram Mathematica, we obtain
\begin{equation*}
\resizebox{.9\hsize}{!}
{$x_{B_{i+1}}=\frac{2\left(x_{C_{i+2}}y_{B_i}\cos\left(\frac{2\pi-\beta_i}{2}\right)\sin\left(\frac{\alpha_{i+2}}{2}\right)+x_{B_i}y_{C_{i+2}}\cos\left(\frac{\alpha_{i+2}}{2}\right)\sin\left(\frac{2\pi-\beta_i}{2}\right)\right)+\sin\left(\frac{2\pi-\beta_i}{2}\right)\sin\left(\frac{\alpha_{i+2}}{2}\right)(x_{B_i}^2-x_{C_{i+2}}^2+y_{B_i}^2-y_{C_{i+2}}^2)}{2\left(y_{B_i}\cos\left(\frac{2\pi-\beta_i}{2}\right)\sin\left(\frac{\alpha_{i+2}}{2}\right)+y_{C_{i+2}}\cos\left(\frac{\alpha_{i+2}}{2}\right)\sin\left(\frac{2\pi-\beta_i}{2}\right)+\sin\left(\frac{2\pi-\beta_i}{2}\right)\sin\left(\frac{\alpha_{i+2}}{2}\right)(x_{B_i}-x_{C_{i+2}})\right)}$}
\end{equation*}
and
\begin{equation*}
\resizebox{.9\hsize}{!}
{$
x_{C_{i+2}}-x_{B_{i+1}}=\frac{-\sin\left(\frac{2\pi-\beta_i}{2}\right)\left(2y_{C_{i+2}}(x_{B_i}-x_{C_{i+2}})\cos\left(\frac{\alpha_{i+2}}{2}\right)+\sin\left(\frac{\alpha_{i+2}}{2}\right)\left((x_{B_i}-x_{C_{i+2}})^2+y_{B_i}^2-y_{C_{i+2}}^2\right)\right)}{2\left(y_{B_i}\cos\left(\frac{2\pi-\beta_i}{2}\right)\sin\left(\frac{\alpha_{i+2}}{2}\right)+y_{C_{i+2}}\cos\left(\frac{\alpha_{i+2}}{2}\right)\sin\left(\frac{2\pi-\beta_i}{2}\right)+\sin\left(\frac{2\pi-\beta_i}{2}\right)\sin\left(\frac{\alpha_{i+2}}{2}\right)(x_{B_i}-x_{C_{i+2}})\right)}.
$}
\end{equation*}
We also have
\begin{equation*}
\resizebox{.9\hsize}{!}
{$
y_{C_{i+2}}^2-y_{B_{i+1}}^2=y_{C_{i+2}}^2-\frac{4y_{B_i}^2y_{C_{i+2}}^2-\left(-2y_{B_i}y_{C_{i+2}}\cos\left(\frac{2\pi-\beta_i}{2}\right)\cos\left(\frac{\alpha_{i+2}}{2}\right)+\sin\left(\frac{2\pi-\beta_i}{2}\right)\sin\left(\frac{\alpha_{i+2}}{2}\right)\left((x_{B_i}-x_{C_{i+2}})^2+y_{B_i}^2+y_{C_{i+2}}^2\right)\right)^2}{4\left(y_{B_i}\cos\left(\frac{2\pi-\beta_i}{2}\right)\sin\left(\frac{\alpha_{i+2}}{2}\right)+y_{C_{i+2}}\cos\left(\frac{\alpha_{i+2}}{2}\right)\sin\left(\frac{2\pi-\beta_i}{2}\right)+\sin\left(\frac{2\pi-\beta_i}{2}\right)\sin\left(\frac{\alpha_{i+2}}{2}\right)(x_{B_i}-x_{C_{i+2}})\right)^2}.
$}
\end{equation*}
We apply \eqref{eq:exponential-of-function-Gamma_p} to get
\begin{equation*}
\resizebox{.9\hsize}{!}
{$
\begin{array}{ll}
e^{i\cdot \Gamma_{B_{i+1}}(C_{i+2})}=&\frac{(x_{C_{i+2}}-x_{B_{i+1}})^2+y_{C_{i+2}}^2-y_{B_{i+1}}^2-i\cdot 2y_{B_{i+1}}(x_{C_{i+2}}-x_{B_{i+1}})}{\sqrt{\left((x_{B_i}-x_{C_{i+2}})^2+(y_{B_i}-y_{C_{i+2}})^2\right)\left((x_{B_i}-x_{C_{i+2}})^2+(y_{B_i}+y_{C_{i+2}})^2\right)}} \\
&\cdot \frac{y_{B_i}\cos\left(\frac{2\pi-\beta_i}{2}\right)\sin\left(\frac{\alpha_{i+2}}{2}\right)+y_{C_{i+1}}\cos\left(\frac{\alpha_{i+2}}{2}\right)\sin\left(\frac{2\pi-\beta_i}{2}\right)+\sin\left(\frac{2\pi-\beta_i}{2}\right)\sin\left(\frac{\alpha_{i+2}}{2}\right)(x_{B_i}-x_{C_{i+2}})}{-y_{C_{i+2}}\sin\left(\frac{2\pi-\beta_i}{2}\right)}.
\end{array}
$}
\end{equation*}
The crucial observation is that the irregularity of the function $\exp\big(i\cdot \Gamma_{B_{i+1}}(C_{i+2})\big)$ comes from the presence of the term $\sqrt{(x_{B_i}-x_{C_{i+2}})^2+(y_{B_i}-y_{C_{i+2}})^2}$ in the denominator. If we compare this observation with Lemma ~\ref{lem:sqrt-a_i-divided-by-distance-analytic} we conclude
\begin{lem}\label{lem:Gamma-times-sqrt-area-differentiable}
The function
\[
[\phi]\mapsto \sqrt{a_i([\phi])}\exp\left(i\cdot \Gamma_{B_{i+1}(\phi)}(C_{i+2})(\phi)\right)
\]
is continuously differentiable on $\dtrelcharvar$.
\end{lem}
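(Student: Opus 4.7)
The plan is to read the conclusion off directly from the explicit coordinate formula for $\exp\big(i\cdot \Gamma_{B_{i+1}}(C_{i+2})\big)$ displayed just before the lemma, after multiplying it by $\sqrt{a_i}$. Inspection of that formula identifies a single obstruction to $C^1$-regularity on $\dtrelcharvar$, namely the factor
\[
\frac{1}{\sqrt{(x_{B_i}-x_{C_{i+2}})^2+(y_{B_i}-y_{C_{i+2}})^2}}
\]
in the denominator, which blows up exactly on $\{B_i=C_{i+2}\}=\{a_i=0\}$. Every other factor is continuously differentiable on all of $\dtrelcharvar$: the companion radical $\sqrt{(x_{B_i}-x_{C_{i+2}})^2+(y_{B_i}+y_{C_{i+2}})^2}$ is bounded below because $y_{B_i},y_{C_{i+2}}>0$ throughout the upper half-plane, and the remaining trigonometric denominator $-y_{C_{i+2}}\sin\big((2\pi-\beta_i)/2\big)$ is uniformly bounded away from zero since \eqref{eq:range-beta_i} confines $\beta_i$ to a compact subinterval of $(0,2\pi)$.

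The crucial algebraic observation is
\[
\frac{\sqrt{a_i([\phi])}}{\sqrt{(x_{B_i}-x_{C_{i+2}})^2+(y_{B_i}-y_{C_{i+2}})^2}}=\frac{1}{\sqrt{y_{B_i}y_{C_{i+2}}}}\cdot\sqrt{\frac{a_i}{y_{C_{i+2}}^{-1}y_{B_i}^{-1}\big((x_{C_{i+2}}-x_{B_i})^2+(y_{C_{i+2}}-y_{B_i})^2\big)}},
\]
which rewrites the offending factor as a product of two $C^1$ pieces. The first is analytic on $\Hyp\times\Hyp$ because $y_{B_i}$ and $y_{C_{i+2}}$ are strictly positive. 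The second is precisely the function appearing in \eqref{eq:square-root-area-parameter-analytic}, which by Lemma \ref{lem:sqrt-a_i-divided-by-distance-analytic} extends analytically to all of $\dtrelcharvar$.

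Multiplying the explicit formula for $\exp\big(i\cdot \Gamma_{B_{i+1}}(C_{i+2})\big)$ by $\sqrt{a_i}$ therefore yields a product of continuously differentiable functions on $\dtrelcharvar$, which is the desired conclusion. The essential (and only nontrivial) point to check is the identification of the single singular factor in the explicit formula and its cancellation by $\sqrt{a_i}$ in the precise form required by Lemma \ref{lem:sqrt-a_i-divided-by-distance-analytic}; once this is done, every remaining ingredient is manifestly smooth thanks to the positivity of $y_{B_i},y_{C_{i+2}}$ and the uniform bounds on $\beta_i$.
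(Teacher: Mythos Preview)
Your proof is correct and follows exactly the same approach as the paper: identify $\sqrt{(x_{B_i}-x_{C_{i+2}})^2+(y_{B_i}-y_{C_{i+2}})^2}$ as the unique singular factor in the explicit formula for $\exp\big(i\cdot \Gamma_{B_{i+1}}(C_{i+2})\big)$ and cancel it against $\sqrt{a_i}$ via Lemma~\ref{lem:sqrt-a_i-divided-by-distance-analytic}. The paper states this in two sentences; your version spells out the regularity of the remaining factors more carefully, but the argument is the same.
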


We are now ready to prove Lemma ~\ref{lem:key-lemma-differentiability}.

\begin{proof}[Proof of Lemma ~\ref{lem:key-lemma-differentiability}]
The first and second assertions of Lemma ~\ref{lem:key-lemma-differentiability} follow from Lemma ~\ref{lem:sum-gamma_i-over-chain-differentiable}. To prove the third assertion, first observe that (as in the proof of Lemma ~\ref{lem:sum-gamma_i-over-chain-differentiable}) $-\gamma_{i+1}([\phi])-\ldots-\gamma_{j_l+k_l}([\phi])$ can be written as
\[
-(j_l+k_l-i)\pi-\Gamma_{B_{i+1}}(B_i)+\Gamma_{B_{j_l+k_l}}(B_{j_l+k_l+1})+\sum_{m=i+1}^{j_l+k_l-1}\left(\Gamma_{B_m}(B_{m+1})-\Gamma_{B_{m+1}}(B_m)\right).
\]
The functions $\exp\big( i\cdot (\Gamma_{B_m}(B_{m+1})-\Gamma_{B_{m+1}}(B_m))\big)$ are continuously differentiable by Lemma ~\ref{lem:extension-exponential-Gamma-Gamma}. Since $a_{j_l+k_l}([\phi_0])>0$ by hypothesis, the points $B_{j_l+k_l}(\phi)$ and $B_{j_l+k_l+1}(\phi)$ are distinct for $[\phi]$ in a neighbourhood of $[\phi_0]$. Thus the function $\exp\big( i\cdot \Gamma_{B_{j_l+k_l}}(B_{j_l+k_l+1})\big)$ is continuously differentiable in the same neighbourhood of $[\phi_0]$. Now observe that
\[
\Gamma_{B_{i+1}}(B_i)=\Gamma_{B_{i+1}}(C_{i+2})+\pi-\beta_{i+1}/2.
\]
For the function $\beta_{i+1}$ is analytic on $\dtrelcharvar$, it is sufficient to prove that 
\[
\sqrt{a_i}\exp\big(i\cdot \Gamma_{B_{i+1}}(C_{i+2})\big)
\]
is continuously differentiable in a neighbourhood of $[\phi_0]$ to conclude the third assertion of Lemma ~\ref{lem:key-lemma-differentiability}. This is precisely the statement of Lemma ~\ref{lem:Gamma-times-sqrt-area-differentiable}.
\end{proof}

This finishes the proof of Proposition ~\ref{prop:differentiability}.

\subsection{Diffeomorphism property} We now prove

\begin{prop}\label{prop:diffeomorphism}
The map $\Cgot \colon \dtrelcharvar\to \CP^{n-3}$ is a diffeomorphism.
\end{prop}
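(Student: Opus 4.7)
The plan is to apply the inverse function theorem. By Propositions \ref{prop:homeomorphism} and \ref{prop:differentiability}, $\Cgot$ is already a $C^1$ bijection between compact smooth manifolds of real dimension $2(n-3)$. It therefore suffices to check that $d\Cgot_{[\phi]}$ is a linear isomorphism at every $[\phi]\in\dtrelcharvar$: the inverse function theorem then provides a local $C^1$ inverse at each point, and gluing these with the continuous global inverse $\Cgot^{-1}$ (which exists by Proposition \ref{prop:homeomorphism}) yields that $\Cgot^{-1}$ is $C^1$.

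On the open dense subset $\regdtrelcharvar$, invertibility of $d\Cgot$ is immediate. The area and angle parameters $a_1,\ldots,a_{n-3},\sigma_1,\ldots,\sigma_{n-3}$ are analytic on this subset and, modulo the relation $a_0+\ldots+a_{n-3}=\lambda$, form an analytic chart. In the toric affine chart $\{z_0\neq 0\}$ of $\CP^{n-3}$, $\Cgot$ reads
\[
(a_1,\ldots,a_{n-3},\sigma_1,\ldots,\sigma_{n-3})\longmapsto \left(\sqrt{a_1/a_0}\,e^{i\sigma_1},\ldots,\sqrt{a_{n-3}/a_0}\,e^{i\sigma_{n-3}}\right),
\]
whose differential is manifestly invertible.

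The delicate case is a point $[\phi_0]$ in the complement of $\regdtrelcharvar$, where some $a_i$ vanish and the corresponding $\sigma_i$ cease to be continuous. The plan here is to exploit equivariance. By Proposition \ref{prop:equivariance}, $\mu=\nu\circ\Cgot$ and $\Cgot$ is $\T^{n-3}$-equivariant, so $\Cgot$ restricts to a bijection between the fibre $\mu^{-1}(\mu([\phi_0]))$ and the fibre $\nu^{-1}(\nu(\Cgot([\phi_0])))$. By Lemma \ref{lem:fibres-moment-map-degenerated-triangles} and the analogous toric structure on $\CP^{n-3}$, both fibres are embedded tori of the same dimension $k$ (equal to the number of indices $i$ with $a_i([\phi_0])>0$), with identical $\T^{n-3}$-stabilizer (determined by the face of the moment simplex containing their common image). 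An equivariant bijection between two homogeneous spaces of the same torus is automatically a diffeomorphism, so $d\Cgot_{[\phi_0]}$ is already an isomorphism in the tangent direction of the orbit. Transverse to the orbit, the relation $d\mu_{[\phi_0]}=d\nu_{\Cgot([\phi_0])}\circ d\Cgot_{[\phi_0]}$ forces $d\Cgot_{[\phi_0]}$ to induce an isomorphism between the normal bundles of the two orbits, since both $d\mu$ and $d\nu$ realize these normal bundles as the tangent space to the relevant face of the simplex.

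The main obstacle is precisely this transverse-direction analysis: one needs the local toric structure near a non-regular fibre to provide a natural normal complement to the orbit in a way compatible with $\Cgot$. An alternative, staying closer to the techniques of Proposition \ref{prop:differentiability}, would be a direct computation in the chart $\{z_{i_0}\neq 0\}$ of $\CP^{n-3}$ for an index $i_0$ with $a_{i_0}([\phi_0])>0$, expressing the components of $\Cgot$ via Lemmas \ref{lem:sqrt-a_i-divided-by-distance-analytic}, \ref{lem:extension-exponential-Gamma-Gamma} and \ref{lem:Gamma-times-sqrt-area-differentiable} as smooth functions of the fixed-point coordinates $(C_i,B_i)$ on $\dtrelcharvar$, and verifying non-vanishing of the Jacobian by inspection.
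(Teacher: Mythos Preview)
Your overall strategy matches the paper's: reduce to showing $d\Cgot_{[\phi_0]}$ is an isomorphism, handle the regular locus via the explicit action-angle chart, and treat the non-regular points separately using equivariance and $\mu=\nu\circ\Cgot$. The regular-fibre argument is essentially the paper's.

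The gap is in your transverse-direction claim at a non-regular $[\phi_0]$. You assert that $d\mu$ and $d\nu$ ``realize these normal bundles as the tangent space to the relevant face of the simplex''. But the dimensions do not match: if the orbit through $[\phi_0]$ has dimension $k$, the normal space has dimension $2(n-3)-k$, whereas the face of the moment polytope has dimension $k$. The differential $d\mu_{[\phi_0]}$ has rank $k$, and its kernel (of dimension $2(n-3)-k$) strictly contains the orbit tangent space. So the relation $d\mu=d\nu\circ d\Cgot$ only controls a $k$-dimensional quotient of the tangent space; it says nothing about the remaining $2(n-3)-2k$ normal directions, and cannot by itself rule out a kernel for $d\Cgot$ there. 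You flag this as ``the main obstacle'', but the argument as written does not overcome it.

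The paper closes this gap differently. For each index $i$ with $a_i([\phi_0])=0$, it chooses a smooth path $[\phi_t]$ with $a_i([\phi_t])>0$ for $t\neq 0$ and $a_i''(0)>0$. Writing $f(t)=\Cgot_i([\phi_t])$, differentiability of $\Cgot$ gives $f\in C^1$ with $f(0)=0$, and $|f(t)|^2=a_i([\phi_t])/a_0([\phi_t])$ forces $|f'(0)|^2=a_i''(0)/(2a_0([\phi_0]))>0$; hence $(d\Cgot_i)_{[\phi_0]}\neq 0$. Then, since the $i$th circle of $\T^{n-3}$ lies in the stabilizer of $[\phi_0]$, it acts linearly on $T_{[\phi_0]}\dtrelcharvar$ (the isotropy representation) and, by equivariance, rotates the $i$th coordinate of $\C^{n-3}$. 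A nonzero rotation-invariant image in $\C$ is all of $\C$, so $(d\Cgot_i)_{[\phi_0]}$ is surjective. Combining this with the regular-case argument for the indices with $a_i([\phi_0])>0$ gives surjectivity of $d\Cgot_{[\phi_0]}$. This deformation-plus-isotropy step is the missing ingredient in your proposal.
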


\begin{proof}
Thanks to Proposition ~\ref{prop:homeomorphism} and Proposition ~\ref{prop:differentiability} we know that $\Cgot$ is a continuously differentiable bijection. To prove that $\Cgot$ is a diffeomorphism it is thus sufficient to prove that the differential of $\Cgot$ is regular at every point. 

Let $[\phi_0]\in\dtrelcharvar$ and assume for simplicity that $a_0([\phi_0])>0$. We decide to work in the chart $\{z_0\neq 0\}$ of $\CP^{n-3}$. In a slight abuse of notation, we write $\Cgot=(\Cgot_1,\ldots,\Cgot_{n-3})$ for the map
\[
[\phi]\mapsto \left(\sqrt{\frac{a_1([\phi])}{a_0([\phi])}}e^{i\sigma_1([\phi])},\ldots,\sqrt{\frac{a_{n-3}([\phi])}{a_0([\phi])}}e^{i\sigma_{n-3}([\phi])}\right)\in\C^{n-3}
\]
defined for every $[\phi]\in\dtrelcharvar$ such that $a_0([\phi])>0$. We prove that $(d\Cgot)_{[\phi_0]}$ is surjective. We distinguish two cases according to whether some area parameters of $[\phi_0]$ vanish.

First assume that $a_i([\phi_0])>0$ for every $i$. We consider the decomposition of the tangent space to $\CP^{n-3}$ at $\Cgot([\phi_0])$ as the direct sum of the kernel of the differential of the moment map $\nu$ defined in \eqref{eq:definition-moment-map-torus-action-on-CPn} and its complement $V$:
\[
T_{\Cgot([\phi_0])}\CP^{n-3}=\Ker\left( (d\nu)_{\Cgot([\phi_0])}\right)\oplus V.
\]
Both subspaces have dimension $n-3$ because $\Cgot([\phi_0])$ lies in a regular fibre of $\nu$. Since we are assuming $a_i([\phi_0])>0$ for every $i$, the exterior derivative of $\Cgot$ at $[\phi_0]$ is given in components by
\begin{equation}\label{eq:exterior-derivative-Cgot}
(d\Cgot_i)_{[\phi_0]}=e^{i\sigma_i([\phi_0])}\left(i\cdot (d\sigma_i)_{[\phi_0]}\sqrt{\frac{a_i([\phi_0])}{a_0([\phi_0])}}+\frac{a_0([\phi_0])(da_i)_{[\phi_0]
}-a_i([\phi_0])(da_0)_{[\phi_0]}}{2a_0([\phi_0])\sqrt{a_i([\phi_0])a_0([\phi_0])}}\right).
\end{equation}
Note that since the torus action on $\dtrelcharvar$ is by diffeomorphisms, we can neglect the term $e^{i\sigma_i([\phi_0])}$ appearing in \eqref{eq:exterior-derivative-Cgot}. Said differently, $d\Cgot$ is surjective at $[\phi_0]$ if and only if it is surjective at $[\phi_0']$, where $[\phi_0']$ and $[\phi_0]$ lie in the same fibre of the moment and $\sigma_i([\phi_0'])=0$ for every $i$.  

We consider a first family of smooth deformations of $[\phi_0]$ along the orbits corresponding to a fixed component of the torus action. Assume that we deform along the orbit corresponding to the $i$th component of the torus. Along that deformation all the area parameters $a_j$ are constant and the angle parameters $\sigma_j$ are constant for $j\neq i$. The image under the differential of $\Cgot$ of that deformation is generated by the complex direction of the $i$th component of $\C^{n-3}$ according to \eqref{eq:exterior-derivative-Cgot}. Moreover, the images of each such deformation lie in the kernel of $(d\nu)_{\Cgot([\phi_0])}$ by Proposition ~\ref{prop:equivariance}. Comparing dimensions, we conclude that the image of $(d\Cgot)_{[\phi_0]}$ contains the kernel of $(d\nu)_{\Cgot([\phi_0])}$.

Next we consider a second family of smooth deformations of $[\phi_0]$ corresponding to the complement $W$ of the kernel of the differential of the moment map $\mu$ defined in \eqref{eq:definiton-moment-map}: 
\[
T_{[\phi_0]}\dtrelcharvar=\Ker\left( (d\mu)_{[\phi_0]}\right)\oplus W.
\]
Proposition ~\ref{prop:equivariance} says that $\mu=\nu\circ\Cgot$ which implies $(d\Cgot)_{[\phi_0]}(W)\subset V$. Because both $(d\mu)_{[\phi_0]}$ and $(d\nu)_{\Cgot([\phi_0])}$ have maximal rank, we conclude that $(d\Cgot)_{[\phi_0]}$ maps $W$ surjectively onto $V$. This shows that the image of $(d\Cgot)_{[\phi_0]}$ contains $V$. We conclude that $(d\Cgot)_{[\phi_0]}$ is surjective.

Now, we deal with the case where $a_i([\phi_0])=0$ for some index $i$. The argument here relies on the existence of a smooth deformation $[\phi_t]$ of $[\phi_0]$ such that $a_i([\phi_t])>0$ for $t\neq 0$. The existence of such a deformation is a general property of symplectic toric manifolds (recall that $a_i$ is a multiple of the $i$th component of the moment map $\mu$). Let us abbreviate $a_i(t)\coloneqq a_i([\phi_t])$. Note that, by assumption, $a_i(0)=a_i'(0)=0$. We can choose the deformation $[\phi_t]$ to ensure that $a_i''(0)>0$. Proposition ~\ref{prop:differentiability} implies that the function 
\[
[\phi]\mapsto \sqrt{\frac{a_i([\phi])}{a_0([\phi])}}e^{i\sigma_i([\phi])}
\]
is continuously differentiable. We claim that its derivative at $[\phi_0]$ along the deformation $[\phi_t]$ is nonzero. This is the case because we assumed $a_i''(0)>0$. This means that the image of $(d\Cgot_i)_{[\phi_0]}$ inside $\C$ has real dimension at least one. However, whenever $a_i([\phi_0])=0$, there is an effective action of the $i$th component of the torus on $T_{[\phi_0]}\dtrelcharvar$. Since the torus action is by diffeomorphisms and $\Cgot$ is equivariant, we conclude that $(d\Cgot_i)_{[\phi_0]}$ is surjective. We can repeat this argument for every index $i$ such that $a_i([\phi_0])=0$. Combined with the previous case, this shows that $(d\Cgot)_{[\phi_0]}$ is surjective even when some area parameters vanish.
\end{proof}

\subsection{Symplectomorphism property} Finally, we prove that

\begin{prop}\label{prop:symplectomorphism}
The map $\Cgot \colon \dtrelcharvar\to \CP^{n-3}$ is a symplectomorphism, i.e.\
\[
\lambda\cdot\Cgot^\ast \omega_\FS=\omega_\G.
\]
\end{prop}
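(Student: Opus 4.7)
The plan is to verify $\lambda\cdot\Cgot^\star\omega_\FS=\omega_\G$ first on the open dense regular stratum $\regdtrelcharvar$ and then extend by continuity. Both 2-forms are continuous on $\dtrelcharvar$: $\omega_\G$ is analytic, and $\Cgot^\star\omega_\FS$ inherits continuity from Proposition \ref{prop:differentiability}. By Proposition \ref{prop:equivariance}, both forms are $\T^{n-3}$-invariant and make the torus action Hamiltonian with the same moment map $\mu$, so the equality reduces to matching their expressions in the local action-angle coordinates $(\mu_i,\sigma_i)_{i=1}^{n-3}$ available on $\regdtrelcharvar$.

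For the Fubini-Study side, a direct chart computation suffices. The image $\Cgot(\regdtrelcharvar)$ lies inside the Darboux chart $\{z_i\neq 0 \text{ for every } i\}\subset \CP^{n-3}$ on which $\omega_\FS=\sum d\nu_i\wedge d\theta_i$, where $\theta_i=\arg(z_i/z_0)$. Since $\nu_i\circ\Cgot=\mu_i$ (Proposition \ref{prop:equivariance}) and $\theta_i\circ\Cgot=\sigma_i$ directly from the definition \eqref{eq:definition_Cgot} (on a continuous branch of $\arg$), I obtain
\[
\Cgot^\star\omega_\FS = \sum_{i=1}^{n-3} d\mu_i\wedge d\sigma_i \qquad \text{on } \regdtrelcharvar.
\]

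For the Goldman side, the explicit torus action on chains of triangles recorded in \eqref{eq:image-of-C_i-torus-action}--\eqref{eq:image-of-B_i-torus-action} shows that the $i$-th infinitesimal generator $\tilde X_i$ of the $\T^{n-3}$-action satisfies $\tilde X_i(\mu_j)=0$ and $\tilde X_i(\sigma_j)=-\delta_{ij}$, so $\tilde X_i=-\partial/\partial\sigma_i$ in the coordinates $(\mu_i,\sigma_i)$. Combined with the Hamiltonian identity $\iota_{\tilde X_i}\omega_\G=\lambda\, d\mu_i$ and torus-invariance of $\omega_\G$, this pins down all components of $\omega_\G$ except possibly a correction in the $d\mu_i\wedge d\mu_j$ directions, yielding
\[
\omega_\G = \lambda\sum_{i=1}^{n-3} d\mu_i\wedge d\sigma_i + \lambda\sum_{i<j}B_{ij}(\mu)\, d\mu_i\wedge d\mu_j
\]
on the regular stratum, with functions $B_{ij}$ depending only on the action variables.

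The heart of the proposition — and its main obstacle — is to show that the correction terms $B_{ij}$ vanish. This is equivalent to the Poisson bracket identities $\{\sigma_i,\sigma_j\}_\G=0$ on $\regdtrelcharvar$ (a direct computation in the action-angle coordinates shows these brackets are proportional to $B_{ij}$), which is the precise analogue, in our setting, of Wolpert's classical observation that twist parameters along disjoint simple closed curves Poisson-commute. The vanishing may be verified directly from Goldman's intersection formula for the Poisson bracket of invariant functions, exploiting that the angles $\gamma_i=\sigma_i-\sigma_{i-1}$ are built from rotations about the fixed points of the images of the pairwise-disjoint curves $b_1,\ldots,b_{n-3}$ in the pants decomposition. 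Once $B_{ij}\equiv 0$ is established on $\regdtrelcharvar$, the continuity of both 2-forms extends the equality $\Cgot^\star\omega_\FS=(1/\lambda)\omega_\G$ from the regular stratum to all of $\dtrelcharvar$.
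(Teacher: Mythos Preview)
Your reduction to the vanishing of the correction terms $B_{ij}(\mu)$, equivalently to $\{\sigma_i,\sigma_j\}_\G=0$ on $\regdtrelcharvar$, is correct and is indeed the heart of the matter. The gap is in how you propose to verify it. Goldman's product/intersection formula computes Poisson brackets of \emph{invariant functions} of the form $[\phi]\mapsto F(\phi(c))$ for a single curve $c$ and a conjugation-invariant $F$. The angle parameters $\gamma_i$ are not of this type: each $\gamma_i$ is the angle at $B_i(\phi)$ between the rays toward $C_{i+1}(\phi)$ and $C_{i+2}(\phi)$, hence a function of the fixed points of three different evaluations $\phi(b_i),\phi(c_{i+1}),\phi(c_{i+2})$ simultaneously. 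Disjointness of the curves $b_1,\ldots,b_{n-3}$ therefore does not directly yield $\{\gamma_i,\gamma_j\}_\G=0$ via Goldman's formula. Your appeal to the Wolpert analogy is also circular in this paper's logic: the Wolpert-type identity (Corollary~\ref{cor:analogue-Wolpert-formula}) is derived \emph{from} Proposition~\ref{prop:symplectomorphism}, not used as an input to it. As written, the key step is asserted rather than proved.

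The paper's route is different and avoids any Poisson-bracket computation with the specific coordinates $\sigma_i$. It isolates a general lemma (Lemma~\ref{lem:property-rigidity-troic-manifold}): on a compact connected manifold with an effective half-dimensional torus action, two symplectic forms for which that action is Hamiltonian with the \emph{same} moment map must coincide. The proof applies Arnold--Liouville to each form separately, producing angle coordinates $\varphi_i$ for $\omega_1$ and $\psi_i$ for $\omega_2$ with $\omega_1=\sum d\mu_i\wedge d\varphi_i$ and $\omega_2=\sum d\mu_i\wedge d\psi_i$, and then compares them through the common moment-map condition $\omega_1(\Theta,\cdot)=d\langle\mu,\theta\rangle=\omega_2(\Theta,\cdot)$. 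Your obstruction $B_{ij}$ is precisely what the paper's step ``$d\varphi_i=d\psi_i$'' is meant to rule out; the difference is that the paper places the burden on the abstract existence of Arnold--Liouville angles for $\omega_\G$ (so that $\omega_\G$ already has the Darboux shape $\sum d\mu_i\wedge d\varphi_i$ on $\mathring M$), rather than on verifying that the concrete $\sigma_i$ themselves are Darboux. If you want to salvage the direct approach, you would need either to carry out the bracket computation explicitly (e.g.\ by expressing $\gamma_i$ through trace coordinates and applying Goldman's formula term by term), or to first invoke Arnold--Liouville for $\omega_\G$ and then compare its abstract angles to the $\sigma_i$.
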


To prove Proposition ~\ref{prop:symplectomorphism}, we need the following result about symplectic toric manifolds. The result is folklore; a proof is included for completeness.

\begin{lem}\label{lem:property-rigidity-troic-manifold}
Let $M$ be a compact connected smooth manifold of dimension $2m$. Assume that $M$ is equipped with an effective action of an $m$-dimensional torus. Let $\omega_1$ and $\omega_2$ be two symplectic forms on $M$ for which the torus action is Hamiltonian with respect to the same moment map $\mu\colon M\to \R^m$. Then $\omega_1=\omega_2$.
\end{lem}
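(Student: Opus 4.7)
The plan is to show that the difference $\eta:=\omega_1-\omega_2$ vanishes identically. The starting observation is that $\eta$ is a closed torus-invariant 2-form (both $\omega_j$ are, because a Hamiltonian action preserves its symplectic form), and from the moment-map identity $\iota_{X_i}\omega_j=d\mu_i$ applied to both $j=1,2$ we get $\iota_{X_i}\eta=0$ for every fundamental vector field $X_i$ of the torus action. Thus $\eta$ is a closed basic 2-form for the torus action.

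Next, I restrict attention to the open dense subset $M^\circ\subseteq M$ on which the torus acts freely. There $\mu$ is a submersion with fibres the $T^m$-orbits, and Arnold--Liouville provides local action-angle coordinates $(\mu_1,\ldots,\mu_m,\theta_1,\ldots,\theta_m)$ in which each fundamental vector field equals $\partial/\partial\theta_i$. In such coordinates both forms have the shape $\omega_j=\sum d\mu_i\wedge d\theta_i^{(j)}$, and because $\partial/\partial\theta_i^{(1)}=\partial/\partial\theta_i^{(2)}=X_i$ the two angle coordinates can differ only by functions of $\mu$. Therefore $\eta$ is pulled back via $\mu$ from a closed 2-form $\bar\eta$ defined on the interior of the moment polytope $\Delta:=\mu(M)$.

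The third, crucial local step is to analyse $\eta$ near each fixed point $p$ of the torus (these exist by compactness of $M$). At such a point the linearised $T^m$-action decomposes $T_pM$ into two-dimensional weight spaces $V_1\oplus\cdots\oplus V_m$; a torus-invariant symplectic form on $V_j$ is a nonzero scalar multiple of the standard area form, and comparing the Hessian of $\mu_i$ at $p$ with the weights of the representation (which depend only on the torus action, not on $\omega$) pins this scalar down uniquely. Consequently $\omega_{1,p}=\omega_{2,p}$ as bilinear forms on $T_pM$. The equivariant symplectic tubular neighbourhood theorem, applied with the additional normalisation that the resulting chart preserves $\mu$ and has identity differential at $p$, can then be used to upgrade this pointwise equality to the identity $\omega_1=\omega_2$ on a full neighbourhood of $p$. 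In particular $\bar\eta$ vanishes near every vertex of $\Delta$.

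The main obstacle is the final propagation step: turning the local vanishing near each vertex into global vanishing on $\Delta^\circ$. The plan is to run the same equivariant symplectic neighbourhood argument at every orbit stratum of $M$, working up the face lattice of $\Delta$: at points whose isotropy is a codimension-$k$ subtorus the local normal form is again forced by $\mu$ and the torus representation, which constrains $\bar\eta$ near the corresponding codimension-$k$ face; iterating through higher-codimension strata together with the simple Delzant combinatorics of $\Delta$ extends the vanishing along the closed face lattice until $\bar\eta$ is seen to vanish on all of $\Delta^\circ$. Combined with the density of $M^\circ$ and smoothness of $\eta$, this yields $\eta=0$ on $M^\circ$ and hence $\omega_1=\omega_2$ on all of $M$ by continuity. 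The hard part is genuinely this last propagation, since closed basic 2-forms on $\Delta^\circ$ need not satisfy unique continuation on their own, so one must really use the rigidity imposed by the smooth extension of $\eta$ across every stratum of $M$.
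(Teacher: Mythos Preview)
Your first two steps are correct: $\eta=\omega_1-\omega_2$ is a closed basic $2$-form, so on the free locus $M^\circ$ it descends via $\mu$ to a closed $2$-form $\bar\eta$ on the interior of the moment polytope. The genuine gap is in step~3. The equivariant Darboux theorem only tells you that $(M,\omega_1)$ and $(M,\omega_2)$ are locally equivariantly \emph{symplectomorphic} near a fixed point $p$; it does not say that $\omega_1=\omega_2$ on a neighbourhood of $p$. Even with your extra normalisations (chart preserving $\mu$, identity differential at $p$), the Darboux chart for $\omega_1$ and the one for $\omega_2$ need not coincide, so you cannot pass from local equivalence to local equality. Once step~3 fails, the propagation in step~4 has nothing to propagate.

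In fact the lemma, read literally, is false. Take $M=S^2\times S^2$ with the standard $T^2$-action by rotations and height moment map $\mu=(\mu_1,\mu_2)$. Let $\omega_0$ be the product of the standard area forms and set $\omega'=\omega_0+d\mu_1\wedge d\mu_2$. The extra term is globally smooth (each $d\mu_i$ is a smooth $1$-form on $M$), closed, and contributes nothing to the top power since $\omega_0\wedge d\mu_1\wedge d\mu_2=0$; hence $\omega'$ is symplectic. Moreover $\iota_{\partial_{\theta_k}}(d\mu_1\wedge d\mu_2)=0$, so $\iota_{\partial_{\theta_k}}\omega'=\iota_{\partial_{\theta_k}}\omega_0$ and the torus action is Hamiltonian for $\omega'$ with the \emph{same} moment map. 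Yet $\omega'\neq\omega_0$. Note that at every fixed point $p$ one has $d\mu_i|_p=0$ and hence $\omega'_p=(\omega_0)_p$, so your pointwise conclusion in step~3 survives while the neighbourhood upgrade does not.

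The paper's own argument runs into the same obstruction from a different direction: it writes $\omega_1=\sum d\mu_i\wedge d\varphi_i$ and $\omega_2=\sum d\mu_i\wedge d\psi_i$ in action--angle coordinates and asserts $d\varphi_i=d\psi_i$. But angle coordinates for a fixed torus action are determined only up to adding functions of $\mu$, so one only gets $d\psi_i-d\varphi_i=\sum_j c_{ij}(\mu)\,d\mu_j$, and the antisymmetric part of $(c_{ij})$ is precisely your $\bar\eta$. Neither approach closes this gap; what \emph{is} true (and suffices for the paper's application via Delzant) is that $\omega_1$ and $\omega_2$ are equivariantly symplectomorphic by a $\mu$-preserving diffeomorphism, which is weaker than $\omega_1=\omega_2$.
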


\begin{proof}
Let $\mathring M$ denote the preimage of the interior of the moment polytope. This is an open and dense subset of $M$. It is thus sufficient to check that $\omega_1=\omega_2$ on $\mathring M$. The Arnold-Liouville Theorem states (see e.g.\ \cite{Can01}) the existence of angle coordinates $(\varphi_1,\ldots,\varphi_m)$ and $(\psi_1,\ldots,\psi_m)$ defined on $\mathring M$ such that 
\[
\omega_1\restriction_{\mathring M}=\sum_{i=1}^m d\mu_i\wedge d\varphi_i,\quad \omega_2\restriction_{\mathring M}=\sum_{i=1}^m d\mu_i\wedge d\psi_i,
\]
where $\mu=(\mu_1,\ldots,\mu_m)$ is the moment map.

We are assuming that the torus action on $M$ is Hamiltonian with moment map $\mu$ for both $\omega_1$ and $\omega_2$. So, for any $\theta\in \R^m$, if $\Theta$ denotes the vector field on $M$  defined by the infinitesimal action of $\theta$, then $\omega_1(\Theta,\cdot)=d\langle \mu,\theta\rangle=\omega_2(\Theta,\cdot)$. By letting $\theta$ range over the standard basis of $\R^m$, we observe that $d\varphi_i=d\psi_i$ must hold for every $i$. Hence $\omega_1\restriction_{\mathring M}=\omega_2\restriction_{\mathring M}$. This concludes the proof of the lemma.
\end{proof}

\begin{proof}[Proof of Proposition ~\ref{prop:symplectomorphism}]
We want to apply Lemma ~\ref{lem:property-rigidity-troic-manifold} for the torus action \eqref{eq:torus-action} on $\dtrelcharvar$. This action is Hamiltonian with respect to the symplectic form $\omega_\G$ and the moment map $\mu$ defined in \eqref{eq:definiton-moment-map}. Propostition ~\ref{prop:diffeomorphism} says that $\Cgot$ is a diffeomorphism. So, $\lambda\cdot \Cgot^\ast \omega_\FS$ is another symplectic form on $\dtrelcharvar$. Proposition ~\ref{prop:equivariance} implies that the torus action \eqref{eq:torus-action} on $\dtrelcharvar$ is Hamiltonian with respect to the symplectic form $\lambda\cdot \Cgot^\ast \omega_\FS$ and the moment map $\mu$. Hence, by Lemma ~\ref{lem:property-rigidity-troic-manifold}, $\lambda\cdot \Cgot^\ast \omega_\FS=\omega_\G$.
\end{proof}

Propositions ~\ref{prop:equivariance} and ~\ref{prop:symplectomorphism} together prove Theorem ~\ref{thm:symplectomorphism}.

\section{Appendix}
\addcontentsline{toc}{section}{Appendix}
\renewcommand{\thesubsection}{\Alph{subsection}}

\subsection{Conjugacy classes inside $\psl$}\label{appendix-PSL2R}

The Lie group $G=\psl$ is the center-free quotient of the group SL$(2,\R)$ of real $2\times 2$ matrices with determinant $1$ by $\{\pm I\}$, where $I$ is the identity $2\times 2$ matrix. It can be identified with the group of orientation-preserving isometries of the upper half-plane $\Hyp=\{z\in \C: \Im(z)>0\}$. It acts on $\Hyp$ by Möbius transformations
\[
\pm \begin{pmatrix}
a & b\\
c & d
\end{pmatrix}\cdot z\coloneqq \frac{az+b}{cz+d}.
\]
The open subspace of $G$ consisting of elements whose trace in absolute value is smaller than 2 is called the subspace of \emph{elliptic elements} of $G$. It is denoted by $\E\subset G$. Equivalently, an element of $G$ is elliptic if and only if it has a unique fixed point in $\Hyp$.

\begin{lem}\label{lem:c-neq-0-for-elliptic}
If $
A=\pm\begin{pmatrix}
a & b\\
c & d
\end{pmatrix}
$
is elliptic, then $b\neq 0$ and $c\neq 0$.
\end{lem}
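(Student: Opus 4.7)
The plan is to argue by contradiction using the characterization of ellipticity via the trace. Recall from the statement preceding the lemma that $A \in G$ is elliptic if and only if $|\trace(A)| < 2$.

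Suppose towards a contradiction that $b = 0$ or $c = 0$. Since $\det A = ad - bc = 1$, vanishing of either off-diagonal entry forces $ad = 1$. In particular $a \neq 0$ and $d = 1/a$, so
\[
\trace(A) = a + \frac{1}{a}.
\]
By the elementary inequality $|t + 1/t| \geq 2$ for every nonzero real $t$ (immediate from $(|t| - 1)^2 \geq 0$ after dividing by $|t|$), we conclude $|\trace(A)| \geq 2$. This contradicts the ellipticity of $A$, so both $b$ and $c$ must be nonzero.

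The argument is essentially a one-line trace computation, and I expect no real obstacle. If one prefers a geometric argument, the case $c = 0$ could alternatively be dismissed by noting that $A$ then acts on $\Hyp$ as the affine map $z \mapsto a^2 z + ab$, which fixes $\infty$ and, if $a^2 \neq 1$, a real point $ab/(1 - a^2) \in \partial \Hyp$; in either case $A$ has no fixed point inside $\Hyp$, contradicting ellipticity. The case $b = 0$ is analogous with $A$ fixing $0 \in \partial \Hyp$. However the trace computation above handles both cases uniformly and is the shortest proof.
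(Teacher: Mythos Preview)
Your proof is correct and follows essentially the same approach as the paper: both observe that vanishing of an off-diagonal entry forces $ad=1$ and then deduce $|\trace(A)|\geq 2$ (the paper writes this as $(a+d)^2\geq 4ad=4$, which is the same inequality you use). Your additional geometric remark is a nice alternative but not needed.
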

\begin{proof}
If $b=0$ or $c=0$, then $\det (A)=ad=1$. So, $\tr (A)^2=(a+d)^2\geq 4ad=4$ and $A$ is not elliptic.
\end{proof}

Let $A=\pm\begin{pmatrix}
a & b\\
c & d
\end{pmatrix}$ be an elliptic element. We denote the unique fixed point of $A$ in $\Hyp$ by $\fix(A)$. It defines a map $\fix\colon \E\to \Hyp$. 

\begin{lem}
The unique fixed point of $A$ is 
\begin{equation}\label{eq:fixed-point-elliptic}
\fix (A)=\frac{a-d}{2c}+i\cdot \frac{\sqrt{4-(a+d)^2}}{2\vert c\vert},
\end{equation}
and the map $\fix\colon \E\to \Hyp$ is analytic.
\end{lem}
\begin{proof}
The first assertion is a straightforward computation. Since $c\neq 0$ by Lemma ~\ref{lem:c-neq-0-for-elliptic}, the map $\fix\colon \E\to \Hyp$ is analytic. 
\end{proof}

The elliptic elements that fix the complex unit $i\in \Hyp$ are of the form
\[
\rot_{\vartheta} \coloneqq 
 \pm\begin{pmatrix}
  \cos(\vartheta/2) & \sin(\vartheta/2)  \\
  -\sin(\vartheta/2) & \cos(\vartheta/2)  
 \end{pmatrix}
\]
for $\vartheta\in (0,2\pi)$. Every $A\in \E$ is conjugate to a unique $\rot_{\vartheta(A)}$. This defines a function $\vartheta\colon \E\to (0,2\pi)$. The number $\vartheta(A)\in (0,2\pi)$ is called the (anti-clockwise) \emph{angle of rotation} of $A$. \begin{lem}\label{lem:angle-of-rotation-elliptic}
The angle of rotation of $A$ is
\begin{equation}\label{eq:angle-of-rotation-elliptic}
\vartheta(A)=\arctan\left(\frac{-c}{\vert c\vert}\cdot\frac{a+d}{(a+d)^2-2}\sqrt{4-(a+d)^2}\right)+\varepsilon(A),
\end{equation}
where
\begin{equation*}
\varepsilon(A)\coloneqq\left\{\begin{array}{ll}
0, &\text{if } (a+d)^2>2 \text{ and } (a+d)\frac{-c}{\vert c\vert}>0,\\
\pi, &\text{if } (a+d)^2<2, \\
2\pi, &\text{if } (a+d)^2>2 \text{ and } (a+d)\frac{-c}{\vert c\vert}<0.\\
\end{array}\right.
\end{equation*}
Moreover, the function $\vartheta\colon \E\to (0,2\pi)$ is analytic.
\end{lem}
\begin{proof}
The number $\vartheta(A)$ can be computed as the complex argument of the complex number
\begin{equation}\label{eq:complex-number-defining-angle-of-rotation}
\frac{dA}{dz}\bigg\rvert_{z=\fix A}=\left(\frac{(a+d)^2}{2}-1\right)-i\cdot (a+d)\frac{c}{\vert c\vert}\frac{\sqrt{4-(a+d)^2}}{2}.
\end{equation}
Observe that the imaginary part of \eqref{eq:complex-number-defining-angle-of-rotation} vanishes if and only if $a+d=0$, in which case its real part is equal to $-1$. This means that the complex number defined by \eqref{eq:complex-number-defining-angle-of-rotation} takes values inside $\C\setminus \R_{\geq 0}$. If we think of the complex argument of a number inside $\C\setminus \R_{\geq 0}$ as a function $\C\setminus \R_{\geq 0}\to (0,2\pi)$, then it is analytic. This shows that $\vartheta\colon \E\to (0,2\pi)$ is an analytic function. 
\end{proof}

\begin{prop}\label{prop:elliptic-matrix-from-fixed-point-and-angle}
The map
\[
(\fix,\vartheta)\colon \E\to \Hyp\times (0,2\pi)
\]
is an analytic diffeomorphism that identifies the subset of elliptic elements in $\psl$ with an open ball.
\end{prop}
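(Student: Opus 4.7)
The plan is to exhibit an explicit analytic two-sided inverse. Analyticity of both components of $(\fix,\vartheta)$ has already been established in the discussion preceding the proposition via the formulae \eqref{eq:fixed-point-elliptic} and \eqref{eq:angle-of-rotation-elliptic}. Since $\E$ is an open subset of the 3-dimensional Lie group $G$ and $\Hyp\times(0,2\pi)$ is likewise 3-dimensional, constructing an analytic inverse will complete the proof.

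For every $z=x+iy\in\Hyp$, the affine map $w\mapsto yw+x$ is an orientation-preserving isometry of $\Hyp$ sending $i$ to $z$, represented by
\[
g_z:=\pm\begin{pmatrix}\sqrt{y} & x/\sqrt{y}\\ 0 & 1/\sqrt{y}\end{pmatrix}\in G,
\]
whose entries depend analytically on $z$. Define
\[
\sigma\colon \Hyp\times(0,2\pi)\longrightarrow \E,\quad (z,\vartheta)\mapsto g_z\rot_\vartheta g_z^{-1}.
\]
This map is analytic because matrix multiplication and inversion are analytic on $G$, and $\vartheta\mapsto\rot_\vartheta$ is analytic by its explicit matrix form.

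It remains to check that $\sigma$ is a two-sided inverse of $(\fix,\vartheta)$. The element $\sigma(z,\vartheta)$ is conjugate to $\rot_\vartheta$, hence elliptic with angle of rotation $\vartheta$, and its unique fixed point is $g_z\cdot i=z$; thus $(\fix,\vartheta)\circ\sigma=\Id$. Conversely, given $A\in\E$ with $\fix(A)=z$ and $\vartheta(A)=\vartheta$, the element $g_z^{-1}Ag_z$ fixes $i$ and therefore lies in the stabilizer of $i$, which consists precisely of the rotations $\{\rot_\eta:\eta\in[0,2\pi)\}$. Since the angle of rotation is invariant under conjugation, $g_z^{-1}Ag_z$ has angle of rotation $\vartheta$, forcing $g_z^{-1}Ag_z=\rot_\vartheta$ and hence $A=\sigma(z,\vartheta)$.

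No serious obstacle arises. The only point that requires a little care is that $g_z$ must be chosen to depend \emph{analytically} on $z$ for $\sigma$ to be analytic; any other isometry sending $i$ to $z$ would differ from $g_z$ by a rotation about $i$, which commutes with $\rot_\vartheta$, so the value $\sigma(z,\vartheta)$ is independent of the choice, but only a globally analytic section like the one above automatically delivers the analytic structure of the inverse.
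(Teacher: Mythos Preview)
Your proof is correct and follows essentially the same approach as the paper: both exhibit the explicit analytic inverse $(z,\vartheta)\mapsto \rot_\vartheta(z)$ and verify it is a two-sided inverse. The only cosmetic difference is that the paper writes out the matrix entries of $\rot_\vartheta(z)$ explicitly (as in \eqref{eq:elliptic-matrix-given-angle-and-fixed-point}) and checks the identities by direct substitution into \eqref{eq:fixed-point-elliptic} and \eqref{eq:complex-number-defining-angle-of-rotation}, whereas you express the same element as the conjugate $g_z\rot_\vartheta g_z^{-1}$ and argue more conceptually; multiplying out your $g_z\rot_\vartheta g_z^{-1}$ recovers precisely the paper's formula.
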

\begin{proof}
We explained above that the map $(\fix,\vartheta)$ is analytic. The inverse map sends a point $z=x+i\cdot y\in \Hyp$ and an angle $\vartheta \in (0,2\pi)$ to the elliptic element
\begin{equation}\label{eq:elliptic-matrix-given-angle-and-fixed-point}
\rot_\vartheta (z)= \pm\begin{pmatrix}
  \cos(\vartheta/2)-xy^{-1}\sin (\vartheta/2) & (x^2y^{-1}+y)\sin(\vartheta/2)  \\
  -y^{-1}\sin(\vartheta/2) & \cos(\vartheta/2)+xy^{-1}\sin (\vartheta/2)
 \end{pmatrix}.
\end{equation}
Indeed, an immediate computation gives
\begin{align*}
\fix(\rot_\vartheta (z))&=\frac{-2xy^{-1}\sin(\vartheta/2)}{-2y^{-1}\sin(\vartheta/2)}+i\cdot\frac{2\sin(\vartheta/2)}{2y^{-1}\sin(\vartheta/2)}\\&=x+iy,
\end{align*}
and
\begin{align*}
\vartheta(\rot_\vartheta (z))&=\arg \left(\left(\frac{4\cos(\vartheta/2)^2}{2}-1\right)-i\cdot(2\cos(\vartheta/2))\cdot (-1)\cdot \frac{2\sin(\vartheta/2)}{2}\right)\\&=\arg(\cos(\vartheta)+i\sin(\vartheta))\\&=\vartheta.
\end{align*}
\end{proof}

The elements of $\psl$ whose trace in absolute value is equal to 2 are called \emph{parabolic}. Parabolic elements are those that have a unique fixed point of the boundary of $\Hyp$. There are two conjugacy classes of parabolic elements represented by
\begin{equation}\label{eq:parabolic-PSL(2,R)}
\parab^+\coloneqq \pm\begin{pmatrix}
1 & 1\\
0 & 1
\end{pmatrix} \quad\text{and}\quad \parab^-\coloneqq\pm\begin{pmatrix}
1 & 0\\
1 & 1
\end{pmatrix}.
\end{equation}
The elements conjugate to $\parab^+$ are called \emph{positively parabolic} and those conjugate to $\parab^-$ \emph{negatively parabolic}. Each conjugacy class of parabolic elements is an open annulus whose closures intersect at the identity.

The elements of $\psl$ with a trace larger than 2 in absolute value are called \emph{hyperbolic}. Hyperbolic elements have precisely two fixed points on the boundary of $\Hyp$. Any hyperbolic element of $\psl$ is conjugate to
\[
\hyp_\lambda\coloneqq \pm\begin{pmatrix}
\lambda & 0\\
0 & \lambda^{-1}
\end{pmatrix},
\]
for a unique $\lambda > 0$. Hyperbolic conjugacy classes are open annuli.

Elliptic, parabolic and hyperbolic conjugacy classes foliate $\psl$ in a way that is illustrated on Figure ~\ref{fig:PSL(2,R)}.

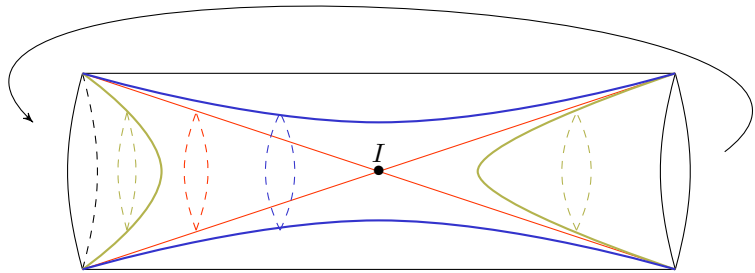
\begin{figure}[h]
\begin{center}
\begin{tikzpicture}[scale=1.3,>=stealth',font=\sffamily,decoration={
    markings,
    mark=at position 0.999 with {\arrow{>}}}]
 \draw (0,0) to (6,0);
  \draw (0,2) to (6,2);
 \draw(0,0) to[bend left=15]  (0,2);
 \draw[dashed] (0,0) to[bend right=15] (0,2);
 \draw(6,0) to[bend left=15]  (6,2);
 \draw (6,0) to[bend right=15] (6,2);
 
 \draw[red!80!yellow] (0,0) to (6,2);
 \draw[red!80!yellow] (0,2) to (6,0);
 
 \draw (3,1) node[above]{$I$};
 \draw (3,1) node{$\bullet$};
 
 \draw[thick, blue!30!yellow] plot [smooth, tension=.6] coordinates {(6,0) (4,1) (6,2)};
 \draw[dashed,blue!30!yellow] (5,0.4) to[bend left=25]  (5,1.6);
 \draw[dashed,blue!30!yellow] (5,0.4) to[bend right=25]  (5,1.6);
 
 \draw[thick, blue!30!yellow] plot [smooth, tension=.8] coordinates {(0,0) (0.8,1) (0,2)};
 \draw[dashed,blue!30!yellow] (.45,0.4) to[bend left=15]  (.45,1.6);
 \draw[dashed,blue!30!yellow] (.45,0.4) to[bend right=15]  (.45,1.6);
 
 \draw[dashed, red!80!yellow] (1.15,0.4) to[bend left=20]  (1.15,1.6);
 \draw[dashed, red!80!yellow] (1.15,0.4) to[bend right=20]  (1.15,1.6);
 
 \draw[thick, blue!80!yellow] plot [smooth, tension=.8] coordinates {(0,2) (3,1.5) (6,2)};
 \draw[thick, blue!80!yellow] plot [smooth, tension=.8] coordinates {(0,0) (3,.5) (6,0)};
 \draw[dashed, blue!80!yellow] (2,0.4) to[bend left=25]  (2,1.6);
 \draw[dashed, blue!80!yellow] (2,0.4) to[bend right=25]  (2,1.6);

 \draw[postaction={decorate}] (6.5,1.2) to[bend left=220] (-.5,1.5);	
 
\end{tikzpicture}
\caption{The elliptic conjugacy classes are drawn in green. They foliate an open ball into disks. The open ball is bounded by the two parabolic conjugacy classes which have the shape of two red cones joined at the identity. The hyperbolic conjugacy classes foliate an open solid torus, bounded by the red cones, into blue annuli.}\label{fig:PSL(2,R)}
\end{center}
\end{figure}

\subsection{Pants decompositions of punctured spheres}\label{appendix:pants-decomposition}

The purpose of this appendix is to bring some clarity on a particular step of the construction introduced in this paper. It was said in introduction that the coordinates we produce depend on the choice of a pants decomposition of the oriented punctured sphere $\Sigma_n$. In multiple occasions, we studied the image of a curve of the pants decomposition under a representation $\phi\colon \pi_1(\Sigma_n)\to \psl$. This is an abuse of language, since isotopy classes of simple closed curves have multiple lifts inside $\pi_1(\Sigma_n)$ (that differ by conjugation). This distinction is irrelevant as long as we are only interested in the angle of rotation of the image, assuming the image is elliptic, because it is conjugacy invariant. It is however essential if one is interested, for instance, in the fixed point of the image. Lemma \ref{lem:main-lemma-pair-of-pants} below shows how one can coherently chose lifts of the pants curves inside $\pi_1(\Sigma_n)$.

It is convenient to work with surfaces with boundaries instead of punctures. Replacing each puncture of $\Sigma_n$ by a boundary component we obtain an oriented sphere $\widehat \Sigma_n$ with boundary. The boundary components of $\widehat \Sigma_n$ are labelled $\gamma_1,\ldots,\gamma_n$, accordingly with the labels of the punctures of $\Sigma_n$, and are given the induced orientation. Note that $\Sigma_n$ identifies homeomorphically with $\widehat \Sigma_n\setminus \partial \widehat\Sigma_n$. Let $\Mod (\Sigma_n)$ denote the \emph{pure mapping class group} of $\Sigma_n$, that is the group of isotopy classes of orientation-preserving homeomorphisms of $\Sigma_n$ that fix each puncture individually. Dropping the restriction to orientation-preserving homeomorphisms, we obtain a larger mapping class group denoted $\Mod^\pm(\Sigma_n)$.

A \emph{pants decomposition} of $\widehat\Sigma_n$ consists of a maximal collection of isotopy classes of pairwise disjoint simple closed curves on $\widehat\Sigma_n$. The $n$ isotopy classes of the boundary curves $\gamma_i$ are elements of any pants decomposition. The cardinality of a pants decomposition is always $2n-3$. The $n-3$ isotopy classes of curves that are not boundary curves are said to be \emph{interior}. We denote by $\mathbb P$ the collection of all pants decompositions of $\widehat\Sigma_n$. The action of $\Mod (\Sigma_n)$ on the set of free isotopy classes of simple closed curves induces an action on $\mathbb P$.

To every pants decomposition $\mathcal P\in \mathbb P$ we can associate a graph $\Gamma(\mathcal P)$ on $2n-2$ vertices defined as follows. There are two types of vertices: $n$ empty vertices, one for each boundary curve of $\widehat\Sigma_n$, and $n-2$ full vertices, one for each pair of pants determined by $\mathcal P$. Two full vertices are connected by an edge if they share a common boundary. An empty vertex is connected to a full vertex by an edge if the boundary curve corresponding to the empty vertex bounds the pair of pants corresponding to the full vertex. An example of such a graph is given by Figure ~\ref{fig:pants-graph}. Let $\Gamma \coloneqq \{\Gamma(\mathcal P):\mathcal P\in\mathbb P\}$ be the set of all such graphs. Clearly, $\Gamma$ is a finite set. 

\begin{figure}
\begin{tikzpicture}[tqft/flow=north,tqft/cobordism/.style={draw}, tqft/every boundary component/.style={draw}]
\node[tqft/pair of pants,draw] (a) {};
\node[tqft/reverse pair of pants,draw,anchor=incoming boundary 1] (b) at (a.outgoing boundary 2) {};
\node[tqft/pair of pants,draw,anchor=incoming boundary 1] (c) at (a.outgoing boundary 1) {};
\node[tqft/reverse pair of pants,draw,anchor=outgoing boundary 1] (d) at (b.incoming boundary 2) {};
\node[tqft/pair of pants,draw,anchor=incoming boundary 1] (e) at (b.outgoing boundary 1) {};
\end{tikzpicture}
\begin{tikzpicture}
\draw (3,0)--(3,4/3);
\draw (5,0)--(6,4/3);
\draw (7,0)--(6,4/3);
\draw (5,8/3)--(6,4/3);
\draw (3,4/3)--(5,8/3);
\draw (3,4/3)--(2,8/3);
\draw (1,4)--(2,8/3);
\draw (3,4)--(2,8/3);
\draw (5,14/3)--(5,8/3);
\draw (5,14/3)--(4,6);
\draw (5,14/3)--(6,6);

\draw (3,0) node{$\circ$};
\draw (5,0) node{$\circ$};
\draw (7,0) node{$\circ$};
\draw (1,4) node{$\circ$};
\draw (3,4) node{$\circ$};
\draw (4,6) node{$\circ$};
\draw (6,6) node{$\circ$};

\draw (3,4/3) node{$\bullet$};
\draw (2,8/3) node{$\bullet$};
\draw (6,4/3) node{$\bullet$};
\draw (5,8/3) node{$\bullet$};
\draw (5,14/3) node{$\bullet$};
\end{tikzpicture}
\caption{On the left: a pants decomposition of a 7-punctured sphere. On the right: the corresponding graph.}\label{fig:pants-graph}
\end{figure}
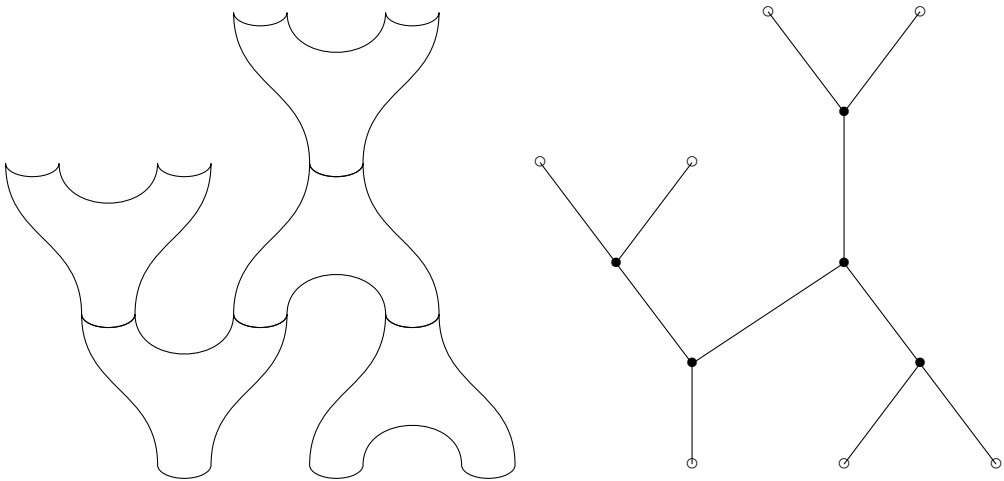

\begin{lem}[see e.g.\ \cite{Ute}]
There is a bijection between $\Gamma$ and the orbit space $\mathbb P/\Mod(\Sigma_n)$. In particular, there are only finitely many pants decomposition of $\widehat \Sigma_n$ up to mapping class group action. The stabilizer of a pants decomposition $\mathcal P\in\mathbb P$ is the subgroup of $\Mod(\Sigma_n)$ generated by Dehn twists along the interior curves of $\mathcal P$, hence isomorphic to $\Z^{n-3}$.
\end{lem}

Let $\pi_1(\Sigma_n)$ denote the fundamental group of $\Sigma_n$. The inclusion $\Sigma_n\cong \widehat \Sigma_n\setminus\partial \widehat \Sigma_n\subset \widehat \Sigma_n$ induces a group isomorphism $\pi_1(\Sigma_n)\cong \pi_1(\widehat\Sigma_n)$. Let $\Aut(\pi_1(\Sigma_n))$ denote the group of automorphisms of $\pi_1(\Sigma_n)$. It contains the subgroup $\Aut^\ast(\pi_1(\Sigma_n))$ of automorphisms that preserve the set of conjugacy classes of simple closed curves surrounding punctures, see \cite[§8]{FaMa12} for more details. Let $\Out^\ast(\pi_1(\Sigma_n))\coloneqq \Aut^\ast(\pi_1(\Sigma_n))/\Inn(\pi_1(\Sigma_n))$ be the corresponding subgroup of the group of outer automorphisms of $\pi_1(\Sigma_n)$.

\begin{thm}[Dehn-Nielsen-Baer]
The mapping class group $\Mod^\pm(\Sigma_n)$ is isomorphic to the group $\Out^\ast(\pi_1(\Sigma_n))$.
\end{thm}

The set of conjugacy classes in $\pi_1(\Sigma_n)$ is denoted by $\pi_1(\Sigma_n)/\pi_1(\Sigma_n)$. Forgetting the basepoint gives a bijection between $\pi_1(\Sigma_n)/\pi_1(\Sigma_n)$ and the set of free isotopy classes of closed curves in $\widehat \Sigma_n$, which we denote by $[S^1,\widehat \Sigma_n]$. Let $\pi\colon \pi_1(\Sigma_n)\to [S^1,\widehat\Sigma_n]$ denote the induced projection. If $\alpha\subset \widehat\Sigma_n$ is a closed curve, then we write $[\alpha]\in [S^1,\widehat\Sigma_n]$ for its free isotopy class.

The main claim of this appendix is the following.

\begin{lem}\label{lem:main-lemma-pair-of-pants}
Let $\mathcal P=\{[\gamma_1],\ldots,[\gamma_n],[\beta_1],\ldots,[\beta_{n-3}]\}\in \mathbb P$ be a pants decomposition of $\widehat \Sigma_n$. There exist representatives $c_i\in \pi^{-1}([\gamma_i])$ and $b_j\in \pi^{-1}([\beta_j])$ such that
\begin{itemize}
\item there exists a permutation $\sigma\in S_n$ for which $\prod_{i=1}^n c_{\sigma(i)}=1$,
\item for every pair of pants $P$ determined by $\mathcal P$, if $[\alpha_1],[\alpha_2], [\alpha_3]\in \mathcal P$ denote the boundary curves of $P$, and $a_1,a_2,a_3\in \{c_1,\ldots,c_n,b_1,\ldots,b_{n-3}\}$ the corresponding representatives, then the cyclic product of $a_1,a_2$, and $a_3$ is trivial (up to maybe considering their inverses for compatible orientation with the orientation of $P$).
\end{itemize}
\end{lem}

An immediate corollary of Lemma ~\ref{lem:main-lemma-pair-of-pants} is that the inclusion
\[
\langle c_1,\ldots,c_n:\prod_{i=1}^n c_{\sigma(i)}=1\rangle \subset \pi_1(\Sigma_n)
\]
is a group isomorphism. Obviously, the representatives $c_i,b_i$ of Lemma ~\ref{lem:main-lemma-pair-of-pants} are not unique; Two different choices differ by an element of $\Aut^\ast(\pi_1(\Sigma_n))$.

\begin{proof}[Proof of Lemma ~\ref{lem:main-lemma-pair-of-pants}.]
We proceed by induction on $n$. In the case $n=3$, since $\Mod(\Sigma_3)$ is trivial, it holds that $\Aut^\ast(\pi_1(\Sigma_3))=\Inn(\pi_1(\Sigma_3))$. So any choice of presentation $\pi_1(\Sigma_3)=\langle c_1,c_2,c_3:c_1c_2c_3=1\rangle$ with $\pi(c_i)=[\gamma_i]$ will do.

Now, assume that the result holds for every $k\leq n$ with $n\geq 4$. Let $\Gamma(\mathcal P)$ be the graph of $\mathcal P$. There exists a full vertex connected to two empty vertices. This corresponds to a pair of pants with boundary curves, say, $[\beta_k]$, $[\gamma_a]$, and $[\gamma_b]$. Cutting $\Sigma_n$ along $\beta_k$ gives a decomposition $\Sigma_n=\Sigma_{n-1}\sqcup_{\beta_k} \Sigma_3$. The graph of $\mathcal P$ decomposes accordingly, see Figure ~\ref{fig:pants-graph-decomposition}.

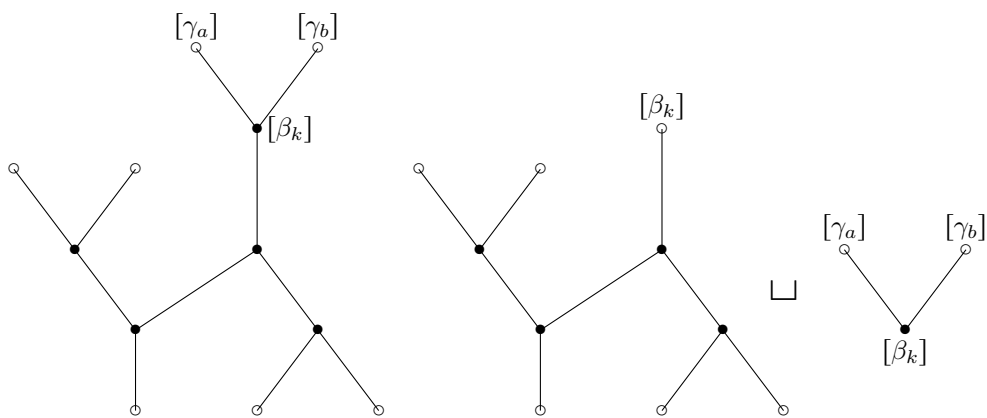
\begin{figure}
\begin{tikzpicture}[scale=.8]
\draw (3,0)--(3,4/3);
\draw (5,0)--(6,4/3);
\draw (7,0)--(6,4/3);
\draw (5,8/3)--(6,4/3);
\draw (3,4/3)--(5,8/3);
\draw (3,4/3)--(2,8/3);
\draw (1,4)--(2,8/3);
\draw (3,4)--(2,8/3);
\draw (5,14/3)--(5,8/3);
\draw (5,14/3)--(4,6);
\draw (5,14/3)--(6,6);

\draw (3,0) node{$\circ$};
\draw (5,0) node{$\circ$};
\draw (7,0) node{$\circ$};
\draw (1,4) node{$\circ$};
\draw (3,4) node{$\circ$};
\draw (4,6) node{$\circ$} node[above]{$[\gamma_a]$};
\draw (6,6) node{$\circ$} node[above]{$[\gamma_b]$};

\draw (3,4/3) node{$\bullet$};
\draw (2,8/3) node{$\bullet$};
\draw (6,4/3) node{$\bullet$};
\draw (5,8/3) node{$\bullet$};
\draw (5,14/3) node{$\bullet$} node[right]{$[\beta_k]$};
\end{tikzpicture}
\begin{tikzpicture}[scale=.8]
\draw (3,0)--(3,4/3);
\draw (5,0)--(6,4/3);
\draw (7,0)--(6,4/3);
\draw (5,8/3)--(6,4/3);
\draw (3,4/3)--(5,8/3);
\draw (3,4/3)--(2,8/3);
\draw (1,4)--(2,8/3);
\draw (3,4)--(2,8/3);
\draw (5,14/3)--(5,8/3);

\draw (3,0) node{$\circ$};
\draw (5,0) node{$\circ$};
\draw (7,0) node{$\circ$};
\draw (1,4) node{$\circ$};
\draw (3,4) node{$\circ$};

\draw (3,4/3) node{$\bullet$};
\draw (2,8/3) node{$\bullet$};
\draw (6,4/3) node{$\bullet$};
\draw (5,8/3) node{$\bullet$};
\draw (5,14/3) node{$\circ$} node[above]{$[\beta_k]$};

\draw (7,2) node{\huge $\sqcup$};

\draw (9,4/3) node{$\bullet$} node[below]{$[\beta_k]$};
\draw (8,8/3) node{$\circ$} node[above]{$[\gamma_a]$};
\draw (10,8/3) node{$\circ$} node[above]{$[\gamma_b]$};

\draw (9,4/3)--(8,8/3);
\draw (9,4/3)--(10,8/3);

\end{tikzpicture}
\caption{On the left: the initial graph. On the right: its decomposition into two graphs corresponding to the sub-surfaces.}\label{fig:pants-graph-decomposition}
\end{figure}

By Van Kampen's Theorem, $\pi_1(\Sigma_n)$ is the free product of $\pi_1(\Sigma_{n-1})$ with $\pi_1(\Sigma_3)$ amalgated over the fundamental group of $\beta_k= \Sigma_{n-1}\cap \Sigma_3$. Let $\pi_1(\Sigma_{n-1})\subset \pi_1(\Sigma_n)$ and $\pi_1(\Sigma_3)\subset \pi_1(\Sigma_n)$ be the two induced inclusions. We apply the induction hypothesis to the two sub-surfaces $\Sigma_{n-1}$ and $\Sigma_3$, and their pants decomposition induced from $\mathcal P$. This gives $c_1,\ldots,c_n\in \pi_1(\Sigma_n)$, $b_1,\ldots,b_{n-3}\in \pi_1(\Sigma_n)$ and $b_k'\in \pi_1(\Sigma_n)$ such that
\begin{itemize}
\item $c_i\in \pi^{-1}([\gamma_i])$, for $i=1,\ldots,n$, $b_j\in \pi^{-1}([\beta_j])$, for $j=1,\ldots,n-3$, and $b_k'\in \pi^{-1}([\beta_k])$,
\item there exist a permutation $\sigma \in S_{n-2}$ of the numbers $\{1,\ldots,n\}\setminus \{a,b\}$ and a permutation $\tau \in S_2$ of the numbers $\{a,b\}$ with 
\[
\prod_{i\neq a,b}c_{\sigma(i)}\cdot b_k=1 \quad \text{and} \quad c_{\tau(a)}c_{\tau(b)}(b_k')^{-1}=1.
\]
\end{itemize}

Since $\pi(b_k)=\pi(b_k')$, there exists $x\in\pi_1(\Sigma_n)$ such that $b_k=xb_k'x^{-1}$.  Hence
\[
\prod_{i\neq a,b}c_{\sigma(i)}\cdot xc_{\tau(a)}x^{-1}\cdot xc_{\tau(b)}x^{-1}=1 .
\]
Note that $xc_{\tau(a)}x^{-1}\in \pi^{-1}([\gamma_a])$ and $xc_{\tau(b)}x^{-1}\in \pi^{-1}([\gamma_b])$. Let $\overline{\sigma}\in S_n$ be the permutation of the numbers $\{1,\ldots,n\}$ given by $\overline{\sigma}(n-1)=\tau(a)$, $\overline{\sigma}(n)=\tau(b)$, and, for $i=1,\ldots, n-2$, $\overline{\sigma}(i)=\sigma(e(i))$, where $e\colon \{1,\ldots,n-2\}\to \{1,\ldots,n\}\setminus \{a,b\}$ is the increasing bijection. Let further $\overline c_i\coloneqq c_i$ for $i\neq a,b$, $\overline c_a\coloneqq xc_{a}x^{-1}$, and $\overline c_b\coloneqq xc_{b}x^{-1}$. Similarly, let $\overline b_j\coloneqq b_j$ for $j\neq k$ and $\overline b_k\coloneqq b_k'=x^{-1}b_kx$. It is easy to verify that the elements $\overline c_i, \overline b_j\in \pi_1(\Sigma_n)$ and the permutation $\overline \sigma\in S_n$ satisfy the conclusion of the lemma.
\end{proof}

\bibliographystyle{amsalpha}
\bibliography{literature-bis}

\end{document}